\title[Vanishing and Estimation results for Hodge numbers]{Vanishing and Estimation results for Hodge numbers}
\author{Peter Petersen and Matthias Wink}
\address{Department of Mathematics, UCLA, 520 Portola Plaza, Los Angeles, CA, 90095}
\email{petersen@math.ucla.edu}
\email{wink@math.ucla.edu}
\subjclass[2010]{32Q10, 32Q15, 32Q20, 53C21, 53C55}
\begin{document}
\newcommand{\Ext}
{\bigwedge\nolimits}
\newcommand{\Hol} {\operatorname{Hol}}
\newcommand{\diam} {\operatorname{diam}}
\newcommand{\Scal} {\operatorname{Scal}}
\newcommand{\scal} {\operatorname{scal}}
\newcommand{\Ric} {\operatorname{Ric}}
\newcommand{\Hess} {\operatorname{Hess}}
\newcommand{\grad} {\operatorname{grad}}
\newcommand{\Sect} {\operatorname{Sect}}
\newcommand{\Rm} {\operatorname{Rm}}
\newcommand{ \Rmzero } {\mathring{\Rm}}
\newcommand{\Rc} {\operatorname{Rc}}
\newcommand{\Curv} {S_{B}^{2}\left( \mathfrak{so}(n) \right) }
\newcommand{ \tr } {\operatorname{tr}}
\newcommand{ \id } {\operatorname{id}}
\newcommand{ \Riczero } {\mathring{\Ric}}
\newcommand{ \ad } {\operatorname{ad}}
\newcommand{ \Ad } {\operatorname{Ad}}
\newcommand{ \dist } {\operatorname{dist}}
\newcommand{ \rank } {\operatorname{rank}}
\newcommand{\Vol}{\operatorname{Vol}}
\newcommand{\dVol}{\operatorname{dVol}}
\newcommand{ \zitieren }[1]{ \hspace{-3mm} \cite{#1}}
\newcommand{ \pr }{\operatorname{pr}}
\newcommand{\diag}{\operatorname{diag}}
\newcommand{\Lagr}{\mathcal{L}}
\newcommand{\av}{\operatorname{av}}
\newcommand{ \floor }[1]{ \lfloor #1 \rfloor }
\newcommand{ \ceil }[1]{ \lceil #1 \rceil }

\newtheorem{theorem}{Theorem}[section]
\newtheorem{definition}[theorem]{Definition}
\newtheorem{example}[theorem]{Example}
\newtheorem{remark}[theorem]{Remark}
\newtheorem{lemma}[theorem]{Lemma}
\newtheorem{proposition}[theorem]{Proposition}
\newtheorem{corollary}[theorem]{Corollary}
\newtheorem{assumption}[theorem]{Assumption}
\newtheorem{acknowledgment}[theorem]{Acknowledgment}
\newtheorem{DefAndLemma}[theorem]{Definition and lemma}

\newenvironment{remarkroman}{\begin{remark} \normalfont }{\end{remark}}
\newenvironment{exampleroman}{\begin{example} \normalfont }{\end{example}}

\newcommand{\R}{\mathbb{R}}
\newcommand{\N}{\mathbb{N}}
\newcommand{\Z}{\mathbb{Z}}
\newcommand{\Q}{\mathbb{Q}}
\newcommand{\C}{\mathbb{C}}
\newcommand{\F}{\mathbb{F}}
\newcommand{\X}{\mathcal{X}}
\newcommand{\D}{\mathcal{D}}
\newcommand{\Cont}{\mathcal{C}}

\renewcommand{\labelenumi}{(\alph{enumi})}
\newtheorem{maintheorem}{Theorem}[]
\renewcommand*{\themaintheorem}{\Alph{maintheorem}}
\newtheorem*{theorem*}{Theorem}
\newtheorem*{corollary*}{Corollary}
\newtheorem*{remark*}{Remark}
\newtheorem*{example*}{Example}
\newtheorem*{question*}{Question}

\begin{abstract}
We show that compact K\"ahler manifolds have the rational cohomology ring of complex projective space provided a weighted sum of the lowest three eigenvalues of the K\"ahler curvature operator is positive. This follows from a more general vanishing and estimation theorem for the individual Hodge numbers.

We also prove an analogue of Tachibana's theorem for K\"ahler manifolds.
\end{abstract}

\maketitle

\section*{Introduction}

A major topic in geometry is the question how curvature conditions restrict the topology of the manifold. In the case of K\"ahler manifolds, vanishing results for harmonic forms imply restrictions on the Hodge numbers. This principle goes back to Bochner \cite{BochnerVectorFieldsAndRic}, who proved that compact K\"ahler manifolds with positive Ricci curvature cannot admit non-vanishing holomorphic $p$-forms, i.e. $h^{p,0} = 0$ for $1 \leq p \leq n,$ where $n$ denotes the complex dimension of the manifold. In fact, Bochner proved that if the Ricci curvature is $k$-positive, i.e. if the sum of the lowest $k$ eigenvalues of the Ricci tensor is positive, then $h^{p,0}=0$ for $k \leq p \leq n.$ In particular, $h^{n,0}=0$ provided the scalar curvature is positive. Similar results have been obtained by Greene-Wu \cite{GreeneWuCurvatureAndComplexAnalysis} in the non-compact case and by Kobayashi-Wu \cite{KobayashiWuHolomorphicSectionsOfHermitianVB} in the case of compact Hermitian manifolds. 

X. Yang \cite{YangRCpositivity} proved that compact K\"ahler manifolds with positive holomorphic sectional curvature also satisfy $h^{p,0}=0$ for $1 \leq p \leq n$, hence they are projective and moreover rationally connected. This settled one of Yau's problems \cite[Problem 47]{YauProblems}.

Ni-Zheng \cite{NiZhengPositivityAndKodaira} generalized Yang's result by similarly showing that $h^{p,0}=0$ for $k \leq p \leq n$ for compact K\"ahler manifolds with $k$-positive scalar curvature. For $k=1$ this condition reduces to positive holomorphic sectional curvature, whereas for $k=n$ it is positive scalar curvature. In particular, Ni-Zheng show that compact K\"ahler manifolds with $2$-positive scalar curvature cannot admit non-vanishing holomorphic $2$-forms and hence are projective. In previous work, Ni-Zheng \cite{NiZhengComparisonAndVanishingKaehler} similarly proved that K\"ahler manifolds with positive orthogonal Ricci curvature satisfy $h^{2,0}=0$, hence are projective.

The Bochner technique has also been used to control the second de Rham cohomology of compact K\"ahler manifolds, e.g. Bishop-Goldberg \cite{BishopGoldbergCohomKaehler} proved that $b_2(M) =1$ provided $M$ has positive bisectional curvature. In fact, in this case $M$ is biholomorphic to $\mathbb{CP}^n$ according to the solution of the Frankel conjecture due to Mori \cite{MoriProjectiveManifoldsWithAmpleTangentBundles} and Siu-Yau \cite{SiuYauCompactKaehlerPosBiholCurv}. In \cite{ChenSunTianKaehlerRicciSolitons} Chen-Sun-Tian gave an independent proof using the K\"ahler-Ricci flow. Moreover, it follows from the work of Chen \cite{ChenPosOrthBisectCurv} and Gu-Zhang \cite{GuZhangExtensionMokTheorem} that the K\"ahler-Ricci flow evolves metrics with positive orthogonal bisectional curvature into metrics with positive bisectional curvature. In \cite{WilkingALieAlgebriaicApproach} Wilking provided a different proof of this result. As an intermediate step, he used the Bochner technique to show that K\"ahler manifolds with positive orthogonal bisectional curvature satisfy $b_2(M)=1.$

In this paper we offer a different application of the Bochner technique to K\"ahler manifolds. Our methods imply vanishing results for all Hodge numbers $h^{p,q}$ for $1 \leq p,  q \leq n.$

Recall that the curvature operator of the underlying Riemannian manifold $(M,g)$ vanishes on the orthogonal complement of the holonomy algebra $\mathfrak{u}(n) \subset \mathfrak{so}(2n)$. It is therefore natural to study the induced {\em K\"ahler curvature operator} $\mathfrak{R} \colon \mathfrak{u}(n) \to \mathfrak{u}(n)$ with corresponding eigenvalues $\lambda_1 \leq \ldots \leq \lambda_{n^2}$.

Our first main theorem is

\begin{maintheorem}
Let $(M,g)$ be a compact connected K\"ahler manifold of complex dimension $n$. 

If 
\begin{align*}
\lambda_1 + \lambda_2 + \left(1 - \frac{2}{n} \right) \lambda_3 >0,
\end{align*}
then $(M,g)$ has the rational cohomology ring of $\mathbb{CP}^n.$
\label{MainVanishingTheorem}
\end{maintheorem}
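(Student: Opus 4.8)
The plan is to deduce Theorem~\ref{MainVanishingTheorem} from a sharper vanishing and estimation statement for the individual Hodge numbers, proved by the Bochner technique adapted to the K\"ahler setting. The starting point is the Bochner formula: for a harmonic $(p,q)$-form $\alpha$ on a compact K\"ahler manifold,
\[
0 \;=\; \int_M \langle \Delta \alpha, \alpha \rangle \;=\; \int_M |\nabla \alpha|^2 \;+\; \int_M \langle \mathfrak{q}(\mathfrak{R})\alpha, \alpha \rangle ,
\]
where $\mathfrak{q}(\mathfrak{R})$ is the Weitzenb\"ock curvature operator. K\"ahler geometry is the natural setting because, the holonomy being contained in $U(n)$, the Riemannian curvature operator is supported on $\mathfrak{u}(n)$, so $\mathfrak{q}(\mathfrak{R})$ is completely determined by the K\"ahler curvature operator $\mathfrak{R}\colon \mathfrak{u}(n)\to\mathfrak{u}(n)$ and its spectrum $\lambda_1\le\dots\le\lambda_{n^2}$ enters the estimate directly.

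The technical core is a pointwise eigenvalue estimate: on the space of \emph{primitive} $(p,q)$-forms, with $(p,q)\neq(0,0)$, the operator $\mathfrak{q}(\mathfrak{R})$ is bounded below by $c_{p,q}\left(\lambda_1+\lambda_2+\left(1-\tfrac2n\right)\lambda_3\right)\cdot\mathrm{id}$ for some constant $c_{p,q}>0$. This is the K\"ahler analogue of the eigenvalue estimates we developed for the Riemannian Bochner technique; the new difficulty is that the bigrading forces a finer analysis of the $\mathfrak{u}(n)$-action on $\Lambda^{p,q}$, and obtaining a lower bound involving only the three smallest eigenvalues of $\mathfrak{R}$ with the sharp weights $1,1,1-\tfrac2n$ is where the real work lies; this is the main obstacle. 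Granting it, if $\lambda_1+\lambda_2+(1-\tfrac2n)\lambda_3>0$ then $\mathfrak{q}(\mathfrak{R})$ is positive definite on every nontrivial primitive summand, so the Bochner formula forces every harmonic $(p,q)$-form with $p\neq q$ to vanish and every primitive harmonic $(p,p)$-form with $p\geq 1$ to vanish. Consequently $h^{p,q}(M)=0$ for $p\neq q$, and by the Lefschetz decomposition $H^{p,p}(M)=\C\cdot[\omega]^p$ for $0\le p\le n$.

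It remains to read off the ring structure. Since $M$ is K\"ahler, $\int_M[\omega]^n>0$, so $[\omega]^p\neq 0$ for $0\le p\le n$; together with the preceding paragraph this gives $H^{2p}(M;\Q)=\Q\cdot[\omega]^p$ and $H^{\mathrm{odd}}(M;\Q)=0$, whence $H^*(M;\Q)\cong\Q[x]/(x^{n+1})$ with $\deg x=2$, which is the rational cohomology ring of $\mathbb{CP}^n$. One bookkeeping point is implicit above: the positivity conditions emerging from the eigenvalue estimate differ a priori between bidegrees, so one must verify that the single inequality of the theorem implies all of them --- a monotonicity argument in $(p,q)$ whose extremal case, which dictates the weight $1-\tfrac2n$, is that of primitive $(1,1)$-forms, i.e. the assertion $b_2(M)=1$.
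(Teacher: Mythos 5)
Your overall strategy --- Bochner technique, reduction to the $\mathfrak{u}(n)$-action on the $U(n)$-irreducible (primitive) pieces of $\Ext^{p,q}$, and a monotonicity argument over bidegrees, with the ring structure then read off from $H^{p,p}=\C\cdot[\omega]^p$ --- is exactly the paper's. Two concrete points in your outline are wrong, however, and the second is a genuine gap.

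First, the extremal bidegree is misidentified. On the paper's normalization the relevant constant on primitive $(p,q)$-forms is $C^{p,q}=n+1-\frac{p^2+q^2}{p+q}$, so $C^{1,1}=n$: the sharp estimate on primitive $(1,1)$-forms requires positivity of $\lambda_1+\cdots+\lambda_n$, an $n$-eigenvalue condition (this is what gives $b_2=1$), not a three-eigenvalue condition. The bidegree that dictates the fractional weight $1-\tfrac{2}{n}$ is $(p,q)=(n-1,1)$, for which $C^{n-1,1}=3-\tfrac{2}{n}$; it is $h^{n-1,1}=0$, not $b_2(M)=1$, that sits at the margin of the hypothesis.

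Second, the claimed uniform pointwise bound $\mathfrak{q}(\mathfrak{R})\geq c_{p,q}\left(\lambda_1+\lambda_2+\left(1-\tfrac{2}{n}\right)\lambda_3\right)\cdot\id$ on every nonzero primitive summand is false: for $(p,q)=(n,0)$ and $(n-1,0)$ one has $C^{n,0}=1$ and $C^{n-1,0}=2$, both strictly less than $3-\tfrac{2}{n}$ once $n>2$, so the monotonicity runs the wrong way and the Lichnerowicz estimate coming from the three-eigenvalue hypothesis gives no control on these modules. The paper treats $h^{n,0}$ and $h^{n-1,0}$ by a separate mechanism: the hypothesis forces $\lambda_3\geq 0$ (otherwise all of $\lambda_1,\lambda_2,\lambda_3$ would be negative and the weighted sum could not be positive), hence $\lambda_1+\cdots+\lambda_n>0$, i.e.\ the K\"ahler curvature operator is $n$-positive, which implies $\Ric>0$, and then Bochner's classical vanishing gives $h^{n,0}=h^{n-1,0}=0$. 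This auxiliary Ricci-positivity step is essential and absent from your outline; without it the argument does not close at the top of the Hodge diamond.
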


Notice that K\"ahler manifolds with $2$-positive K\"ahler curvature operator have positive orthogonal bisectional curvature, and thus are biholomorphic to $\mathbb{CP}^n.$ In particular,  Theorem \ref{MainVanishingTheorem} is known in dimension $n=2.$

Already in dimension $n=2$ similar positivity conditions on the lowest three eigenvalues do not imply that the manifold has positive orthogonal bisectional curvature. In example \ref{EigenvalueConditionIndependentForOrthBisectCurv}, we exhibit for every $\varepsilon > 0$  an algebraic K\"ahler curvature operator $\mathfrak{R} \colon \mathfrak{u}(2) \to \mathfrak{u}(2)$ which does not have positive orthogonal bisectional curvature while its eigenvalues satisfy $\lambda_1 + \lambda_2 < 0$ and $\lambda_1 + \lambda_2 + \varepsilon \lambda_3 >0.$ Moreover, $\mathfrak{R}$ can be chosen to be Einstein. \vspace{2mm}

Theorem \ref{MainVanishingTheorem} follows from a more refined vanishing result for the individual Hodge numbers $h^{p,q}.$ Due to Serre duality, we may assume that $p+q \leq n$ and define
\begin{align*}
C^{p,q} = n+1 - \frac{p^2+q^2}{p+q}. 
\end{align*} 
Notice that $C^{p,p}= n+1-p$ and if $p \geq q$ then $C^{p,q} \geq C^{p+1,q-1}$. We will use the above convention throughout the paper.

\begin{maintheorem}
Let $(M,g)$ be a compact connected K\"ahler manifold of complex dimension $n$.

If 
\begin{align*}
\lambda_1 + \ldots + \lambda_{n+1-p} >0,
\end{align*}
 then $h^{p,p}=1$.

Suppose that $p \neq q.$ If 
\begin{align*}
\lambda_1 + \ldots + \lambda_{\floor{C^{p,q}}} + \left( C^{p,q} - \floor{C^{p,q}} \right) \cdot  \lambda_{\floor{C^{p,q}}+1} >0,
\end{align*}
then $h^{p,q}=0$.

In particular, if $\lambda_1 + \ldots + \lambda_{\floor{C^{p,q}}} > 0,$ then $h^{p,q} = 0.$
\label{VanishingHodgeNumbers}
\end{maintheorem}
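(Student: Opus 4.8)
The proof should be a Bochner–Weitzenböck argument applied to harmonic $(p,q)$-forms. Since $M$ is compact Kähler, Hodge theory identifies $h^{p,q}$ with the dimension of the space of harmonic $(p,q)$-forms, which by the Kähler identities coincides with the harmonic forms for $\tfrac12\Delta_d$. The Weitzenböck formula writes $\tfrac12\Delta_d = \nabla^*\nabla + \mathcal{K}$, where $\mathcal{K}$ is a curvature term acting pointwise on $(p,q)$-forms. The whole game is to show that the Kähler curvature assumption forces $\mathcal{K}$ to be positive semidefinite on $(p,q)$-forms (with a one-dimensional kernel in the $(p,p)$ case coming from powers of the Kähler form), so that a harmonic form is parallel, and then positivity somewhere kills it unless it is (a multiple of) $\omega^p$.

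**Main steps.**
\begin{enumerate}
\item \emph{Algebraic reduction.} Fix a point and work on the fiber $\Lambda^{p,q}T^*_xM$. I would expand the curvature term $\mathcal{K}$ in terms of the Kähler curvature operator $\mathfrak{R}\colon\mathfrak{u}(n)\to\mathfrak{u}(n)$, using that $\mathfrak{u}(n)$ acts on $\Lambda^{p,q}$ and that the Weitzenböck curvature operator is, up to the standard combinatorial factors, $\sum \mathfrak{R}(\xi_a)\cdot\xi_a\cdot$ for an orthonormal basis $\{\xi_a\}$ of $\mathfrak{u}(n)$ diagonalizing $\mathfrak{R}$. This is the representation-theoretic heart: one needs a sharp lower bound for the eigenvalues of the operator $\theta\mapsto\xi\cdot\xi\cdot\theta$ on $(p,q)$-forms, i.e. a Casimir-type estimate.
\item \emph{The eigenvalue weight $C^{p,q}$.} The number $C^{p,q}=n+1-\tfrac{p^2+q^2}{p+q}$ (and $C^{p,p}=n+1-p$) must emerge precisely as the relevant normalized Casimir constant: one shows that on primitive $(p,q)$-forms the Weitzenböck term is bounded below by $c_{p,q}\cdot(\lambda_1+\cdots)$ where the effective number of eigenvalues one is allowed to use is $\lfloor C^{p,q}\rfloor$ plus the fractional remainder times $\lambda_{\lfloor C^{p,q}\rfloor+1}$. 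The key inequality is of the form: if $\mathfrak{R}$ has eigenvalues $\lambda_1\le\cdots\le\lambda_{n^2}$ and the weighted sum in the hypothesis is positive, then $\langle\mathcal{K}\theta,\theta\rangle>0$ for every nonzero $\theta$ in the relevant space. For $(p,p)$-forms one only gets positive semidefiniteness with kernel spanned by $\omega^p$, since $\omega$ is parallel and $\mathfrak{u}(n)$-invariant.
\item \emph{Bochner conclusion.} Integrate the Weitzenböck formula: $0=\int_M\langle\tfrac12\Delta_d\theta,\theta\rangle = \int_M|\nabla\theta|^2 + \int_M\langle\mathcal{K}\theta,\theta\rangle$. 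For $p\ne q$, pointwise positivity of $\mathcal{K}$ (strict, by the hypothesis) forces $\theta\equiv 0$, giving $h^{p,q}=0$. For $p=q$, $\nabla\theta=0$ and $\langle\mathcal{K}\theta,\theta\rangle\equiv0$ force $\theta$ to be parallel and pointwise in the kernel of $\mathcal{K}$; since $M$ is connected and the kernel is the line through $\omega^p$, $\theta$ is a constant multiple of $\omega^p$, so $h^{p,p}=1$.
\end{enumerate}

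**Main obstacle.** The difficulty is entirely in Step 1–2: obtaining the \emph{sharp} lower bound for the curvature term in terms of only the smallest $\approx C^{p,q}$ eigenvalues of $\mathfrak{R}$, with the correct constant $C^{p,q}$ and the correct fractional-weight refinement. This requires decomposing $\Lambda^{p,q}$ into $\mathfrak{u}(n)$-irreducibles (or at least using the Lefschetz decomposition into primitive pieces and reducing to primitive $(p,q)$-forms), computing how $\mathfrak{so}(n)$-type "two-form acts twice" operators behave, and carefully optimizing: given that $\mathfrak{R}$ is traceless-free only in the sense of being a symmetric operator with prescribed ordered eigenvalues, one must show the worst case for $\langle\mathcal{K}\theta,\theta\rangle$ is controlled by spreading weight onto the top eigenvalue, which produces the $\lfloor C^{p,q}\rfloor + \{C^{p,q}\}\lambda_{\lfloor C^{p,q}\rfloor+1}$ shape. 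I expect this to parallel the authors' earlier real Bochner technique work, with the Kähler structure improving the constant from something like $n-p$ to the finer $C^{p,q}$; verifying the inequality $C^{p,q}\ge C^{p+1,q-1}$ for $p\ge q$ will be used to reduce the general primitive case to extremal $(p,q)$.
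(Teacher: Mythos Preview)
Your proposal is correct and follows essentially the same route as the paper: express the Weitzenb\"ock curvature term as $g(\mathfrak{R}(\varphi^{\mathfrak{u}}),\overline{\varphi}^{\mathfrak{u}})=\sum_\alpha \lambda_\alpha|\Xi_\alpha\varphi|^2$, decompose $\Lambda^{p,q}$ into $U(n)$-irreducibles $\Lambda^{p,q}_k$, establish the bound $|L\varphi|^2\le \tfrac{1}{C^{p,q}_k}|L|^2|\varphi^{\mathfrak{u}}|^2$ on each piece, and then apply the weighted eigenvalue lemma to conclude positivity, with the kernel identified as $\mathbb{C}\omega^p$ when $p=q$. One small correction: the monotonicity that actually drives the reduction is $C^{p,q}_0\le C^{p,q}_k$ (in the Lefschetz level $k$, within fixed $(p,q)$), not $C^{p,q}\ge C^{p+1,q-1}$; the latter is mentioned in the introduction but plays no role in the proof.
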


In case $p=0$ or $q=0$ Theorem \ref{VanishingHodgeNumbers} follows from Bochner's \cite{BochnerVectorFieldsAndRic} work since K\"ahler manifolds with $n$-positive K\"ahler curvature operators have positive Ricci curvature. Similarly, Theorem \ref{MainVanishingTheorem} follows from Theorem \ref{VanishingHodgeNumbers} and Bochner's observation that K\"ahler manifolds with positive Ricci curvature satisfy $h^{n-1,0}=h^{n,0} = 0$. 

If the K\"ahler curvature operator is merely $3$-positive, then the only forms not controlled by  Theorem \ref{VanishingHodgeNumbers} or Bochner's work are primitive $(n-1,1)$-forms. 

\vspace{2mm}

Many of the previously mentioned results have rigidity analogues. Howard-Smyth-Wu \cite{HowardSmythWuNonnegBiSect} and Wu \cite{WuCompactKaehlerNonnegBiSectII} studied compact K\"ahler manifolds with nonnegative bisectional curvature, and Mok \cite{MokUniformizationKaehler} finally gave a complete classification. Gu \cite{GuNewProofGeneralizedFrankelConj} gave a new proof using Ricci flow methods and Gu-Zhang \cite{GuZhangExtensionMokTheorem} extended the result to nonnegative orthogonal bisectional curvature. 

Due to Bochner's work \cite{BochnerVectorFieldsAndRic}, on a K\"ahler manifold with $k$-nonnegative Ricci curvature every harmonic $(p,0)$-form is parallel for $k \leq p \leq n.$ Similarly, we have 

\begin{maintheorem}
Let $(M,g)$ be a compact K\"ahler manifold of complex dimension $n$. 

If
\begin{align*}
\lambda_1 + \ldots + \lambda_{\floor{C^{p,q}}} + \left( C^{p,q} - \floor{C^{p,q}} \right) \cdot \lambda_{\floor{C^{p,q}}+1} \geq 0,
\end{align*}
then every harmonic $(p,q)$-form is parallel. 

In particular, if $\lambda_1 + \ldots + \lambda_{\floor{C^{p,q}}} \geq  0,$ then every harmonic $(p,q)$-form is parallel and specifically if $\lambda_1 + \ldots + \lambda_{n+1-p} \geq 0,$ then every $(p,p)$-form is parallel.
\label{MainRigidityTheorem}
\end{maintheorem}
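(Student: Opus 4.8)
The plan is to run the Bochner technique as in the proof of Theorem~\ref{VanishingHodgeNumbers}, but to keep track of the equality case. Let $\omega$ be a harmonic $(p,q)$-form on the compact Kähler manifold $M$; by Hodge and Serre duality one may assume $p+q\le n$, so that $C^{p,q}$ is defined. I would start from the Weitzenböck formula for the Hodge Laplacian on $(p,q)$-forms, which expresses it as $\Delta = \nabla^{*}\nabla + \mathfrak{q}(\mathfrak{R})$, where $\mathfrak{q}(\mathfrak{R})$ is the curvature term determined by the Kähler curvature operator. Since $\omega$ is harmonic, pairing with $\omega$ and integrating over the compact manifold yields
\begin{align*}
\int_M |\nabla\omega|^2\,\dVol \;=\; -\int_M \langle \mathfrak{q}(\mathfrak{R})\omega,\omega\rangle\,\dVol .
\end{align*}

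The essential ingredient is the pointwise estimate underlying Theorem~\ref{VanishingHodgeNumbers}: for every $(p,q)$-form $\varphi$ and at every point of $M$,
\begin{align*}
\langle \mathfrak{q}(\mathfrak{R})\varphi,\varphi\rangle \;\ge\; \Big(\lambda_1 + \ldots + \lambda_{\floor{C^{p,q}}} + \big(C^{p,q}-\floor{C^{p,q}}\big)\lambda_{\floor{C^{p,q}}+1}\Big)\,|\varphi|^2 .
\end{align*}
Here the fractional correction is vacuous when $p=q$, since $C^{p,p}=n+1-p\in\N$; the case $p\neq q$ is reduced to primitive forms via the Lefschetz decomposition, the constants $C^{p,q}$ being arranged so that the primitive $(p,q)$-component governs the whole sum. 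Granting this inequality, the hypothesis of the theorem says precisely that the coefficient on the right-hand side is $\ge 0$, so $\langle \mathfrak{q}(\mathfrak{R})\omega,\omega\rangle\ge 0$ everywhere on $M$.

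Combining the two displays forces $\int_M |\nabla\omega|^2\,\dVol\le 0$, hence $\nabla\omega\equiv 0$: the harmonic $(p,q)$-form $\omega$ is parallel, which is the assertion. For the ``in particular'' statements it suffices to observe that $\lambda_1+\ldots+\lambda_{\floor{C^{p,q}}}\ge 0$ already implies $\lambda_{\floor{C^{p,q}}+1}\ge 0$ --- otherwise all of $\lambda_1,\ldots,\lambda_{\floor{C^{p,q}}}$ would be strictly negative --- so the displayed weighted sum is automatically $\ge 0$; and for $(p,p)$-forms $C^{p,p}=n+1-p$ is an integer, so the hypothesis collapses to $\lambda_1+\ldots+\lambda_{n+1-p}\ge 0$.

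The genuine content is the pointwise curvature estimate, which is the ``estimation'' half of the paper and makes no use of strictness, so I expect that to be the only real obstacle; once it is in hand, Theorem~\ref{MainRigidityTheorem} is simply the equality case of the Bochner argument, exploiting that two nonnegative integrands with vanishing total integral must each vanish identically. No separate analysis of the points where $\langle \mathfrak{q}(\mathfrak{R})\omega,\omega\rangle$ or $\mathfrak{R}$ itself degenerate is needed, since the argument only ever uses $|\nabla\omega|^2\ge 0$ pointwise.
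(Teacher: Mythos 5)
Your proposal is correct and takes essentially the same route as the paper: both use the Bochner technique, with the nonnegativity of the curvature term of the Lichnerowicz (Hodge) Laplacian supplied by the weighted eigenvalue hypothesis via the $U(n)$-irreducible decomposition of $\Ext^{p,q}$ (Corollary~\ref{StrongestCurvatureAssumption} with $\kappa=0$); whether one then integrates, as you do, or applies the maximum principle, as the paper does, is immaterial. Note only that your displayed pointwise estimate with coefficient the weighted eigenvalue sum acting on $|\varphi|^2$ is slightly off as stated (it would fail on multiples of $\omega^p$ when the weighted sum is positive, the correct bound involving $|\mathring\varphi|^2$ as in Corollary~\ref{StrongestCurvatureAssumption}), but since Theorem~\ref{MainRigidityTheorem} needs only the nonnegativity of the curvature term, this does not affect the argument.
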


Combined with the observation that harmonic $(n,0)$-forms and $(n-1,0)$-forms are parallel if the Ricci curvature is positive, Theorem \ref{MainRigidityTheorem} implies the following global result.

\begin{corollary*}
Let $(M,g)$ be an $n$-dimensional K\"ahler manifold. If 
\begin{align*}
\lambda_1 + \lambda_2 + \left( 1 - \frac{2}{n} \right) \lambda_3 \geq 0,
\end{align*}
then every harmonic form is parallel.
\end{corollary*}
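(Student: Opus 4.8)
The plan is to reduce, through the Hodge decomposition, to showing that every harmonic $(p,q)$-form is parallel, and then to route each bidegree to the right tool: Theorem \ref{MainRigidityTheorem} for $p,q\geq 1$, and the Bochner estimate for the holomorphic forms $q=0$ (and, by complex conjugation, $p=0$). On a compact K\"ahler manifold the $(p,q)$-parts of a harmonic form are again harmonic; moreover complex conjugation and the Hodge star $\star$ are parallel operators, with $\star$ taking harmonic $(p,q)$-forms to harmonic $(n-q,n-p)$-forms, so being parallel is preserved under both. Hence I only need to treat bidegrees with $0\leq q\leq p$ and $p+q\leq n$.

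The arithmetic crux is that the hypothesis forces $\lambda_3\geq 0$: if $\lambda_3<0$ then $\lambda_1\leq\lambda_2\leq\lambda_3<0$, and since $1-\tfrac{2}{n}\geq 0$ the left-hand side is at most $\bigl(3-\tfrac{2}{n}\bigr)\lambda_3<0$, a contradiction. Therefore $\lambda_i\geq 0$ for all $i\geq 3$, so $W(t):=\lambda_1+\ldots+\lambda_{\floor t}+(t-\floor t)\lambda_{\floor t +1}$ is nondecreasing on $[2,\infty)$; since the hypothesis reads precisely $W\bigl(3-\tfrac{2}{n}\bigr)\geq 0$, we obtain $W(t)\geq 0$ for every $t\geq 3-\tfrac{2}{n}$. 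In particular $\lambda_1+\ldots+\lambda_n=W(n)\geq 0$ (using $n\geq 3-\tfrac{2}{n}$ for $n\geq 2$), so the K\"ahler curvature operator is $n$-nonnegative. A Bochner-type estimate---the non-strict counterpart of the fact, recalled above, that $n$-positive K\"ahler curvature operators have positive Ricci curvature---bounds the Ricci curvature below by $\lambda_1+\ldots+\lambda_n$, whence $\Ric\geq 0$. Bochner's theorem (the $k$-nonnegative Ricci version quoted above, with $k=1$) then shows that every harmonic $(p,0)$-form is parallel, and harmonic $(0,0)$-forms are constants; this disposes of all bidegrees with $q=0$ or $p=0$.

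For $1\leq q\leq p$ with $p+q\leq n$ I would invoke Theorem \ref{MainRigidityTheorem} with $C^{p,q}=n+1-\tfrac{p^2+q^2}{p+q}$ (so that $C^{p,p}=n+1-p$). By the monotonicity of $W$ it is enough to verify $C^{p,q}\geq 3-\tfrac{2}{n}$, i.e.\ $\tfrac{p^2+q^2}{p+q}\leq n-2+\tfrac{2}{n}$. Writing $s=p+q\leq n$ and $p^2+q^2=s^2-2pq$, the left side equals $s-\tfrac{2pq}{s}$; since $q\geq 1$ forces $pq\geq s-1$, this is at most $s-2+\tfrac{2}{s}$, and the latter is increasing for $s\geq 2$, hence at most its value $n-2+\tfrac{2}{n}$ at $s=n$. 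Thus $W(C^{p,q})\geq W\bigl(3-\tfrac{2}{n}\bigr)\geq 0$, Theorem \ref{MainRigidityTheorem} applies, and every harmonic $(p,q)$-form is parallel; combined with the previous paragraph and the dualities above, every harmonic form on $M$ is parallel. I expect the only genuine obstacle to be the passage from the single scalar hypothesis to the whole family $W(C^{p,q})\geq 0$ through this monotonicity, together with the bookkeeping that the two exceptional bidegrees $(n-1,0)$ and $(n,0)$---where $C^{p,q}$ falls below $3-\tfrac{2}{n}$ and Theorem \ref{MainRigidityTheorem} is silent---are instead absorbed by the Ricci estimate.
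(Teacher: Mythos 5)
Your proof is correct and follows the paper's intended route: apply Theorem \ref{MainRigidityTheorem} to all bidegrees with $C^{p,q}\geq 3-\tfrac{2}{n}$ and absorb the remaining $q=0$ (and, by conjugation and Hodge star, $p=0$ and complementary) cases via Bochner's theorem, using that $\lambda_1+\ldots+\lambda_n\geq 0$ forces $\Ric\geq 0$. You make explicit the two points the paper leaves implicit---that the hypothesis forces $\lambda_3\geq 0$ so the weighted sum $W$ is nondecreasing beyond $2$, and that $C^{p,q}\geq 3-\tfrac{2}{n}$ whenever $1\leq q\leq p$ and $p+q\leq n$---and both verifications are sound.
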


Recall that the Riemannian curvature operator of a K\"ahler manifold has a kernel of dimension at least $n(n-1)$. Therefore the results in \cite{PetersenWinkNewCurvatureConditionsBochner} reduce to Gallot and D. Meyer's  \cite{GallotMeyerCurvOperatorAndForms} rigidity theorem for manifolds with nonnegative curvature operator, when the Riemannian manifold is K\"ahler. \vspace{2mm}

Due to the work of P. Li \cite{LiSobolevConstant} and Gallot \cite{GallotSobolevEstimates}, the Bochner technique also implies estimation results provided a lower bound on the Ricci curvature and an upper bound on the diameter are assumed. In the situation of Theorem \ref{MainEstimationTheorem} this follows from the fact that the Ricci curvature is bounded from below by the sum of the lowest $n$ eigenvalues of the K\"ahler curvature operator.

\begin{maintheorem} 
Let $\kappa \leq 0$ and $D>0$ and suppose that $(M,g)$ is a compact connected $n$-dimensional K\"ahler manifold with diameter at most $D.$ 

If 
\begin{align*}
\lambda_1 + \ldots + \lambda_{\floor{C^{p,q}}} + \left( C^{p,q} - \floor{C^{p,q}} \right) \cdot \lambda_{\floor{C^{p,q}}+1} \geq \kappa (\floor{C^{p,q}}+1),
\end{align*}
then
\begin{align*}
h^{p,q}(M) \leq {n \choose p} {n \choose q} \exp \left( C(n, \kappa D^2) \cdot \sqrt{- \kappa D^2 \cdot \left(n+2-|p-q| \right)(p+q)} \right).
\end{align*}

In particular, there is $\varepsilon(n) > 0$ such that $\kappa D^2 \geq - \varepsilon(n)$ implies $h^{p,q} \leq {n \choose p} {n \choose q}$.

If 
\begin{align*}
\lambda_1 + \lambda_2 + \left( 1 - \frac{2}{n} \right) \lambda_3 \geq \kappa,
\end{align*}
then the total Betti number is bounded by
\begin{align*}
\sum_{p+q=0}^n h^{p,q} \leq 2^{2n} \exp  \left( C(n, \kappa D^2) \cdot \sqrt{- \kappa D^2} \right).
\end{align*}
\label{MainEstimationTheorem}
\end{maintheorem}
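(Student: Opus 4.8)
The plan is to run the Bochner technique quantitatively: feed the pointwise Weitzenb\"ock curvature estimate that already powers Theorems~\ref{VanishingHodgeNumbers} and~\ref{MainRigidityTheorem} into the Moser iteration / heat kernel estimates of P.~Li \cite{LiSobolevConstant} and Gallot \cite{GallotSobolevEstimates}. By Serre duality we may assume $p+q\le n$, so that $C^{p,q}$ is given by the convention of the introduction. Let $\varphi$ be a harmonic $(p,q)$-form and write the Bochner formula as $\Delta\varphi=\nabla^*\nabla\varphi+\mathcal{K}(\varphi)$, where $\mathcal{K}$ is the Weitzenb\"ock curvature operator on $(p,q)$-forms. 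Extracting from the proof of Theorem~\ref{VanishingHodgeNumbers} the pointwise estimate of $\langle\mathcal{K}(\varphi),\varphi\rangle$ in terms of the eigenvalues of $\mathfrak{R}$, the hypothesis of Theorem~\ref{MainEstimationTheorem} (which is precisely the ``$\geq\kappa\cdot(\floor{C^{p,q}}+1)$'' version of the ``$>0$'' hypothesis of Theorem~\ref{VanishingHodgeNumbers}) gives
\begin{align*}
\langle\mathcal{K}(\varphi),\varphi\rangle\ \geq\ \Bigl(\lambda_1+\ldots+\lambda_{\floor{C^{p,q}}}+\bigl(C^{p,q}-\floor{C^{p,q}}\bigr)\lambda_{\floor{C^{p,q}}+1}\Bigr)\cdot c(p,q)\,|\varphi|^2\ \geq\ -\,a\,|\varphi|^2,
\end{align*}
where $c(p,q)$ is the form-degree factor built into that estimate and $a\geq 0$ is a constant with $a\asymp -\kappa\,(n+2-|p-q|)(p+q)$. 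Combining this with the Kato inequality $|\nabla\varphi|^2\geq|\nabla|\varphi||^2$ in the standard way yields the differential inequality $\Delta_0|\varphi|\geq -a|\varphi|$ weakly on $M$, where $\Delta_0=\operatorname{div}\grad$; that is, $|\varphi|$ is a nonnegative weak subsolution of the Schr\"odinger operator $\Delta_0+a$.

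Next I would supply the geometric input needed to estimate such subsolutions. Since on a K\"ahler manifold $\Ric$ is bounded below by the sum $\lambda_1+\ldots+\lambda_n$ of the lowest $n$ eigenvalues of $\mathfrak{R}$, and since an elementary comparison of ordered sums shows that the hypothesis forces $\lambda_1+\ldots+\lambda_n\geq c_0(n)\,\kappa$, we obtain $\Ric\geq c_0(n)\,\kappa\, g$. Together with $\diam M\leq D$ this is exactly the setting of \cite{LiSobolevConstant,GallotSobolevEstimates}: one gets a Neumann--Sobolev inequality on $M$, hence a mean value inequality for nonnegative weak subsolutions of $\Delta_0+a$, of the form
\begin{align*}
\sup_M u^2\ \leq\ \frac{\Lambda}{\Vol(M)}\int_M u^2,\qquad \Lambda=\Lambda(n,\kappa D^2,aD^2)\leq\exp\!\left(C(n,\kappa D^2)\sqrt{aD^2}\,\right).
\end{align*}
Equivalently, one may argue with the heat semigroup $P_t=e^{t\Delta_0}$ on functions: Kato's inequality gives $|\varphi|\leq e^{ta}\,P_t|\varphi|$ for harmonic $\varphi$, and the Li--Yau on-diagonal bound $H_t(x,x)\leq C(n,\kappa D^2)\,\Vol(M)^{-1}(D^2/t)^{n}$ for $t\leq D^2$, optimized in $t$, yields the same conclusion.

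To convert this into a bound on $h^{p,q}$ I would use the standard trace argument. Put $N=h^{p,q}$, fix an $L^2$-orthonormal basis $\varphi_1,\ldots,\varphi_N$ of harmonic $(p,q)$-forms and set $\rho(x)=\sum_i|\varphi_i(x)|^2$, so $\int_M\rho=N$. At a maximum point $x_0$ of $\rho$, let $v$ be a unit eigenvector for the largest eigenvalue $A\geq\rho(x_0)/\bigl(\binom{n}{p}\binom{n}{q}\bigr)$ of the symmetric endomorphism $w\mapsto\sum_i\langle\varphi_i(x_0),w\rangle\varphi_i(x_0)$ of $\Ext^{p,q}T^\ast_{x_0}M$, and put $\psi=\sum_i\langle\varphi_i(x_0),v\rangle\varphi_i$. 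Then $\psi$ is a harmonic $(p,q)$-form with $\|\psi\|_{L^2}^2=A$ and $|\psi(x_0)|^2=A^2$, so applying the mean value inequality to $u=|\psi|$ gives $A^2\leq\Lambda A/\Vol(M)$, hence $\rho(x_0)\leq\binom{n}{p}\binom{n}{q}\Lambda/\Vol(M)$ and therefore $h^{p,q}=\int_M\rho\leq\Vol(M)\cdot\sup_M\rho\leq\binom{n}{p}\binom{n}{q}\,\Lambda$. Inserting the bounds for $\Lambda$ and $a$ gives the stated estimate; as $\kappa D^2\to0^-$ the exponential tends to $1$, so integrality of $h^{p,q}$ produces $\varepsilon(n)$. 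For the total Betti number, the same ordered-sum comparison used to deduce Theorem~\ref{MainVanishingTheorem} from Theorem~\ref{VanishingHodgeNumbers} shows that $\lambda_1+\lambda_2+\bigl(1-\frac{2}{n}\bigr)\lambda_3\geq\kappa$ implies the hypothesis above for every admissible $(p,q)$ with the corresponding $a$ bounded by a dimensional multiple of $-\kappa$; summing the resulting estimates and using $\sum_{p+q=0}^n\binom{n}{p}\binom{n}{q}=2^{2n}$ gives the claim.

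The main obstacle I anticipate is quantitative bookkeeping rather than a new idea: one must keep precise track of the form-degree factor $c(p,q)$ in the Weitzenb\"ock estimate so that the effective potential comes out as $a\asymp -\kappa\,(n+2-|p-q|)(p+q)$ and no worse, and then carefully couple this with the exponential dependence on $aD^2$ in the mean value inequality of \cite{LiSobolevConstant,GallotSobolevEstimates} so that the exponent is exactly $\sqrt{-\kappa D^2(n+2-|p-q|)(p+q)}$. The delicate point is that the sharpness of the Hodge-number bound is inherited jointly from the sharp curvature condition (controlling $a$) and the sharp analytic estimate (controlling how $a$ propagates through the iteration), and both must be used at full strength at once.
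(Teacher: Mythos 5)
Your proposal is correct and follows essentially the same route as the paper: both establish the pointwise Weitzenb\"ock estimate $g(\Ric(\varphi),\overline\varphi)\geq\kappa\,(n+2-|p-q|)(p+q)\,|\mathring\varphi|^2$ via Corollary~\ref{StrongestCurvatureAssumption} on each $U(n)$-irreducible piece $\Ext^{p,q}_k$, observe the Ricci lower bound coming from $\lambda_1+\ldots+\lambda_n$, and then invoke the Li--Gallot Sobolev/mean-value machinery together with the trace argument. The only difference is presentational: the paper delegates Kato's inequality, the Moser/heat-kernel iteration, and the trace argument to the cited \cite[Theorem 1.9]{PetersenWinkNewCurvatureConditionsBochner}, whereas you unpack that black box explicitly.
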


As in Theorems \ref{VanishingHodgeNumbers} and \ref{MainRigidityTheorem}, the conclusion of Theorem \ref{MainEstimationTheorem} also holds if $\lambda_1 + \ldots + \lambda_{\floor{C^{p,q}}} \geq \kappa \floor{C^{p,q}}$ and thus specifically for $h^{p,p}$ if $\lambda_1 + \ldots + \lambda_{n+1-p} \geq \kappa(n+1-p).$ \vspace{2mm}

For a Riemannian manifold, a famous theorem of Tachibana \cite{TachibanaPosCurvOperator} asserts that any Einstein manifold with nonnegative curvature operator is locally symmetric. Moreover, if the curvature operator is positive, then the manifold has constant sectional curvature. Brendle \cite{BrendleEinsteinNIC} generalized this to Einstein metrics with nonnegative, respectively positive, isotropic curvature. In real dimension four this was observed earlier by Micallef and Wang \cite{MicallefWangNIC}. 

Notice that only the rigidity part of these theorems actually applies to K\"ahler manifolds. Tachibana-type results specifically for K\"ahler manifolds follow from the classification results for K\"ahler manifolds of nonnegative, respectively positive, bisectional and orthogonal bisectional curvature due to Mori \cite{MoriProjectiveManifoldsWithAmpleTangentBundles}, Siu-Yau \cite{SiuYauCompactKaehlerPosBiholCurv}, Mok \cite{MokUniformizationKaehler} and Chen \cite{ChenPosOrthBisectCurv}, Gu-Zhang \cite{GuZhangExtensionMokTheorem}.

We have the following analogue of Tachibana's theorem for K\"ahler manifolds.

\begin{maintheorem}
Suppose that $(M,g)$ is a compact connected K\"ahler-Einstein manifold of complex dimension $n \geq 4.$ 

If 
\begin{align*}
\lambda_{1} + \ldots + \lambda_{\floor{\frac{n+1}{2}}} + \frac{1+(-1)^n}{4} \cdot \lambda_{\floor{\frac{n+1}{2}}+1} \geq 0,
\end{align*}
then the curvature tensor is parallel. 

If the inequality is strict, then $(M,g)$ has constant holomorphic sectional curvature.
\label{KaehlerTachibana}
\end{maintheorem}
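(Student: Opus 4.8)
The plan is to apply the Bochner technique directly to the curvature tensor, in the spirit of Tachibana's original argument but using the eigenvalue estimates for $\mathfrak{R}$ that underlie Theorems \ref{VanishingHodgeNumbers}--\ref{MainEstimationTheorem}. The starting point is that on a compact K\"ahler--Einstein manifold the curvature tensor $\Rm$, regarded as a section of the bundle $S^2_B(\mathfrak{u}(n))$ of algebraic K\"ahler curvature operators (equivalently, as a $\mathfrak{u}(n)$--valued $(1,1)$--form obeying the first Bianchi identity), is harmonic: the second Bianchi identity gives $d^{\nabla}\Rm=0$, while the contracted second Bianchi identity together with the Einstein condition --- which makes $\Ric$ parallel and $\Scal$ constant --- gives $\delta^{\nabla}\Rm=0$. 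Hence the Hodge--type Laplacian of $\Rm$ vanishes, and the Weitzenb\"ock formula yields
\begin{align*}
\nabla^{*}\nabla \Rm + \mathfrak{q}(\mathfrak{R})\, \Rm = 0,
\end{align*}
where $\mathfrak{q}(\mathfrak{R})$ is the resulting Weitzenb\"ock curvature operator on $S^2_B(\mathfrak{u}(n))$, acting through the $\mathfrak{u}(n)$--representation; on a K\"ahler--Einstein manifold the contribution of the form part of the Weitzenb\"ock term is a multiple of the identity which is nonnegative, since the eigenvalue hypothesis already forces $\Scal \geq 0$.

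Integrating over the compact manifold gives
\begin{align*}
\int_M |\nabla \Rm|^2 \, \dVol + \int_M \langle \mathfrak{q}(\mathfrak{R})\, \Rm, \Rm \rangle \, \dVol = 0,
\end{align*}
so everything reduces to the pointwise inequality $\langle \mathfrak{q}(\mathfrak{R})\, \Rm, \Rm\rangle \geq 0$: this forces $\nabla \Rm = 0$, proving the first assertion. For the second assertion one notes that when the eigenvalue inequality is strict the pointwise bound is strict unless $\Rm$ is, at each point, the curvature operator of a space of constant holomorphic sectional curvature; hence $\langle \mathfrak{q}(\mathfrak{R})\,\Rm,\Rm\rangle$ must vanish identically, which together with $\nabla\Rm=0$ yields the final claim.

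The essential step is therefore the algebraic estimate bounding $\langle \mathfrak{q}(\mathfrak{R})\, \Rm, \Rm\rangle$ from below in terms of the eigenvalues $\lambda_1 \leq \cdots \leq \lambda_{n^2}$ of $\mathfrak{R}$. I would write $\mathfrak{q}(\mathfrak{R})$ on $S^2_B(\mathfrak{u}(n)) \subset \mathfrak{u}(n)\otimes\mathfrak{u}(n)$ through the action of $\mathfrak{R}$ on the two tensor slots, exactly as in the proofs of the Hodge number estimates, and then exploit the structural feature that distinguishes the curvature tensor from a generic $(1,1)$--form: $\Rm$ itself satisfies the first Bianchi identity and occurs quadratically and symmetrically in $\mathfrak{q}(\mathfrak{R})\,\Rm$. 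Symmetrizing over the two slots and using the Bianchi constraint allows one to replace a bound valid on all of $\mathfrak{u}(n)\otimes\mathfrak{u}(n)$ by a bound on the Bianchi--symmetric subspace, which roughly halves the number of eigenvalues that must be controlled --- this is the source of the threshold $\floor{\frac{n+1}{2}}$, in contrast to the value $C^{1,1}=n$ that a naive application of Theorem \ref{VanishingHodgeNumbers} would suggest. Optimizing the resulting combinatorial estimate, while keeping track of the trace relation $\tr\mathfrak{R}=\Scal$ dictated by the Einstein condition, produces the sharp coefficient $\tfrac{1+(-1)^n}{4}$ in front of $\lambda_{\floor{(n+1)/2}+1}$, the parity recording whether the $n$ ``diagonal'' directions of $\mathfrak{u}(n)$ distribute evenly between the two halves used in the splitting.

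The main obstacle is precisely this sharp algebraic lemma: determining the optimal constant requires a careful analysis of how the first Bianchi identity intertwines the eigenspaces of $\mathfrak{R}$ inside $S^2_B(\mathfrak{u}(n))$, and it is at this point that the assumption $n \geq 4$ enters --- for $n \leq 3$ the relevant decomposition of $S^2_B(\mathfrak{u}(n))$ is too small for the estimate to close, although in those dimensions the statement is in any case contained in the classification of K\"ahler manifolds with nonnegative, respectively positive, orthogonal bisectional curvature. Once the lemma is available, the remaining ingredients --- harmonicity of $\Rm$ on a K\"ahler--Einstein manifold, the Weitzenb\"ock formula, and the identification of the equality locus with the constant holomorphic sectional curvature model --- are routine.
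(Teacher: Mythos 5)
Your proposal correctly identifies the overall framework: the curvature tensor of a compact K\"ahler--Einstein manifold is harmonic via the Bianchi identities, the Bochner formula applies, and everything reduces to a pointwise lower bound for the curvature term $g(\Ric(R),\overline{R})$ in terms of the eigenvalues of $\mathfrak{R}$, via Lemma \ref{WeightedGeneralBochnerLemma}. You also correctly intuit that the Bianchi symmetry of $R$ should yield a constant roughly half of the naive $C^{1,1}=n$, and that the coefficient $\tfrac{1+(-1)^n}{4}$ is a floor artifact of a half-integer constant $C$. This much matches the paper.

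There is, however, a genuine gap where you acknowledge one: the sharp algebraic lemma. Your explanation --- that ``symmetrizing over the two slots and using the Bianchi constraint'' restricts to a subspace and ``roughly halves the number of eigenvalues'' --- is not how the paper gets the constant, and as stated it cannot produce the exact value $C=\tfrac{n+1}{2}$. The paper instead proves the universal identity $|R^{\mathfrak{u}}|^2=4(n+1)|\mathring{R}|^2-4|\mathring{\Ric}|^2$ (Lemma \ref{HatOnKaehlerCurvatureOperators}), obtained by Schur's lemma on Alekseevski's $U(n)$-irreducible decomposition of $\operatorname{Sym}^2_B(\mathfrak{u}(n))$ together with the explicit computation $|R^{\mathfrak{u}}|^2=32k(k+1)(n-k)$ for $\mathbb{CP}^k\times\C^{n-k}$ (Proposition \ref{CPkxCnkExample}) to fix the universal constants. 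Paired with the estimate $|LR|^2=|L\mathring{R}|^2\le 8|L|^2|\mathring{R}|^2$ from \cite{PetersenWinkNewCurvatureConditionsBochner}, and using $\mathring{\Ric}=0$ in the Einstein case, this yields $|LR|^2\le\tfrac{2}{n+1}|L|^2|R^{\mathfrak{u}}|^2$, i.e.\ $C=\tfrac{n+1}{2}$, exactly reproducing the threshold $\floor{\tfrac{n+1}{2}}$ and the parity coefficient. Two of your surrounding remarks also do not correspond to the paper's argument: there is no separate ``form part of the Weitzenb\"ock term'' to discard (the curvature term is handled in one piece via Proposition \ref{CurvatureTermLichnerowiczLaplacian}), and the Einstein hypothesis enters solely by killing the $|\mathring{\Ric}|^2$ term in the identity for $|R^{\mathfrak{u}}|^2$, not through a trace constraint on $\mathfrak{R}$. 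Your explanation of the role of $n\ge 4$ (``the decomposition is too small for the estimate to close'') is speculative and not supported by the proof; the estimates are dimension-free, and the restriction is more plausibly a matter of the statement being subsumed by known classifications in low dimension. Finally, the equality analysis for the strict case should run: vanishing of both nonnegative integrands forces $\nabla R=0$ and $g(\Ric(R),\overline{R})=0$ separately, and the strict eigenvalue inequality together with the second part of Lemma \ref{WeightedGeneralBochnerLemma} forces $R^{\mathfrak{u}}=0$, which by Lemma \ref{HatOnKaehlerCurvatureOperators} is equivalent to constant holomorphic sectional curvature.
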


The Assumptions in Theorem \ref{KaehlerTachibana} are satisfied in particular when $\lambda_{1} + \ldots + \lambda_{\floor{\frac{n+1}{2}}} \geq 0$ or $\lambda_{1} + \ldots + \lambda_{\floor{\frac{n+1}{2}}} > 0,$ respectively.

In \cite{PetersenWinkNewCurvatureConditionsBochner} we show that any Einstein manifold of real dimension $m$ with $\floor{\frac{m-1}{2}}$-nonnegative Riemannian curvature operator is locally symmetric. However, any K\"ahler manifold satisfying this condition in fact has  nonnegative curvature operator and thus the result reduces to Tachibana's \cite{TachibanaPosCurvOperator} original theorem on manifolds with nonnegative curvature operator. \vspace{2mm}

The proofs of the main theorems rely on the Bochner technique. If $(M,g)$ is a Riemannian manifold, the associated Lichnerowicz Laplacian on tensors is 
\begin{align*}
\Delta_L T = \nabla^{*} \nabla T + c \Ric(T)
\end{align*}
where $c>0$ is a constant. For $1$-forms $\varphi$, $\Ric(\varphi)$ is determined by the Ricci curvature but otherwise $\Ric(T)$ depends on the entire Riemannian curvature tensor. 

Our new approach explains how the action of the holonomy algebra $\mathfrak{g}$ on tensors simplifies the curvature term of the Lichnerowicz Laplacian. Specifically we show that every complex valued $(0,r)$-tensor $T$ satisfies
\begin{align*}
g( \Ric(T), \overline{T} ) = \sum_{\Xi_{\alpha} \in \mathfrak{g}} \lambda_{\alpha} | \Xi_{\alpha} T |^2
\end{align*}
where $\lbrace \Xi_{\alpha} \rbrace$ is an orthonormal basis for the restricted curvature operator $\mathfrak{R}_{|\mathfrak{g}} \colon \mathfrak{g} \to \mathfrak{g}$ and $\lbrace \lambda_{\alpha} \rbrace$ are the corresponding eigenvalues. This generalizes Poor's \cite{PoorHolonomyProofPosCurvOperatorThm} idea of using the derivative of the regular representation to study the curvature term on $p$-forms. 

The key insight in gaining control on the curvature term is that if $E$ is a holonomy irreducible tensor bundle, then there are constants $c(E) \leq C(E)$ such that $| \Xi_{\alpha} T |^2 \leq c(E) \cdot |T|^2$ while $\sum_{\Xi_{\alpha} \in \mathfrak{g}} | \Xi_{\alpha} T |^2 = C(E) \cdot |T|^2$. Lemma \ref{WeightedGeneralBochnerLemma} then provides a method to estimate $g( \Ric(T), \overline{T} )$ based on a lower bound on a weighted sum of the eigenvalues of the curvature operator $\mathfrak{R}_{|\mathfrak{g}} \colon \mathfrak{g} \to \mathfrak{g}$.

The proofs of Theorems \ref{MainVanishingTheorem} - \ref{MainEstimationTheorem} are an application of this principle to $(p,q)$-forms on K\"ahler manifolds. In particular, they use the decomposition of the space of $(p,q)$-forms into $U(n)$-irreducible modules. Theorem \ref{KaehlerTachibana} is a similar application of our technique to the space of K\"ahler curvature operators. \vspace{2mm}

Section \ref{PreliminariesSection} introduces Lichnerowicz Laplacians and the relevant background material. Section \ref{SectionIrrReps} discusses the decomposition of $(p,q)$-forms into $U(n)$-irreducible modules, originally due to Chern \cite{ChernGeneralizationKaehlerGeometry}. In section \ref{SectionEstimationCurvatureTerm} we study the Lichnerowicz Laplacian on $(p,q)$-forms. In particular, lemma \ref{EstimateLieAlgebraActionpqForms} and proposition \ref{LichnerowiczEstimatePQForms} establish the required estimates to apply lemma \ref{WeightedGeneralBochnerLemma} to the $U(n)$-irreducible modules of the space of $(p,q)$-forms. The proofs of Theorems \ref{MainVanishingTheorem} - \ref{MainEstimationTheorem} are given in section \ref{SectionProofOfMainTheorems} and Theorem \ref{KaehlerTachibana} is proven in section \ref{SectionKaehlerTachibana}. \vspace{2mm}

\textit{Acknowledgements.} We would like to thank Greg Kallo for many conversations.

\section{Preliminaries}
\label{PreliminariesSection}

\subsection{Tensors}
\label{SubsectionOnTensorsAndHat}

Let $(V,g)$ be an $m$-dimensional Euclidean vector space and let $\operatorname{Sym}^2(V) \subset \bigotimes^2 V^{*}$ denote the space of symmetric $(0,2)$-tensors on $V.$ 

The metric $g$ induces a metric on $\bigotimes^{r} V^{*}$ and $\Ext^r V$. In particular, if $\lbrace e_i \rbrace_{i=1, \ldots, m}$ is an orthonormal basis for $V$, then $\left\lbrace  e_{i_1} \wedge \ldots \wedge e_{i_r} \right\rbrace _{1 \leq i_1 < \ldots < i_r \leq m}$ is an orthonormal basis for $\Ext^r V.$ This also induces an inner product on $\mathfrak{so}(V)$ via its identification with $\Ext^2 V.$ 

Let $V_{\C} = V \otimes_{\R} \C.$ For a complex valued, $\R$-multilinear tensor $T$ on $V$, i.e. $T \in \bigotimes^r V_{\C}^{*},$ and $L \in \mathfrak{so}(V)$ set
\begin{align*}
(LT)(X_1, \ldots, X_r) = - \sum_{i=1}^r T(X_1, \ldots, LX_i, \ldots, X_r).
\end{align*}

If $\mathfrak{g} \subset \mathfrak{so}(V)$ is a Lie subalgebra, define $T^{\mathfrak{g}} \in \left( \bigotimes^r V_{\C}^{*} \right) \otimes_{\R} \mathfrak{g} $ by 
\begin{equation*}
g( L, T^{\mathfrak{g}}(X_1, \ldots, X_r)) = (LT)(X_1, \ldots, X_r)
\end{equation*}
for all $L \in \mathfrak{g} \subset \mathfrak{so}(V) = \Ext^2V$.
Furthermore, if $\mathfrak{R} \colon \mathfrak{g} \to \mathfrak{g}$ is a self-adjoint operator with orthonormal eigenbasis $\lbrace \Xi_{\alpha} \rbrace$ and corresponding eigenvalues $\lbrace \lambda_{\alpha} \rbrace$, then 
\begin{align*}
\mathfrak{R}(T^{\mathfrak{g}}) = \mathfrak{R} \circ T^{\mathfrak{g}} = \sum_{\alpha} \mathfrak{R}( \Xi_{\alpha} ) \otimes \Xi_{\alpha} T
\end{align*}
and as a consequence we obtain
\begin{align*}
g( \mathfrak{R}(T^{\mathfrak{g}}), \overline{T}{^{\mathfrak{g}}}) 
%= \sum_{\alpha, \beta} g( \mathfrak{R}(\Xi_{\alpha}), \Xi_{\beta}) g(\Xi_{\alpha} T , \Xi_{\beta} T) 
= \sum_{\alpha} \lambda_{\alpha} | \Xi_{\alpha} T |^2. 
\end{align*} 
In particular, notice that 
\begin{align*}
|T^{\mathfrak{g}}|^2 = \sum_{\alpha} | \Xi_{\alpha} T |^2.
\end{align*}

In case $\mathfrak{g}=\mathfrak{u}(n),$ we will write $T^{\mathfrak{u}}$ to simplify notation.

\begin{remarkroman}
Let $(M,g)$ be a Riemannian manifold and let $\mathfrak{R} \colon \Ext^2 TM \to \Ext^2 TM$ denote the curvature operator. If $\mathfrak{g} \subset \mathfrak{so}(2n)$ denotes the holonomy algebra, then $\mathfrak{R}_{|\mathfrak{g}} \colon \mathfrak{g} \to \mathfrak{g}$ and $\mathfrak{R}_{|\mathfrak{g}^{\perp}} = 0.$
\end{remarkroman}

\begin{example} \normalfont
Let $\mathfrak{g} \subset \mathfrak{so}(V) = \Ext^2 V$ be a Lie subalgebra. For a self-adjoint operator $\mathfrak{R} \colon \mathfrak{g} \to \mathfrak{g}$ let $R \in \operatorname{Sym}^2( \mathfrak{g}) \subset \operatorname{Sym}^2 \left( \Ext^2 V \right)$ denote the corresponding bilinear form. $R$ is an algebraic curvature tensor if it satisfies the first Bianchi identity. In this case we write $R \in \operatorname{Sym}_B^2 \left( \mathfrak{g} \right).$

The proof of \cite[Proposition 1.5]{PetersenWinkNewCurvatureConditionsBochner} shows that if $\lbrace \Xi_{\alpha}\rbrace$ is an orthonormal eigenbasis for $\mathfrak{R}$ and $L \in \mathfrak{g}$, then
\begin{align*}
| L R |^2 = 2 \sum_{\alpha < \beta} \left( \lambda_{\alpha} - \lambda_{\beta} \right)^2 g( L \Xi_{\alpha}, \Xi_{\beta} )^2.
\end{align*}
It follows that 
\begin{align*}
| R{^{\mathfrak{g}}} |^2 = 2  \sum_{\gamma} \sum_{\alpha < \beta} \left( \lambda_{\alpha} - \lambda_{\beta} \right)^2 g( ( \Xi_{\gamma} ) \Xi_{\alpha}, \Xi_{\beta} )^2.
\end{align*}

Notice that $\mathfrak{so}(V)$ induces a Lie bracket $[ \cdot, \cdot ]$ on $\Ext^2 V.$ For $2$-forms $\Xi_{\alpha}, \Xi_{\beta}$ we have 
\begin{align*}
(\Xi_{\alpha}) \Xi_{\beta} = [ \Xi_{\alpha}, \Xi_{\beta}].
\end{align*}

In particular, the coefficients $g( ( \Xi_{\gamma} ) \Xi_{\alpha}, \Xi_{\beta} )$ are the structure constants and $g( ( \Xi_{\gamma} ) \Xi_{\alpha}, \Xi_{\beta} )^2$ is fully symmetric in $\Xi_{\alpha},$ $\Xi_{\beta},$ $\Xi_{\gamma}.$
\label{HatsOfCurvatureTensors}
\end{example}

\subsection{The Lie algebra $\mathfrak{u}(V)$}
\label{SubsectionLieAlgebraUn}
Suppose $(V,g)$ is a $2n$-dimensional Euclidean vector space with compatible almost complex structure $J \colon V \to V.$ It follows that
\begin{align*}
\mathfrak{u}(V) = \left\lbrace L \in \mathfrak{gl}(V) \ \vert \ L  \circ J = J \circ L, \ g(L \cdot, \cdot ) + g( \cdot, L \cdot ) = 0 \right\rbrace.
\end{align*}

Let $e_1, \ldots, e_n, f_1=Je_1, \ldots, f_n= Je_n$ be an orthonormal basis for $V.$ Under the identification of $\Ext^2 V$ with $\mathfrak{so}(V),$ an orthonormal basis for $\mathfrak{u}(V)$ is given by
\begin{align*}
R_{ij} & = \frac{1}{\sqrt{2}} \left( e_i \wedge e_j + f_i \wedge f_j \right) \ \text{for} \ 1 \leq i < j \leq n, \\
I_{ij} & = \frac{1}{\sqrt{2}} \left( e_i \wedge f_j + e_j \wedge f_i \right) \ \text{for} \ 1 \leq i < j \leq n,  \\
I_{ii} & =  e_i \wedge f_i \ \text{for} \ 1 \leq i \leq n.
\end{align*}
Note that $R_{ij}=-R_{ji}$ and $I_{ij}=I_{ji}.$

Moreover, an orthonormal basis for $\mathfrak{u}(V)^{\perp} \subset \mathfrak{so}(V)$ is given by
\begin{align*}
\left( R_{ij} \right)^{\perp} & = \frac{1}{\sqrt{2}} \left( e_i \wedge e_j - f_i \wedge f_j \right) \ \text{for} \ 1 \leq i < j \leq n, \\
\left( I_{ij} \right)^{\perp} & = \frac{1}{\sqrt{2}} \left( e_i \wedge f_j - e_j \wedge f_i \right) \ \text{for} \ 1 \leq i < j \leq n.
\end{align*}

The space of complex valued, $\R$-linear $1$-forms on $V$ decomposes into $\C$-linear and conjugate linear forms,
\begin{align*}
\Ext^1 V^{*}_{\C} = \Ext^{1,0} V^{*} \oplus \Ext^{0,1} V^{*}.
\end{align*}
Thus if $dx^{1}, \ldots, dx^{n},$ $dy^{1}, \ldots, dy^{n}$ denotes the dual basis, then 
\begin{align*}
dz^{i} & = dx^{i} + \sqrt{-1} dy^{i} \in \Ext^{1,0} V^{*}, \\
d \bar{z}^{i} & = dx^{i} - \sqrt{-1} dy^{i} \in \Ext^{0,1} V^{*}.
\end{align*} 
Furthermore, the {\em K\"ahler form} $\omega( \cdot , \cdot ) =g(J \cdot , \cdot)$ is given by
\begin{align*}
\omega = \frac{\sqrt{-1}}{2} \sum_{i=1}^n dz^i \wedge d \bar{z}^i.
\end{align*}

\begin{proposition}
Let $i \neq j$. The following hold:
\begin{align*}
(R_{ij}) dz^{i} & = - \frac{1}{\sqrt{2}} dz^j, \ 
(R_{ij}) d \bar{z}^{i} = - \frac{1}{\sqrt{2}} d \bar{z}^j, \\
(I_{ij}) dz^{i} & =  \frac{\sqrt{-1}}{\sqrt{2}} dz^{j}, \hspace{2.8mm} (I_{ij}) d \bar{z}^{i} = - \frac{\sqrt{-1}}{\sqrt{2}} d \bar{z}^j,  \\
(I_{ii}) dz^{i} & =  \sqrt{-1} dz^i, \hspace{4.4mm}
(I_{ii}) d \bar{z}^{i}  = - \sqrt{-1} d \bar{z}^i.
\end{align*}
\label{ActionOfLieAlgebraUnOnPQForms}
\end{proposition}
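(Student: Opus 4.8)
The plan is to reduce the proposition to the action of $\mathfrak{so}(V)$ on ordinary real-valued $1$-forms, for which the action is the explicit formula $(L\varphi)(X) = -\varphi(LX)$ from Subsection~\ref{SubsectionOnTensorsAndHat}, and then pass to the complex basis $dz^i = dx^i + \sqrt{-1}\,dy^i$, $d\bar z^i = dx^i - \sqrt{-1}\,dy^i$ by $\R$-linearity. Since $R_{ij}$ and $I_{ij}$ (with $i\neq j$) involve only the indices $i,j$ and $I_{ii}$ only the index $i$, the whole computation involves at most the four basis vectors $e_i,f_i,e_j,f_j$ at a time.

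First I would write down the relevant endomorphisms. Using the identification $\Ext^2 V \cong \mathfrak{so}(V)$ normalized so as to be compatible with the conventions already fixed (in particular $f_i = Je_i$ and $\omega(\cdot,\cdot) = g(J\cdot,\cdot)$), a direct evaluation of $u\wedge v$ as a skew endomorphism gives, for $i\neq j$,
\[
R_{ij}e_i = -\tfrac{1}{\sqrt 2}e_j,\quad R_{ij}e_j = \tfrac{1}{\sqrt 2}e_i,\quad R_{ij}f_i = -\tfrac{1}{\sqrt 2}f_j,\quad R_{ij}f_j = \tfrac{1}{\sqrt 2}f_i,
\]
together with the analogous formulas $I_{ij}e_i = -\tfrac{1}{\sqrt 2}f_j$, $I_{ij}f_i = \tfrac{1}{\sqrt 2}e_j$, $I_{ij}e_j = -\tfrac{1}{\sqrt 2}f_i$, $I_{ij}f_j = \tfrac{1}{\sqrt 2}e_i$, and $I_{ii}e_i = -f_i$, $I_{ii}f_i = e_i$; all these elements annihilate the remaining $e_k,f_k$.

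Next, dualizing via $(L\varphi)(X) = -\varphi(LX)$ converts the above into the action on $dx^i,dy^i$: one reads off, for instance, $R_{ij}\,dx^i = -\tfrac{1}{\sqrt2}\,dx^j$ and $R_{ij}\,dy^i = -\tfrac{1}{\sqrt2}\,dy^j$; $I_{ij}\,dx^i = -\tfrac{1}{\sqrt2}\,dy^j$ and $I_{ij}\,dy^i = \tfrac{1}{\sqrt2}\,dx^j$; and $I_{ii}\,dx^i = -dy^i$, $I_{ii}\,dy^i = dx^i$. Finally $L\,dz^i = L\,dx^i + \sqrt{-1}\,L\,dy^i$ and $L\,d\bar z^i = L\,dx^i - \sqrt{-1}\,L\,dy^i$ assemble these into the six claimed identities; e.g.
\[
I_{ii}\,dz^i = -dy^i + \sqrt{-1}\,dx^i = \sqrt{-1}\,(dx^i + \sqrt{-1}\,dy^i) = \sqrt{-1}\,dz^i,
\]
and similarly for the other five. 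As a consistency check, the same computation shows that $R_{ij}$, $I_{ij}$, $I_{ii}$ annihilate $dz^k,d\bar z^k$ for indices $k$ not appearing, while $R_{ij}=-R_{ji}$ and $I_{ij}=I_{ji}$ recover the action on $dz^j,d\bar z^j$.

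The only real difficulty here is sign bookkeeping: one must pin down the $\Ext^2 V \leftrightarrow \mathfrak{so}(V)$ identification (equivalently, the precise meaning of "$u\wedge v$ acting on $w$") consistently with the normalizations $\omega = g(J\cdot,\cdot)$ and $f_i = Je_i$ used above, since the opposite choice would flip the signs in the $R_{ij}$-row. Once that convention is fixed, every step is a one-line computation.
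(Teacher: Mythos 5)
Your proof is correct and takes essentially the same route the paper indicates: the paper's proof is the one-line remark that this is a direct computation, hinting at the convention via $(e_i \wedge f_j)\,dx^i = -dy^j$, and you simply carry that computation out in full (fix the $\Ext^2 V \cong \mathfrak{so}(V)$ identification, act on $e_k,f_k$, dualize to $dx^k,dy^k$, then pass to $dz^k,d\bar z^k$). Your intermediate formulas are all consistent with the paper's stated hint, so the sign conventions match.
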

\begin{proof}
This is a straightforward calculation. Notice that e.g. $(e_i \wedge f_j) dx^{i} = - dy^{j}$.
\end{proof}

\begin{remarkroman}
The K\"ahler form is in the kernel of the Lie algebra action of $\mathfrak{u}(n).$ That is, for $i \neq j$ we have $R_{ij} \omega = I_{ij} \omega = I_{ii} \omega = 0.$ Furthermore, in corollary \ref{HatZeroOnlyOnMultiplesOfKaehlerForm} we show that $\varphi \in \Ext^{k} V^{*}_{\C}$ satisfies $|\varphi ^{\mathfrak{u}} |^2 = 0$ if and only if $\varphi=0$ or $k$ is even and $\varphi$ is a multiple of $\omega^{k/2}$.
\label{KaehlerFormInKernel}
\end{remarkroman}

\subsection{Lichnerowicz Laplacians and holonomy}
\label{SectionHolonomy}

Let $(M,g)$ be a Riemannian manifold. For $c>0$ the Lichnerowicz Laplacian on $(0,r)$-tensors is given by 
\begin{align*}
\Delta_L T = \nabla^{*} \nabla T + c \Ric( T )
\end{align*}
where 
\begin{align*}
\Ric( T )(X_1, \ldots, X_r) = \sum_{i=1}^r \sum_{j=1}^m (R(X_i,e_j)T) (X_1, \ldots, e_j, \ldots, X_r)
\end{align*}
for an orthonormal frame $e_1, \ldots, e_{m}$ of the tangent bundle $TM.$ 

Let $(M,g)$ be connected with holonomy group $\Hol(g).$ The holonomy representation induces a representation on the tensor bundle $\bigotimes^k T^{*}_{\C}M.$ Suppose that $E$ is an invariant subbundle. If $T \in \Gamma(E)$, then $\nabla^{*} \nabla T \in \Gamma(E)$. Furthermore, since the Riemannian curvature tensor takes values in the holonomy algebra, it also follows that $\Ric( T ) \in \Gamma(E).$ Thus the Lichnerowicz Laplacian preserves subbundles $E$ which are invariant under the holonomy representation, $\Delta_L \colon \Gamma(E) \to \Gamma(E).$ Moreover, $E$ decomposes into a direct sum of $\Hol(g)$-irreducible subbundles.

A tensor $T$ is {\em harmonic} if $\Delta_L T =0$ and in this case we have the Bochner formula
\begin{align*}
\Delta \frac{1}{2} |T|^2 = | \nabla T |^2 + g( \Ric(T), \overline{T}).
\end{align*}
The Bochner technique is based on the fact that if $g( \Ric(T), \overline{T}) \geq 0$ and $|T|$ has a maximum, then $T$ is parallel.

\begin{exampleroman}
The Hodge Laplacian is a Lichnerowicz Laplacian for $c=1$. It follows that the decomposition of $\Ext^k T^{*}_{\C}M$ into $\Hol(g)$-irreducible modules induces a decomposition of harmonic forms and the de Rham cohomology groups.
\label{HodgeLaplacianAsLichnerowicz}
\end{exampleroman}

The following proposition is an immediate consequence of \cite[Lemmas 9.3.3 and 9.4.3]{PetersenRiemGeom}.

\begin{proposition} 
\label{CurvatureTermLichnerowiczLaplacian}
Let $\mathfrak{R} \colon \Ext^2 TM \to \Ext^2 TM$ denote the curvature operator of $(M,g)$. If $\mathfrak{g} \subset \mathfrak{so}(m)$ denotes the holonomy algebra, then $\mathfrak{R}_{|\mathfrak{g}} \colon \mathfrak{g} \to \mathfrak{g}$, $\mathfrak{R}_{|\mathfrak{g}^{\perp}} = 0$ and
\begin{equation*}
g( \Ric(T), \overline{T}) = g( \mathfrak{R}_{|\mathfrak{g}}( T^{\mathfrak{g}}), \overline{T}{^{\mathfrak{g}}} )
\end{equation*}
for every $T \in \bigotimes^r T^{*}_{\C} M$.
\end{proposition}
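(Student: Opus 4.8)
\textbf{Proof proposal for Proposition \ref{CurvatureTermLichnerowiczLaplacian}.}

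The plan is to trace through the definition of the curvature term $\Ric(T)$ and rewrite it using the machinery of the hat operation $T \mapsto T^{\mathfrak g}$ set up in Subsection \ref{SubsectionOnTensorsAndHat}. First I would recall that the curvature operator $\mathfrak R$ of $(M,g)$, viewed as a self-adjoint endomorphism of $\Ext^2 TM \cong \mathfrak{so}(m)$, vanishes on the orthogonal complement of the holonomy algebra $\mathfrak g$: this is the content of the Ambrose--Singer theorem together with the fact that $R(X,Y)$ takes values in $\mathfrak g$ and the symmetries of the curvature tensor make $\mathfrak R$ self-adjoint, so $\mathfrak R(\mathfrak g^\perp) \subseteq \mathfrak g^\perp$ while $g(\mathfrak R \xi, \eta) = g(R(\xi), \eta)$ forces the restriction to $\mathfrak g^\perp$ to vanish. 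Hence $\mathfrak R = \mathfrak R_{|\mathfrak g} \oplus 0$, which is the first two assertions; these are essentially quoted from \cite[Lemmas 9.3.3 and 9.4.3]{PetersenRiemGeom}.

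The substance is the identity $g(\Ric(T),\overline T) = g(\mathfrak R_{|\mathfrak g}(T^{\mathfrak g}), \overline{T}{^{\mathfrak g}})$. I would proceed as follows. Fix an orthonormal eigenbasis $\{\Xi_\alpha\}$ of $\mathfrak R_{|\mathfrak g}$ with eigenvalues $\{\lambda_\alpha\}$, and extend to an orthonormal basis of $\mathfrak{so}(m)$ by adjoining a basis of $\mathfrak g^\perp$ on which $\mathfrak R$ vanishes. Writing $R(X_i, e_j)$ in terms of this basis and using that $R$ is valued in $\mathfrak g$, one expands $\Ric(T)(X_1,\dots,X_r) = \sum_i \sum_j (R(X_i,e_j)T)(X_1,\dots,e_j,\dots,X_r)$. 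The key step is to recognize that $\sum_j (R(X_i,e_j)T)(\dots,e_j,\dots)$, when paired against $\overline T$ and summed over $i$, reorganizes — after expanding $R(X_i,e_j) = \sum_\alpha g(R(X_i,e_j),\Xi_\alpha)\Xi_\alpha$ and using the definition $(L T)(X_1,\dots,X_r) = g(L, T^{\mathfrak g}(X_1,\dots,X_r))$ for $L \in \mathfrak g$ — into $\sum_\alpha \lambda_\alpha |\Xi_\alpha T|^2$. Indeed $g(R(X_i,e_j),\Xi_\alpha)$ is, up to the curvature operator symmetries, $g(\mathfrak R(X_i \wedge e_j), \Xi_\alpha)$, and contracting the $X_i, e_j$ slots against $T$ and $\overline T$ produces exactly $\sum_\alpha \lambda_\alpha |\Xi_\alpha T|^2$; then Subsection \ref{SubsectionOnTensorsAndHat} identifies this sum with $g(\mathfrak R(T^{\mathfrak g}), \overline{T}{^{\mathfrak g}})$. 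Since $T^{\mathfrak g}$ is by construction $\mathfrak g$-valued, $\mathfrak R(T^{\mathfrak g}) = \mathfrak R_{|\mathfrak g}(T^{\mathfrak g})$, which gives the claim.

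The main obstacle I anticipate is purely bookkeeping: keeping the index $i$ (which tensor slot the curvature endomorphism acts on) and the contraction index $j$ straight, and verifying that the double sum $\sum_{i}\sum_j$ combined with the minus sign in the definition of the $\mathfrak{so}(V)$-action on tensors assembles coherently into the single quadratic form $\sum_\alpha \lambda_\alpha |\Xi_\alpha T|^2$ rather than into something with spurious cross terms. This is precisely where the skew-symmetry of $R(X_i,e_j)$ in its two arguments and the placement of $e_j$ in the $i$-th slot of $T$ conspire correctly, and it is exactly the computation packaged in \cite[Lemmas 9.3.3 and 9.4.3]{PetersenRiemGeom}; I would cite those lemmas for the contraction identity and supply only the short additional remark that, because $\mathfrak R$ annihilates $\mathfrak g^\perp$, one may replace $\mathfrak R$ by $\mathfrak R_{|\mathfrak g}$ and $T$ by its $\mathfrak g$-component $T^{\mathfrak g}$ throughout. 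No separate convergence or regularity issues arise since everything is pointwise and finite-dimensional.
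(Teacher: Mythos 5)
Your proof is correct and takes essentially the same route as the paper, which dispatches the proposition as an immediate consequence of \cite[Lemmas 9.3.3 and 9.4.3]{PetersenRiemGeom}: those lemmas give the contraction identity $g(\Ric(T),\overline{T})=\sum_\alpha \lambda_\alpha |\Xi_\alpha T|^2$ over an eigenbasis of the full curvature operator on $\Ext^2 TM$, and your closing observation — that $\mathfrak R$ annihilates $\mathfrak g^\perp$, so the sum collapses to $\mathfrak g$ and equals $g(\mathfrak R_{|\mathfrak g}(T^{\mathfrak g}),\overline{T}{^{\mathfrak g}})$ — is exactly the reduction the paper has in mind.
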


\begin{remarkroman}
Recall that an irreducible Riemannian manifold $(M,g)$ is Einstein unless its holonomy group is $SO(n)$ or $U(n)$. The reader is referred to \cite{PetersenWinkNewCurvatureConditionsBochner} for the case $\Hol(g)=SO(n)$. In this paper we will restrict ourselves to $\Hol(g)=U(n)$. Recall that $\Hol(g) \subset SU(n)$ if and only if there exists a parallel holomorphic volume form and in this case $(M,g)$ is Ricci flat.
\end{remarkroman}

Suppose that $\Hol(g)$ is contained in $U(n) \subset SO(2n),$ i.e. $(M,g)$ is a K\"ahler manifold of complex dimension $n.$ 
The induced curvature operator $\mathfrak{R} = \mathfrak{R}_{| \mathfrak{u}(n)} \colon \mathfrak{u}(n) \to \mathfrak{u}(n)$ is the {\em K\"ahler curvature operator.}

The following lemma is the fundamental tool for controlling the curvature term of the Lichnerowicz Laplacian.

\begin{lemma}
Let $(V,g)$ be a Euclidean vector space, $\mathfrak{g} \subset \mathfrak{so}(V)$ a Lie subalgebra and let $\mathfrak{R} \colon \mathfrak{g} \to \mathfrak{g}$ be self-adjoint with eigenvalues $\lambda_1 \leq \ldots \leq \lambda_{\dim \mathfrak{g}}$. Let $T \in \bigotimes^r V^{*}_{\C}$ and suppose that there is $C \geq 1$ such that
\begin{align*}
| L T |^2 \leq \frac{1}{C} | T^{\mathfrak{g}} |^2 |L|^2 
\end{align*}
for all $L \in \mathfrak{g}$. Let $1 \leq l \leq \floor{C}$ be an integer and let $\kappa \leq 0$. 

If $\lambda_1 + \ldots + \lambda_l + \left( C - l \right) \lambda_{l+1} \geq \kappa (l+1),$ then 
$g( \mathfrak{R}( T^{\mathfrak{g}}), \overline{T}{^{\mathfrak{g}}} ) \geq \frac{\kappa (l+1)}{C} | T^{\mathfrak{g}} |^2$.

If $\lambda_1 + \ldots + \lambda_l + \left( C - l \right) \lambda_{l+1} > 0,$ then 
$g( \mathfrak{R}( T^{\mathfrak{g}}), \overline{T}{^{\mathfrak{g}}} )>0$ unless $T^{\mathfrak{g}} = 0.$
\label{WeightedGeneralBochnerLemma}
\end{lemma}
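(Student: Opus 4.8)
The strategy is to expand $g(\mathfrak{R}(T^{\mathfrak{g}}),\overline{T}{^{\mathfrak{g}}})$ in the eigenbasis $\{\Xi_\alpha\}$ of $\mathfrak{R}$ using the identity established in Section~\ref{SubsectionOnTensorsAndHat}, namely
\begin{align*}
g(\mathfrak{R}(T^{\mathfrak{g}}),\overline{T}{^{\mathfrak{g}}}) = \sum_{\alpha} \lambda_{\alpha} |\Xi_{\alpha} T|^2,
\qquad
|T^{\mathfrak{g}}|^2 = \sum_{\alpha} |\Xi_{\alpha} T|^2,
\end{align*}
and then to bound the right-hand side from below by minimizing $\sum_\alpha \lambda_\alpha a_\alpha$ over all admissible coefficient vectors $a_\alpha = |\Xi_\alpha T|^2 \geq 0$. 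First I would record the two constraints the $a_\alpha$ satisfy: the hypothesis $|LT|^2 \leq \frac{1}{C}|T^{\mathfrak{g}}|^2|L|^2$ applied to $L = \Xi_\alpha$ (each a unit vector) gives $a_\alpha = |\Xi_\alpha T|^2 \leq \frac{1}{C}|T^{\mathfrak{g}}|^2$, while the second identity above gives $\sum_\alpha a_\alpha = |T^{\mathfrak{g}}|^2$. Normalizing $s_\alpha = a_\alpha / |T^{\mathfrak{g}}|^2$ (assuming $T^{\mathfrak{g}}\neq 0$; otherwise both sides vanish and there is nothing to prove), we are reduced to the purely linear-algebraic problem of minimizing $\sum_\alpha \lambda_\alpha s_\alpha$ subject to $0 \leq s_\alpha \leq \frac{1}{C}$ and $\sum_\alpha s_\alpha = 1$.

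The key step is to solve this linear program. Since the $\lambda_\alpha$ are ordered increasingly, the minimum of a linear functional over this polytope is attained at a vertex where the mass is pushed onto the smallest eigenvalues: one takes $s_1 = \dots = s_l = \frac1C$, then $s_{l+1} = 1 - \frac{l}{C} = \frac{C-l}{C}$ (which lies in $[0,\frac1C]$ precisely because $l \leq \floor{C} \leq C \leq l+1$... here one needs $C \le l+1$, equivalently $l \ge C-1$; since we only assume $1\le l\le\floor{C}$ this need not hold, so more carefully the minimizing configuration fills $\lfloor C\rfloor$ slots at level $\frac1C$ and puts the remainder $\frac{C-\lfloor C\rfloor}{C}$ on the next eigenvalue — but since $\lambda_{l+1}\le\dots\le\lambda_{\lfloor C\rfloor +1}$, replacing that true minimizer's contribution by the coarser bound with $l$ only decreases it further when $\lambda$'s are nonnegative and we must be more careful when they are negative). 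I would therefore argue directly: for any admissible $(s_\alpha)$,
\begin{align*}
\sum_{\alpha} \lambda_\alpha s_\alpha
\;\geq\; \frac{1}{C}\left(\lambda_1 + \dots + \lambda_l\right) + \left(C - l\right)\frac{1}{C}\,\lambda_{l+1}
\;=\; \frac{1}{C}\Big(\lambda_1 + \dots + \lambda_l + (C-l)\lambda_{l+1}\Big),
\end{align*}
where the inequality is proved by a standard exchange/rearrangement argument: moving mass from a slot with a larger $\lambda$ to one with a smaller $\lambda$ (respecting the cap $\frac1C$) never increases the sum, and iterating drives any admissible configuration to the claimed one. Multiplying through by $|T^{\mathfrak{g}}|^2$ yields $g(\mathfrak{R}(T^{\mathfrak{g}}),\overline{T}{^{\mathfrak{g}}}) \geq \frac{1}{C}\big(\lambda_1 + \dots + \lambda_l + (C-l)\lambda_{l+1}\big)\,|T^{\mathfrak{g}}|^2$, and combining with the hypothesis $\lambda_1 + \dots + \lambda_l + (C-l)\lambda_{l+1} \geq \kappa(l+1)$ gives the first conclusion $g(\mathfrak{R}(T^{\mathfrak{g}}),\overline{T}{^{\mathfrak{g}}}) \geq \frac{\kappa(l+1)}{C}|T^{\mathfrak{g}}|^2$.

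For the strict statement, suppose $\lambda_1 + \dots + \lambda_l + (C-l)\lambda_{l+1} > 0$ and $T^{\mathfrak{g}} \neq 0$. The lower bound just derived is then strictly positive unless it is not sharp — but the point is that the bound I proved is a genuine lower bound valid for \emph{every} admissible configuration, so $g(\mathfrak{R}(T^{\mathfrak{g}}),\overline{T}{^{\mathfrak{g}}}) \geq \frac{1}{C}(\lambda_1+\dots+\lambda_l+(C-l)\lambda_{l+1})|T^{\mathfrak{g}}|^2 > 0$ immediately, with no further argument needed. The main obstacle I anticipate is getting the exchange argument for the linear program completely clean, in particular handling the case $l < C - 1$ correctly: there the truly optimal configuration spreads mass over $\lfloor C\rfloor$ slots rather than $l$, so one must check that the stated bound (with only $l$ full slots plus a partial $(l+1)$-st slot) is still a \emph{valid} lower bound — this follows because for $l+1 \le \alpha \le \lfloor C\rfloor + 1$ one has $\lambda_{l+1}\le\lambda_\alpha$, so redistributing the excess mass from the optimum's higher slots down to slot $l+1$ and treating them uniformly with weight $\lambda_{l+1}$ only decreases the sum; I would spell this monotonicity out carefully since it is the one place the ordering of the eigenvalues and the cap interact nontrivially.
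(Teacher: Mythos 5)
Your proof is correct and takes essentially the same approach as the paper: expand $g(\mathfrak{R}(T^{\mathfrak{g}}),\overline{T}{^{\mathfrak{g}}})=\sum_\alpha\lambda_\alpha|\Xi_\alpha T|^2$, normalize by $|T^{\mathfrak{g}}|^2$, and use the cap $s_\alpha\le 1/C$ from the hypothesis together with the ordering of the eigenvalues. The detour through the exchange/rearrangement argument (and your worry about the true minimizer filling $\lfloor C\rfloor$ rather than $l$ slots) is unnecessary, since the paper's one-line computation $\sum_\alpha\lambda_\alpha s_\alpha=\sum_\alpha(\lambda_\alpha-\lambda_{l+1})s_\alpha+\lambda_{l+1}\ge\frac{1}{C}\sum_{\alpha\le l}(\lambda_\alpha-\lambda_{l+1})+\lambda_{l+1}$ — bounding the terms with $\alpha\le l$ via $s_\alpha\le 1/C$ and those with $\alpha>l$ via $s_\alpha\ge0$ — yields your claimed lower bound directly without ever identifying the minimizer.
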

\begin{proof}
Suppose that $\lbrace \Xi_{\alpha} \rbrace$ is an orthonormal eigenbasis for $\mathfrak{R}.$ It follows as in \cite[Proof of Lemma 2.1]{PetersenWinkNewCurvatureConditionsBochner} that
\begin{align*}
g( \mathfrak{R}( T^{\mathfrak{g}}), \overline{T}{^{\mathfrak{g}}} ) & \geq \lambda_{l+1} \left( 1 - \frac{l}{C} \right) | T^{\mathfrak{g}}|^2 + \frac{| T^{\mathfrak{g}} |^2}{C} \sum_{\alpha}^{l} \lambda_{\alpha} 
= \frac{| T^{\mathfrak{g}} |^2}{C} \left( \sum_{\alpha}^{l} \lambda_{\alpha}  + ( C - l) \lambda_{l + 1} \right),
\end{align*}
which implies the claim.
\end{proof}

Clearly, choosing $l=\floor{C}$ provides the weakest (curvature) assumption. Note that this condition is in particular satisfied if $\lambda_1 + \ldots + \lambda_{\floor{C}} \geq \kappa \floor{C}$ or $\lambda_1 + \ldots + \lambda_{\floor{C}} >0$.

If $C$ is an integer, then the same proof yields that if $\lambda_1 + \ldots + \lambda_C \geq \kappa C$, then $g( \mathfrak{R}( T^{\mathfrak{g}}), \overline{T}{^{\mathfrak{g}}} ) \geq \kappa | T^{\mathfrak{g}} |^2.$

\section{$U(n)$-irreducible decomposition of $(p,q)$-forms}
\label{SectionIrrReps}

In this section we provide a description of the decomposition of $(p,q)$-forms into irreducible $U(n)$-modules. This is orginially due to Chern \cite{ChernGeneralizationKaehlerGeometry}, see also Fujiki \cite{FujikiCohomKaehlerSymplectic}. For completeness, we provide an elementary proof using characters. 

Let $V= \C^n$ and consider the natural $U(n)$-action on $V.$ Denote by
\begin{align*}
\Ext^{p,0} V^{*} = \Ext^p V^{*} = \operatorname{span}_{\C} \lbrace dz^{i_1} \wedge \ldots \wedge dz^{i_p} \ \vert \ 1 \leq i_1 < \ldots < i_p \leq n \rbrace
\end{align*}
the space of complex linear $p$-forms, by 
\begin{align*}
\Ext^{0,q} V^{*} = \Ext^q \overline{V^*} = \operatorname{span}_{\C} \lbrace d \bar{z}^{j_1} \wedge \ldots \wedge d \bar{z}^{j_q} \ \vert \ 1 \leq j_1 < \ldots < j_q \leq n \rbrace
\end{align*} the space of conjugate linear $q$-forms, and by
\begin{align*}
\Ext^{p,q} V^* = \Ext^{p,0} V^* \otimes_{\C} \Ext^{0,q} V^*
\end{align*}
the space of $(p,q)$-forms. 

For $0 \leq k \leq \min \lbrace p, q \rbrace$ set 
\begin{align*}
V^{p,q}_k = \Ext^{p-k,0} V^{*} \otimes_{\C} \operatorname{span}_{\C} \lbrace \omega^k \rbrace \otimes_{\C} \Ext^{0,q-k} V^{*}.
\end{align*}
Hence for $k \leq q \leq p$ we have the flag 
\begin{align*}
V^{p,q}_q   \subseteq \ldots \subseteq  V^{p,q}_{k} \subseteq \ldots \subseteq  V^{p,q}_{1} \subseteq  V^{p,q}_{0} = \Ext^{p,q} V^{*}.
\end{align*}

\begin{theorem}
The representations of $U(n)$ on 
\begin{align*}
\Ext^{p,q}_k V^{*} = V^{p,q}_k \cap \left( V^{p,q}_{k+1} \right)^{\perp}
\end{align*}
are irreducible and
\begin{align*}
\Ext^{p,q} V^{*} = \bigoplus_{k=0}^{\min \lbrace p,q \rbrace} \Ext^{p,q}_k V^{*}
\end{align*}
is an orthogonal decomposition.
\label{IrreducibleUnDecomposition}
\end{theorem}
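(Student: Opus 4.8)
The plan is to prove Theorem \ref{IrreducibleUnDecomposition} via character theory. First I would establish the decomposition $\Ext^{p,q} V^* = \bigoplus_{k} \Ext^{p,q}_k V^*$ as an orthogonal direct sum; since the $V^{p,q}_k$ form a descending flag of $U(n)$-submodules (each closed under the action because $\omega$ is $U(n)$-invariant, as noted in Remark \ref{KaehlerFormInKernel}), the orthogonal complements $\Ext^{p,q}_k V^* = V^{p,q}_k \cap (V^{p,q}_{k+1})^\perp$ telescope to give $\Ext^{p,q} V^* = \bigoplus_k \Ext^{p,q}_k V^*$, and one should check that wedging with $\omega$ (equivalently, the Lefschetz operator $L$) is injective on the relevant range so that $\dim \Ext^{p,q}_k V^* = \dim \Ext^{p-k,q-k}V^* - \dim \Ext^{p-k-1,q-k-1}V^*$. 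Alternatively this dimension count follows at the level of characters.

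The main work is showing each $\Ext^{p,q}_k V^*$ is irreducible. Since $\Ext^{p,q}_k V^* \cong \Ext^{p-k,q-k}_0 V^*$ as $U(n)$-modules (tensoring with the trivial module $\operatorname{span}_\C\{\omega^k\}$ and using that $\Ext^{p-k,0}V^* \otimes \Ext^{0,q-k}V^* \supset V^{p-k,q-k}_1 \cong \Ext^{p-k,q-k}_0 V^* \oplus (\text{lower})$ is compatible with the flag), it suffices to treat the primitive piece $\Ext^{p,q}_0 V^*$, i.e. the kernel of the contraction $\Lambda$ dual to $L$, for $p+q \le n$. I would compute the character of $\Ext^{p,0}V^* \otimes \Ext^{0,q}V^*$ as $e_p(x_1,\dots,x_n)\, e_q(x_1^{-1},\dots,x_n^{-1})$ where $x_i$ are eigenvalues of a maximal torus element, recognize $e_p$ and $e_q$ as the characters $\chi_{(1^p)}$ and $\chi_{\overline{(1^q)}}$ of the fundamental $U(n)$-representations, and then use the Littlewood–Richardson / Pieri-type rule for decomposing a tensor product $\chi_{(1^p)} \otimes \chi_{\overline{(1^q)}}$ of a "holomorphic" and an "antiholomorphic" fundamental representation of $U(n)$. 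The standard result is that this decomposes multiplicity-freely into the irreducibles with highest weights $(1,\dots,1,0,\dots,0,-1,\dots,-1)$ having $p-k$ ones and $q-k$ negative ones, for $0 \le k \le \min\{p,q\}$. Matching these summands with the flag quotients $\Ext^{p,q}_k V^*$ (by comparing dimensions, which are given by a hook-content or Weyl dimension formula) then forces each $\Ext^{p,q}_k V^*$ to be a single irreducible.

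Concretely, the steps in order are: (1) record that $\omega$ is $U(n)$-fixed, so $L = \omega \wedge (\cdot)$ and its adjoint $\Lambda$ are $U(n)$-equivariant, giving the flag and the orthogonal decomposition; (2) reduce irreducibility of $\Ext^{p,q}_k V^*$ to that of $\Ext^{p-k,q-k}_0 V^*$; (3) write the $U(n)$-character of $\Ext^{p,q}V^*$ as $e_p(x)\,e_q(x^{-1})$; (4) expand this product into Schur functions in $x_1^{\pm 1},\dots,x_n^{\pm 1}$ — i.e. into characters of rational irreducible representations of $U(n)$ — using the Pieri rule, obtaining exactly $\min\{p,q\}+1$ distinct multiplicity-one summands; (5) identify the $(k+1)$-st partial character sum with that of $V^{p,q}_{k}$ and hence each single summand with $\Ext^{p,q}_k V^*$, concluding irreducibility.

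The hard part will be step (4): carrying out the tensor-product decomposition of a "mixed" (covariant $\otimes$ contravariant) pair of fundamental $U(n)$-representations cleanly, i.e. verifying that $e_p(x)\,e_q(x^{-1})$ expands as $\sum_{k=0}^{\min\{p,q\}} s_{(1^{p-k}\,|\,1^{q-k})}(x)$ where $s_{(\alpha|\beta)}$ denotes the character of the rational irreducible with highest weight built from $\alpha$ and $\beta$. One can do this either by a direct Pieri computation after clearing denominators (multiplying through by $e_n(x)^{q}$ to pass to polynomial characters, decomposing $e_p \cdot s_{(n^q)/\cdots}$, then translating back), or by invoking the known branching/Newell–Littlewood-type formula for $GL(n)$; I would present the self-contained computation since the paper advertises "an elementary proof using characters."
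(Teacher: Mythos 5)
Your proposal is correct in outline and shares the paper's high-level strategy (character theory plus Weyl's classification of irreducible $U(n)$-representations), but the route through the hard step is genuinely different. The paper does \emph{not} reduce to the primitive piece $\Ext^{p-k,q-k}_0 V^*$ via the Lefschetz operator and does not invoke a Pieri/Littlewood--Richardson or Newell--Littlewood rule for the mixed tensor product $\Ext^p V \otimes \Ext^q \overline{V}$. Instead it computes the flag-quotient character $\chi^{p,q}_k = \chi^{p-k,q-k} - \chi^{p-k-1,q-k-1}$ directly and shows it equals an alternant $\frac{1}{\Delta}|\varepsilon^{f_1+n-1},\ldots,\varepsilon^{f_n}|$ by a single determinant identity: one expands the $(n+2)\times(n+2)$ Vandermonde determinant $P(s,t,\varepsilon_1,\ldots,\varepsilon_n)$ once as a difference product (giving $\sigma_{n-a+1}\sigma_{n-b}-\sigma_{n-a}\sigma_{n-b+1}$) and once by Laplace expansion along the first two rows (giving two-row-deleted alternants), and then matches coefficients. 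The character difference $\chi^{p,q}-\chi^{p-1,q-1}$ is recognized as $\tau_{n-p+1,q}/\varepsilon_{I_n}$, exactly one of those coefficients. This keeps the argument entirely self-contained, as the paper advertises; the only inputs are the Laplace expansion and Weyl's theorem. Your step (4) --- expanding $e_p(x)\,e_q(x^{-1})$ into Schur-type characters of rational $GL_n$-modules --- is true and would complete the proof, but it is essentially equivalent in depth to the theorem you are trying to prove, so carrying it out ``cleanly and self-containedly'' would cost roughly as much work as the paper's determinant computation, and simply citing the Koike/Newell--Littlewood formula would defeat the stated goal of an elementary proof. One further small point worth making explicit in your write-up: when $p+q>n$ the number of nonzero summands is fewer than $\min\{p,q\}+1$; in the paper's setup this is automatic because the alternant has a repeated exponent and vanishes, whereas in your Pieri-style expansion you must check that the offending highest weights simply do not occur, and in your step (2) you must restrict the Lefschetz isomorphism $\mathcal{L}^k\colon \Ext^{p-k,q-k}_0 \to \Ext^{p,q}_k$ to the range $p+q-2k\le n$ where it actually holds.
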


\begin{remark} \normalfont 
Let $\mathcal{L} \colon \varphi \mapsto \omega \wedge \varphi$ be  the Lefschetz map and let $\Lambda$ denote its dual. A $(p,q)$-form $\varphi$ is {\em primitive} if $\Lambda \varphi =0.$ It follows that $\Ext^{p,q}_0 V^{*}$ is the space of primitive $(p,q)$-forms and $\Ext^{p,q}_k V^{*} = \mathcal{L}^k \Ext^{p-k,q-k}_0 V^{*}$.
\end{remark}

The above decomposition of $\Ext^{p,q} V^{*}$ into $U(n)$-irreducible modules is due to Chern \cite{ChernGeneralizationKaehlerGeometry}.
However, we have not been able to access or obtain a copy of Chern's original proof.

For completness, in rest of this section we show that the character of $\Ext^{p,q}_k$ is the character of an irreducible $U(n)$-representation. Our proof is elementary and only uses Laplace's expansion of a determinant along two rows and Weyl's \cite[Chapter VII, Sections 4-5]{WeylClassicalGroups} classification of irreducible $U(n)$- representations. In particular, our method is different from Fujiki's approach in \cite{FujikiCohomKaehlerSymplectic}. \vspace{2mm}

Recall that the maximal torus $T^n \subset U(n)$ is 
\begin{align*}
T^n = \lbrace \diag(\varepsilon_1, \ldots, \varepsilon_n) \ \vert \ |\varepsilon_i|=1 \rbrace
\end{align*}
and for $U \in T^n$ we have
\begin{align*}
U dz^{i} = \varepsilon_i dz^i, \ U d \bar{z}^{j} = \bar{\varepsilon}_j d \bar{z}^j.
\end{align*}

More generally, let 
\begin{align*}
dz^{I_p} = dz^{i_1} \wedge \ldots \wedge dz^{i_p}, \ d \bar{z}^{J_q} = d \bar{z}^{j_1} \wedge \ldots \wedge d \bar{z}^{j_q}
\end{align*}
where $1 \leq i_1 < \ldots < i_p \leq n$ and $1 \leq j_1 < \ldots < j_q \leq n$. Similarly, define
\begin{align*}
\varepsilon_{I_p}=\varepsilon_{i_1} \cdot \ldots \cdot \varepsilon_{i_p}, \ \bar{\varepsilon}_{J_q}=\bar{\varepsilon}_{j_1} \cdot \ldots \cdot \bar{\varepsilon}_{j_q}.
\end{align*}
It follows that $dz^{I_p} \wedge d \bar{z}^{J_q}$ is an eigenvector of the induced action of the maximal torus with eigenvalue $\varepsilon_{I_p} \bar{\varepsilon}_{J_q}.$ This immediately implies that the character of $\Ext^{p,q} V^{*}$ is 
\begin{align*}
\chi^{p,q}=\sum_{I_p, J_q} \varepsilon_{I_p} \bar{\varepsilon}_{J_q}.
\end{align*}

\begin{remarkroman}
(a) We have the explicit formula
\begin{align*}
\chi^{p,q} = \sum_{k=0}^{\min \lbrace p, q \rbrace} { n-(p+q-2k) \choose k} \sum_{I_{p-k} \cap J_{q-k} = \emptyset} \varepsilon_{I_{p-k}} \bar{\varepsilon}_{J_{q-k}}.
\end{align*}
(b) Note that $V^{p,q}_k$ and $\Ext^{p-k,q-k} V^{*}$ are isomorphic $U(n)$-representations since they both have character $\chi^{p-k,q-k}$.
\label{CommentOnIsomorphicReps}
\end{remarkroman}

Following Weyl's notation in \cite[Chapter VII, Sections 4-5]{WeylClassicalGroups}, for $\varepsilon = ( \varepsilon_1, \ldots, \varepsilon_n )$ define the alternant
\begin{align*}
| \varepsilon^{l_1}, \varepsilon^{l_2}, \ldots, \varepsilon^{l_n} | = \begin{vmatrix}
\varepsilon_1^{l_1} & \varepsilon_1^{l_1-1} & \ldots &  \varepsilon_1^{l_1-n} \\
\varepsilon_2^{l_2} & \varepsilon_2^{l_2-1} & \ldots &  \varepsilon_2^{l_2-n} \\
\vdots & \vdots &  & \vdots \\
\varepsilon_n^{l_n} & \varepsilon_n^{l_n-1} & \ldots &  \varepsilon_n^{l_n-n}
\end{vmatrix}.
\end{align*}

Notice that in particular
\begin{align*}
\Delta = | \varepsilon^{n-1}, \varepsilon^{n-2}, \ldots, \varepsilon,  1 | = \prod_{i<j} ( \varepsilon_i - \varepsilon_j )
\end{align*}
is a Vandermonde determinant. Theorem \ref{WeylCharacterizationUnIrrReps} below is Weyl's classification of irreducible $U(n)$-representations in \cite[Chapter VII, Theorems 7.5.B and 7.5.C]{WeylClassicalGroups}.

\begin{theorem}
Let $f_1 \geq f_2 \geq \ldots \geq f_n$ be integers. Every representation of the unitary group $U(n)$ with character
\begin{align*}
\chi_{f_1, \ldots, f_n} = \frac{1}{\Delta} \cdot |\varepsilon^{f_1 + n-1}, \varepsilon^{f_2 + n-2}, \ldots, \varepsilon^{f_n}|
\end{align*}
is irreducible. 

Conversely, every irreducible representation of $U(n)$ has the character $\chi_{f_1, \ldots, f_n}$ for some integers $f_1 \geq f_2 \geq \ldots \geq f_n$.
\label{WeylCharacterizationUnIrrReps}
\end{theorem}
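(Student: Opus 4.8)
The plan is to prove Theorem~\ref{WeylCharacterizationUnIrrReps} by the classical route of harmonic analysis on the compact group $U(n)$: establish the Weyl integration formula, deduce that the candidate functions $\chi_{f_1,\ldots,f_n}$ form an orthonormal system of class functions, and then use positivity at the identity together with a density argument to identify this system with the set of all irreducible characters. The first reduction is to the maximal torus: since every unitary matrix is conjugate to a diagonal one and two diagonal matrices are conjugate exactly when related by a coordinate permutation, restriction to $T^n$ identifies continuous class functions on $U(n)$ with continuous $S_n$-symmetric functions of $\varepsilon=(\varepsilon_1,\ldots,\varepsilon_n)$, and the character of any finite-dimensional representation restricts to a symmetric Laurent polynomial.

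The technical heart is the Weyl integration formula
\[
\int_{U(n)}\phi(g)\,dg \;=\; \frac{1}{n!}\int_{T^n}\phi(\varepsilon)\,|\Delta(\varepsilon)|^2\,d\varepsilon
\]
for class functions $\phi$, with $d\varepsilon$ normalized Haar measure on $T^n$ and $\Delta(\varepsilon)=\prod_{i<j}(\varepsilon_i-\varepsilon_j)$. I would prove it by analyzing the conjugation map $U(n)/T^n\times T^n\to U(n)$, $(gT^n,t)\mapsto gtg^{-1}$: it is generically $n!$-to-one onto a set of full measure, and its Jacobian at $(gT^n,t)$ equals the determinant of $\id-\Ad(t)$ acting on $\mathfrak{u}(n)/\mathfrak{t}$, which splits into two-dimensional root planes on which that determinant is $|\varepsilon_i-\varepsilon_j|^2$, the product over $i<j$ being $|\Delta(\varepsilon)|^2$. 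Granting this, orthonormality of the $\chi_f$ is pure combinatorics: the monomials $\varepsilon_1^{m_1}\cdots\varepsilon_n^{m_n}$ are orthonormal in $L^2(T^n)$, the numerator $|\varepsilon^{f_1+n-1},\ldots,\varepsilon^{f_n}|$ equals $\sum_{\sigma\in S_n}\operatorname{sgn}(\sigma)\,\varepsilon^{\sigma(\lambda)}$ with $\lambda=(f_1+n-1,\ldots,f_n)$ strictly decreasing, and distinct admissible $f$ produce monomials supported in disjoint $S_n$-orbits; hence $\int_{T^n}\bigl||\varepsilon^{f_1+n-1},\ldots,\varepsilon^{f_n}|\bigr|^2 d\varepsilon=n!$ and the cross terms vanish, so $\langle\chi_f,\chi_{f'}\rangle_{L^2(U(n))}=\delta_{ff'}$.

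Next I would check that each $\chi_f$ is a genuine virtual character. Dividing the alternant by the Vandermonde $\Delta$ — by Laplace expansion and induction on $n$, or via the Jacobi--Trudi identity — exhibits $\chi_f$ as a symmetric Laurent polynomial with integer coefficients (a power of $\det$ times a Schur polynomial), hence, by the fundamental theorem on symmetric functions, a $\Z$-linear combination $\chi_f=\sum_i n_i\chi^{(i)}$ of irreducible characters $\chi^{(i)}$ with $n_i\in\Z$. A virtual character of $L^2$-norm $1$ satisfies $\sum_i n_i^2=1$, so $\chi_f=\pm\chi^{(i_0)}$ for a unique irreducible character; and evaluating at the identity via the Weyl dimension formula $\chi_f(1,\ldots,1)=\prod_{i<j}\frac{(f_i-f_j)+(j-i)}{j-i}>0$ — obtained as the limit of the quotient of alternants — forces the sign to be $+$. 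This proves that every $\chi_f$ is an irreducible character.

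For the converse, the alternants with strictly decreasing exponents span all antisymmetric Laurent polynomials, so the $\chi_f$ span all symmetric Laurent polynomials, which by Stone--Weierstrass are dense among class functions on $U(n)$; since each $\chi_f$ is an irreducible character and Schur orthogonality makes any irreducible character orthogonal to all but one of the $\chi_f$, no irreducible character can fail to appear. I expect the Weyl integration formula — computing the conjugation Jacobian and handling the measure-zero locus where the map degenerates — to be the main obstacle, with the dimension-formula limit in the last step a secondary but nontrivial point; everything else is bookkeeping with symmetric functions.
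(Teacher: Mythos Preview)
The paper does not prove this theorem: it is quoted verbatim as Weyl's classification from \cite[Chapter VII, Theorems 7.5.B and 7.5.C]{WeylClassicalGroups} and used as a black box in the proof of Lemma~\ref{CharacterOfExtpqk}. Your proposal is a correct outline of the classical proof --- essentially Weyl's own argument --- via the integration formula, orthonormality of the alternant quotients, the virtual-character-of-norm-one trick, and the Weyl dimension formula for the sign; so there is nothing in the paper to compare it against beyond noting that you have reconstructed the cited reference.

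One small point worth tightening: the passage from ``$\chi_f$ is a symmetric Laurent polynomial with integer coefficients'' to ``$\chi_f$ is a $\Z$-linear combination of irreducible characters'' is not quite the fundamental theorem on symmetric functions by itself. You need the additional (standard) fact that the ring of symmetric Laurent polynomials in $\varepsilon_1,\ldots,\varepsilon_n$ with integer coefficients is generated over $\Z$ by the characters of $\Ext^k V$ and $\det^{-1}$, i.e.\ by actual characters of $U(n)$-representations; then integrality of the Schur expansion gives the virtual-character conclusion. You gesture at this with ``a power of $\det$ times a Schur polynomial,'' which is the right idea, but the sentence as written conflates two steps.
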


Theorem \ref{IrreducibleUnDecomposition} is now an immediate consequence of the following observation. 

\begin{lemma}
The character of $\Ext^{p,q}_k V^{*}$ is given by 
\begin{align*}
\chi^{p,q}_k & = \chi^{p-k,q-k} - \chi^{p-k-1,q-k-1} = \chi_{f_1+n-1, f_2+n-2, \ldots, f_n}
\end{align*}
for $f_1= \ldots = f_{p-k} =1,$ $f_{p-k+1}= \ldots = f_{n-(q-k)}=0$ and $f_{n-(q-k)+1} = \ldots = f_n =-1.$
\label{CharacterOfExtpqk}
\end{lemma}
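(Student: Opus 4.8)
The plan is to obtain the first equality directly from the flag structure and then to identify the resulting virtual character $\chi^{p-k,q-k}-\chi^{p-k-1,q-k-1}$ with a Weyl character by a short symmetric-function computation. For the first equality, note that the subspaces in the flag $V^{p,q}_{q}\subseteq\cdots\subseteq V^{p,q}_{0}=\Ext^{p,q}V^{*}$ are $U(n)$-invariant and that, by definition, $\Ext^{p,q}_{k}V^{*}=V^{p,q}_{k}\cap(V^{p,q}_{k+1})^{\perp}$ is a $U(n)$-invariant complement of $V^{p,q}_{k+1}$ inside $V^{p,q}_{k}$; since $U(n)$ acts unitarily, $V^{p,q}_{k}\cong V^{p,q}_{k+1}\oplus\Ext^{p,q}_{k}V^{*}$ as $U(n)$-representations and hence $\chi^{p,q}_{k}=\chi(V^{p,q}_{k})-\chi(V^{p,q}_{k+1})$. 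By Remark \ref{CommentOnIsomorphicReps}(b) the representations $V^{p,q}_{j}$ and $\Ext^{p-j,q-j}V^{*}$ are isomorphic, so $\chi(V^{p,q}_{j})=\chi^{p-j,q-j}$; taking $j=k$ and $j=k+1$ (with the convention $\chi^{r,s}=0$ when $r<0$ or $s<0$, which matches $V^{p,q}_{k+1}=0$ for $k=\min\{p,q\}$) yields $\chi^{p,q}_{k}=\chi^{p-k,q-k}-\chi^{p-k-1,q-k-1}$.

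For the second equality put $a=p-k$ and $b=q-k$, so $0\le b\le a$ and $a+b\le n$. Since the character of $\Ext^{a,b}V^{*}$ is $\sum_{I_{a},J_{b}}\varepsilon_{I_{a}}\bar\varepsilon_{J_{b}}$ with the multi-indices varying independently, it factors as $\chi^{a,b}=e_{a}(\varepsilon)\,e_{b}(\bar\varepsilon)$, where $e_{j}$ is the $j$-th elementary symmetric polynomial. On $T^{n}$ we have $\bar\varepsilon_{i}=\varepsilon_{i}^{-1}$, hence $e_{b}(\bar\varepsilon)=e_{n-b}(\varepsilon)/e_{n}(\varepsilon)$, and multiplying by $e_{n}(\varepsilon)$ gives
\begin{align*}
e_{n}(\varepsilon)\bigl(\chi^{a,b}-\chi^{a-1,b-1}\bigr)=e_{a}(\varepsilon)\,e_{n-b}(\varepsilon)-e_{a-1}(\varepsilon)\,e_{n-b+1}(\varepsilon),
\end{align*}
all symmetric polynomials now evaluated at $\varepsilon$. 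The right-hand side is the $2\times2$ determinant with first row $(e_{n-b},e_{n-b+1})$ and second row $(e_{a-1},e_{a})$, which by the $2\times2$ case of the dual Jacobi--Trudi identity equals the Schur polynomial $s_{\nu}(\varepsilon)$ for $\nu=(2^{a},1^{n-a-b})$, the conjugate of the two-column partition $(n-b,a)$ (well defined since $a+b\le n$). Writing $\nu$ with exactly $n$, possibly zero, parts, $\nu=(2^{a},1^{n-a-b},0^{b})$, and dividing by $e_{n}(\varepsilon)=s_{(1^{n})}(\varepsilon)$ — equivalently, shifting every exponent in the bialternant down by one — we get $\chi^{a,b}-\chi^{a-1,b-1}=s_{\nu-(1^{n})}(\varepsilon)$ with $\nu-(1^{n})=(1^{a},0^{n-a-b},(-1)^{b})$, i.e. exactly $f_{1}=\cdots=f_{p-k}=1$, $f_{p-k+1}=\cdots=f_{n-(q-k)}=0$, $f_{n-(q-k)+1}=\cdots=f_{n}=-1$. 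Multiplying back by the Vandermonde determinant $\Delta$ this reads
\begin{align*}
\Delta\bigl(\chi^{a,b}-\chi^{a-1,b-1}\bigr)=|\varepsilon^{f_{1}+n-1},\varepsilon^{f_{2}+n-2},\ldots,\varepsilon^{f_{n}}|,
\end{align*}
and since $f_{1}+n-1>f_{2}+n-2>\cdots>f_{n}$, Weyl's Theorem \ref{WeylCharacterizationUnIrrReps} identifies $\chi^{p,q}_{k}$ with the character of an irreducible $U(n)$-representation; together with the orthogonal decomposition of the first step this also establishes Theorem \ref{IrreducibleUnDecomposition}.

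The technical heart is the passage from $e_{a}e_{n-b}-e_{a-1}e_{n-b+1}$ to a single alternant. If one prefers not to quote Jacobi--Trudi, this is precisely the identity obtained by expanding the alternant $|\varepsilon^{f_{1}+n},\ldots,\varepsilon^{f_{n}+1}|$ (which is $e_{n}(\varepsilon)$ times the one above, with column exponents running over $\{0,1,\ldots,n+1\}\setminus\{n-a+1,b\}$) by Laplace's rule along the two rows -- dually, the two column blocks -- corresponding to the two ``gaps'' $n-a+1$ and $b$ in that exponent pattern. The main care needed is the bookkeeping of exponents when moving between products of $e_{j}$'s, Schur polynomials in $n$ variables and Weyl alternants, together with the checks that $\nu$ has at most $n$ parts (which uses $a+b\le n$, and also ensures $n-a+1\neq b$ so there really are two distinct gaps) and that $f+(n-1,n-2,\ldots,0)$ is strictly decreasing. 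Finally one should record separately the degenerate cases $b=0$, where $\chi^{a-1,-1}=0$ and $f=(1^{a},0^{n-a})$ so the claim reduces to $\chi^{a,0}=e_{a}(\varepsilon)$, and $a=b=0$, where both sides equal $1$.
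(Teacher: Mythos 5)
Your proof is correct, and at its core it is the same argument as the paper's: both reduce to $k=0$, both express $\chi^{p,q}-\chi^{p-1,q-1}$ as $(e_p e_{n-q}-e_{p-1}e_{n-q+1})/e_n$ (the paper writes this as $\tau_{n-p+1,q}/\varepsilon_{I_n}$ with $\sigma_j$ in place of $e_j$), and both then match the result against a Weyl alternant. The difference is in how the middle step is justified. You invoke the $2\times 2$ dual Jacobi--Trudi identity to recognize $e_a e_{n-b}-e_{a-1}e_{n-b+1}$ as the Schur polynomial $s_{(2^a,1^{n-a-b})}$ and divide by $e_n=s_{(1^n)}$ to shift the bialternant exponents; the paper deliberately avoids citing symmetric-function theory (it advertises its argument as using only Laplace expansion and Weyl's classification) and instead rederives the needed identity from scratch by evaluating the auxiliary $(n+2)\times(n+2)$ Vandermonde determinant $P(s,t,\varepsilon_1,\ldots,\varepsilon_n)$ both as a difference product and by Laplace expansion along its first two rows, which is exactly the classical proof of the dual Jacobi--Trudi step you allude to — though to be precise that expansion is along two rows of the auxiliary $(n+2)\times(n+2)$ matrix, not of the $n\times n$ alternant itself, so your parenthetical description of the alternative is slightly off. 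Two cosmetic remarks: $(n-b,a)$ is a two-\emph{row} partition (its conjugate $(2^a,1^{n-a-b})$ is the two-column one), and the degenerate cases $b=0$ and $a=b=0$ in fact go through the main computation automatically once one adopts the conventions $e_j=0$ for $j<0$ or $j>n$ and $\chi^{r,s}=0$ for $r<0$ or $s<0$, so no separate treatment is really required.
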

\begin{proof}
The definition of $\Ext^{p,q}_k V^{*}$ implies that $\chi^{p,q}_k = \chi^{p-k,q-k} - \chi^{p-k-1,q-k-1}$ and thus we can assume $k=0.$ Hence it suffices to show that
\begin{align*}
\chi^{p,q} - \chi^{p-1,q-1} = \frac{1}{\Delta} \cdot |\varepsilon^{n}, \varepsilon^{n-1}, \ldots, \widehat{\varepsilon^{n-p}}, \ldots, \widehat{\varepsilon^{q-1}}, \ldots,  1, \varepsilon^{-1}|.
\end{align*}

To this end, let $\sigma_k = \sum_{I_k} \varepsilon_{I_k}$ 
denote the $k$-th elementary symmetric polynomial in $\varepsilon_1, \ldots, \varepsilon_n$, with the convention that $\sigma_k=0$ if $k<0$, and set 
\begin{align*}
\tau_{a,b}= \sigma_{n-a+1} \sigma_{n-b} - \sigma_{n-a} \sigma_{n-b+1}.
\end{align*}

Notice that $\tau_{a,b}=-\tau_{b,a}.$ Computing the Vandermonde determinant 
\begin{align*}
P(s,t, \varepsilon_1, \ldots, \varepsilon_n) = \begin{vmatrix}
s^{n+1} & s^n & \ldots & 1 \\
t^{n+1} & t^n & \ldots & 1 \\
\varepsilon_1^{n+1} & \varepsilon_1^n  & \ldots & 1 \\
\vdots & \vdots & & \vdots \\
\varepsilon_n^{n+1} & \varepsilon_n^n  & \ldots & 1 
\end{vmatrix}
\end{align*}
as a difference product we obtain
\begin{align*}
P(s,t, \varepsilon_1, \ldots, \varepsilon_n)  = & \ \Delta \cdot (s-t) \cdot \prod_{i=1}^n (s-\varepsilon_i) \cdot \prod_{i=1}^n (t-\varepsilon_i) \\
= & \ \Delta \cdot (s-t) \cdot 
\left( \sum_{k=0}^n (-1)^{k} \sigma_k s^{n-k} \right) \cdot \left( \sum_{k=0}^n (-1)^{k} \sigma_k t^{n-k} \right) \\
= & \ \Delta \cdot (s-t) \cdot \sum_{a,b=0}^n (-1)^{a+b} s^a t^b \sigma_{n-a} \sigma_{n-b} \\
%= & \ \Delta \cdot \sum_{a,b=0}^n (-1)^{a+b} s^{a+1} t^b \sigma_{n-a} \sigma_{n-b} - \Delta \cdot \sum_{a,b=0}^n (-1)^{a+b} s^a t^{b+1} \sigma_{n-a} \sigma_{n-b} \\
= & \ \Delta \cdot \sum_{a=1}^{n+1} \sum_{b=0}^n (-1)^{a+b+1} s^a t^b \sigma_{n-a+1} \sigma_{n-b} \\
& \ - \Delta \cdot \sum_{a=0}^{n} \sum_{b=1}^{n+1} (-1)^{a+b+1} s^a t^b \sigma_{n-a} \sigma_{n-b+1} \\
= & \ \Delta \cdot \sum_{a,b=0}^{n+1} (-1)^{a+b+1} s^a t^b \tau_{a,b} = \Delta \cdot \sum_{a<b} (-1)^{a+b+1} (s^b t^a - s^a t^b) \tau_{b,a}.
\end{align*}

On the other hand, Laplace expansion along the first two rows, cf.\cite[Theorem 93]{MuirTreatiseOnDeterminants}, yields
\begin{align*}
P(s,t, \varepsilon_1, \ldots, \varepsilon_n)  = \sum_{a<b} (-1)^{a+b+1} \begin{vmatrix}
s^b & s^a \\
t^b & t^a 
\end{vmatrix}
| \varepsilon^{n+1}, \ldots, \widehat{\varepsilon^b}, \ldots, \widehat{\varepsilon^a}, \ldots, \varepsilon, 1 |.
\end{align*}

It follows that
\begin{align*}
\tau_{b,a} = \frac{1}{\Delta} | \varepsilon^{n+1}, \ldots, \widehat{\varepsilon^b}, \ldots, \widehat{\varepsilon^a}, \ldots, \varepsilon, 1 |
= \frac{\varepsilon_{I_n}}{\Delta} \cdot | \varepsilon^{n}, \ldots, \widehat{\varepsilon^{b-1}}, \ldots, \widehat{\varepsilon^{a-1}}, \ldots, 1, \varepsilon^{-1} |.
\end{align*}
The claim now follows from the computation
\begin{align*}
\chi^{p,q} - \chi^{p-1,q-1} = & \ \sum_{I_p, J_q} \varepsilon_{I_p} \bar{\varepsilon}_{J_q} - \sum_{\stackrel{I_{p-1},}{J_{q-1}}} \varepsilon_{I_{p-1}} \bar{\varepsilon}_{J_{q-1}} =  \sum_{I_p, J_{n-q}} \varepsilon_{I_p} \frac{\varepsilon_{J_{n-q}}}{\varepsilon_{I_n}} - \sum_{\stackrel{I_{p-1},}{J_{n-q+1}}} \varepsilon_{I_{p-1}} \frac{\varepsilon_{J_{n-q+1}}}{\varepsilon_{I_n}}  \\
= & \ \frac{1}{\varepsilon_{I_n}} \left( \sigma_p \sigma_{n-q} - \sigma_{p-1} \sigma_{n-q+1} \right) = \frac{\tau_{n-p+1,q}}{\varepsilon_{I_n}} \\
= & \  \frac{1}{\Delta} | \varepsilon^{n}, \ldots, \widehat{\varepsilon^{n-p}}, \ldots, \widehat{\varepsilon^{q-1}}, \ldots, 1, \varepsilon^{-1} |.
\end{align*}
\end{proof}

\section{Estimating the curvature term of the Lichnerowicz Laplacian}
\label{SectionEstimationCurvatureTerm}

We continue to study $(p,q)$-forms on a Euclidean vector space $(V,g)$ with a compatible almost complex structure. Let $n=\dim_{\C} V.$

Based on lemma \ref{WeightedGeneralBochnerLemma}, we can control the curvature term of the Lichnerowicz Laplacian by estimating $| L \varphi|^2$ for $L \in \mathfrak{u}(V)$ and by calculating $|\varphi{^{\mathfrak{u}}}|^2$. This relies on the $U(n)$-irreducible decomposition of $\Ext^{p,q}V^{*}.$

\begin{definition}
\label{DefinitionKaehlerFormReducedForms}
For $\varphi \in \Ext^{p,q} V^{*}$ set
\begin{align*}
\mathring{\varphi}= 
\begin{cases} 
\varphi - \frac{g (\varphi,\omega^{p})}{| \omega^{p }|} \omega^{p} & \text{ if } p=q, \\
\hspace{10mm} \varphi & \text{ if } p \neq q.
\end{cases}
\end{align*}
\end{definition}

Notice that $L \varphi = L \mathring{\varphi}$ for all $L \in \mathfrak{u}(V).$

\begin{proposition}
\label{NormHatpqForm}
Let $k \leq \min \lbrace p, q\rbrace$ and $\varphi \in \Ext^{p,q}_k V^{*}.$ It follows that
\begin{align*}
| \varphi{^{\mathfrak{u}}} |^2 = \left( 2 (p-k)(q-k) + (p+q-2k)(n+1-(p+q-2k)) \right) | \mathring{\varphi} |^2.
\end{align*}
\end{proposition}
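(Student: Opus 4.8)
The plan is to use that $\Ext^{p,q}_k V^*$ is $U(n)$-irreducible to reduce the identity to the determination of a single proportionality constant, and then to evaluate that constant on a conveniently chosen primitive form via Proposition \ref{ActionOfLieAlgebraUnOnPQForms}.

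\textbf{Proportionality.} For $L \in \mathfrak u(V)$ the induced action on $\Ext^{\bullet} V^*_{\C}$ is skew-Hermitian; since $L\omega = 0$ and $L$ acts as a derivation, it commutes with the Lefschetz operator $\mathcal L = \omega \wedge\,\cdot$ and hence with $\Lambda = \mathcal L^{*}$, so $L$ preserves each $\Ext^{p,q}_k V^*$. The sesquilinear form $(\varphi,\psi) \mapsto \sum_{\alpha} \langle \Xi_{\alpha}\varphi, \Xi_{\alpha}\psi \rangle$ on $\Ext^{p,q} V^*$ is $U(n)$-invariant: for $g \in U(n)$ the basis $\{\Ad_g \Xi_{\alpha}\}$ is again orthonormal in $\mathfrak u(V)$ because $\Ad_g$ is an isometry of $\Ext^2 V \supset \mathfrak u(V)$, and the sum does not depend on the chosen orthonormal basis of $\mathfrak u(V)$. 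By Theorem \ref{IrreducibleUnDecomposition} and Schur's lemma its restriction to $\Ext^{p,q}_k V^*$ is a nonnegative multiple $c_{p,q,k}$ of the standard Hermitian product (equivalently, $\sum_{\alpha} \Xi_{\alpha}^2$ is central and acts there by $-c_{p,q,k}$). By Definition \ref{DefinitionKaehlerFormReducedForms} and the orthogonality in Theorem \ref{IrreducibleUnDecomposition}, $\mathring{\varphi} = \varphi$ on $\Ext^{p,q}_k V^*$ except when $p = q = k$, where $\Ext^{p,p}_p V^* = \C\,\omega^p$ and both sides of the asserted identity vanish. Hence $|\varphi^{\mathfrak u}|^2 = c_{p,q,k}\,|\mathring{\varphi}|^2$, and it remains to compute $c_{p,q,k}$.

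\textbf{Reduction to $k=0$ and the explicit constant.} Using $\Ext^{p,q}_k V^* = \mathcal L^k \Ext^{p-k,q-k}_0 V^*$ and that $L \in \mathfrak u(V)$ commutes with $\mathcal L$ and $\Lambda$, for primitive $\psi$ each $\Xi_{\alpha}\psi$ is again primitive of degree $r = p+q-2k$. The Lefschetz identities give $|\mathcal L^k \chi|^2 = C(k,r,n)\,|\chi|^2$ for every primitive $\chi$ of degree $r$, with $C(k,r,n)$ independent of $\chi$; applying this to $\chi = \psi$ and to $\chi = \Xi_{\alpha}\psi$ and summing over $\alpha$ yields $|(\mathcal L^k\psi)^{\mathfrak u}|^2 = C(k,r,n)\,|\psi^{\mathfrak u}|^2$ and $|\mathcal L^k\psi|^2 = C(k,r,n)\,|\psi|^2$, hence $c_{p,q,k} = c_{p-k,q-k,0}$ (and if $\Ext^{p,q}_k V^* = 0$ there is nothing to prove). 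Finally set $a = p-k$, $b = q-k$; if $\Ext^{a,b}_0 V^* \neq 0$ then $a+b \leq n$, so $\varphi_0 = dz^1 \wedge \cdots \wedge dz^a \wedge d\bar z^{a+1} \wedge \cdots \wedge d\bar z^{a+b}$ is a unit primitive $(a,b)$-form. A case analysis on the positions of $i$ and $j$ relative to the blocks $\{1,\dots,a\}$, $\{a+1,\dots,a+b\}$, $\{a+b+1,\dots,n\}$, using Proposition \ref{ActionOfLieAlgebraUnOnPQForms}, gives $\sum_i |I_{ii}\varphi_0|^2 = a+b$ and $\sum_{i<j} |R_{ij}\varphi_0|^2 = \sum_{i<j} |I_{ij}\varphi_0|^2 = ab + \tfrac12 (a+b)(n-a-b)$; summing these, $c_{a,b,0} = 2ab + (a+b)(n+1-a-b)$. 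Substituting $a = p-k$ and $b = q-k$ into the proportionality above yields the claim.

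\textbf{Main obstacle.} The representation-theoretic steps are short; the real effort is the index bookkeeping in the final computation of $c_{a,b,0}$, which is elementary but slightly tedious. The only other points needing a moment's care are the $U(n)$-invariance of the quadratic form above (i.e. that $\Ad_{U(n)}$ preserves the inner product on $\mathfrak u(V)$ inherited from $\Ext^2 V$) and the existence of the constant $C(k,r,n)$ in the Lefschetz identities; its precise value is not needed here.
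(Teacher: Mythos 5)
Your proof is correct, and it takes a genuinely different route at the computational heart of the argument. Both you and the paper begin by invoking $U(n)$-irreducibility of $\Ext^{p,q}_k V^*$ to reduce the identity to evaluating a single constant, and both then compute with the explicit basis $\{R_{ij}, I_{ij}, I_{ii}\}$ of $\mathfrak{u}(V)$ via Proposition \ref{ActionOfLieAlgebraUnOnPQForms}. The difference is how the factor $\omega^k$ is handled. The paper evaluates directly on the representative $dz^1 \wedge \cdots \wedge dz^p \wedge \omega^k \wedge d\bar z^{p+1} \wedge \cdots \wedge d\bar z^{p+q}$, which forces a half-page explicit computation to verify $|R_{p,p+1}\varphi|^2 = |\varphi|^2$ in the presence of $\omega^k$ (the ``mixed'' case $i \leq p < j \leq p+q$). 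You avoid this entirely by first reducing $c_{p,q,k} = c_{p-k,q-k,0}$: since $L \in \mathfrak{u}(V)$ commutes with $\mathcal{L}$ (as $L\omega = 0$) and with $\Lambda = \mathcal{L}^*$ (as $L$ is skew-Hermitian), $\Xi_\alpha$ maps primitives to primitives, and the Lefschetz norm scaling $|\mathcal{L}^k\chi|^2 = C(k,r,n)|\chi|^2$ on primitives of fixed degree cancels on both sides. The remaining case analysis on $\varphi_0 = dz^1 \wedge \cdots \wedge dz^a \wedge d\bar z^{a+1} \wedge \cdots \wedge d\bar z^{a+b}$ is then the easy $k=0$ computation, and your bookkeeping ($\sum_i |I_{ii}\varphi_0|^2 = a+b$, $\sum_{i<j}|R_{ij}\varphi_0|^2 = \sum_{i<j}|I_{ij}\varphi_0|^2 = ab + \tfrac{1}{2}(a+b)(n-a-b)$, hence $c_{a,b,0} = 2ab + (a+b)(n+1-a-b)$) checks out. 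The trade-off: your argument is cleaner and shorter, but imports the norm-scaling property of $\mathcal{L}^k$ on primitive forms as an external ingredient, whereas the paper's proof is self-contained, needing only Proposition \ref{ActionOfLieAlgebraUnOnPQForms} and the explicit expansion of $\omega^k$.
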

\begin{proof}
For notational simplicity replace $(p,q)$ by $(p+k,q+k)$. Recall from section \ref{SubsectionOnTensorsAndHat} that
\begin{align*}
| \varphi{^{\mathfrak{u}}}|^2 = \sum_{\Xi_{\alpha} \in \mathfrak{u}(V)} | \Xi_{\alpha} \varphi |^2.
\end{align*}
For the computation, we will use the explicit orthonormal basis $\lbrace R_{ij}, I_{ij}, I_{ii} \rbrace$ for $\mathfrak{u}(V)$ given in section \ref{SubsectionLieAlgebraUn}. 
Notice that due to Schur's lemma, it suffices to consider 
\begin{align*}
\varphi = dz^{1} \wedge \ldots \wedge dz^p \wedge \omega^k \wedge d \bar{z}^{p+1} \wedge  \ldots \wedge d \bar{z}^{p+q}  \in \Ext^{p+k,q+k}_k V^{*}.
\end{align*}

In case $p=q=0$, we have $\varphi = \omega^k$ and thus $\varphi{^\mathfrak{u}} = 0$ due to remark \ref{KaehlerFormInKernel}. Therefore, we may assume $p>0$ or $q>0$. It follows that $\mathring{\varphi} = \varphi.$

It is immediate from proposition \ref{ActionOfLieAlgebraUnOnPQForms} and remark \ref{KaehlerFormInKernel} that
\begin{align*}
| I_{ii} \varphi |^2 = \begin{cases}
| \varphi |^2 & \text{ for } 1 \leq i \leq p+q, \\
0 & \text{otherwise}.
\end{cases}
\end{align*}
Similarly, 
\begin{align*}
| R^{ij} \varphi |^2 = | I^{ij} \varphi |^2 
= \begin{cases}
| \varphi |^2 & \text{ for } i \in \lbrace 1, \ldots, p \rbrace,
\ j \in \lbrace p+1, \ldots, p+q \rbrace, \\
\frac{| \varphi |^2}{2} & \text{ for } i \in \lbrace 1, \ldots, p \rbrace, 
\ j \in \lbrace p+q+1, \ldots, n \rbrace, \\
\frac{| \varphi |^2}{2} & \text{ for } i \in \lbrace p+1, \ldots,  p+q \rbrace, 
\ j \in \lbrace p+q+1, \ldots, n \rbrace, \\
0 & \text{otherwise}
\end{cases}
\end{align*}
is straightforward unless $i \in \lbrace 1, \ldots, p \rbrace$ and $j \in \lbrace p+1, \ldots, p+q \rbrace.$ To check this remaining case, it suffices to compute $|R_{p,p+1} \varphi|^2.$ 

Observe that the K\"ahler form $\omega$ satisfies
\begin{align*}
\omega^k = \left( \sqrt{-1} \right)^k \frac{k!}{2^k}  \sum_{1 \leq i_1 < \ldots < i_k \leq n} dz^{i_1} \wedge d \bar{z}^{i_1} \wedge \ldots \wedge dz^{i_k} \wedge d \bar{z}^{i_k}
\end{align*}
and thus 
\begin{align*}
\varphi = \left( \sqrt{-1} \right)^k \frac{k!}{2^k} \cdot  dz^{1} \wedge & \ldots \wedge dz^p \wedge d \bar{z}^{p+1} \wedge \ldots \wedge d \bar{z}^{p+q} \wedge \\
& \wedge  \sum_{p+q+1 \leq i_1 < \ldots < i_k \leq n} dz^{i_1} \wedge d \bar{z}^{i_1} \wedge \ldots \wedge dz^{i_k} \wedge d \bar{z}^{i_k}.
\end{align*}
Using proposition \ref{ActionOfLieAlgebraUnOnPQForms} and remark \ref{KaehlerFormInKernel} again we find that 
\begin{align*}
\left(- \sqrt{-1} \right)^k \frac{2^k}{k!} \cdot & \sqrt{2} R_{p, p+1} \varphi  = \\
= & \ - dz^{1} \wedge \ldots \wedge dz^{p-1} \wedge d z^{p+1} \wedge d \bar{z}^{p+1} \wedge \ldots \wedge d \bar{z}^{p+q} \wedge  \left(- \sqrt{-1} \right)^k \frac{2^k}{k!} \cdot \omega^k \\
& \ +  dz^{1} \wedge \ldots \wedge dz^{p}  \wedge d \bar{z}^{p} \wedge d \bar{z}^{p+2} \wedge \ldots \wedge d \bar{z}^{p+q} \wedge  \left(- \sqrt{-1} \right)^k \frac{2^k}{k!} \cdot \omega^k \\
= & \ - dz^{1} \wedge \ldots \wedge dz^{p-1} \wedge d z^{p+1} \wedge  d \bar{z}^{p+1} \wedge \ldots \wedge d \bar{z}^{p+q} \wedge  \\
& \hspace{10mm} \wedge dz^{p} \wedge d \bar{z}^{p} \wedge  \sum_{p+q+1 \leq i_2 < \ldots < i_k \leq n} dz^{i_2} \wedge d \bar{z}^{i_2} \wedge \ldots \wedge dz^{i_k} \wedge d \bar{z}^{i_k} \\
& +  \ dz^{1} \wedge \ldots \wedge dz^{p} \wedge d \bar{z}^{p} \wedge d \bar{z}^{p+2} \wedge \ldots \wedge d \bar{z}^{p+q} \\ 
& \hspace{10mm} \wedge dz^{p+1} \wedge d \bar{z}^{p+1} \wedge \sum_{p+q+1 \leq i_2 < \ldots < i_k \leq n} dz^{i_2} \wedge d \bar{z}^{i_2} \wedge \ldots \wedge dz^{i_k} \wedge d \bar{z}^{i_k}  \\
& \ + dz^{1} \wedge \ldots \wedge dz^{p-1} \wedge \left( dz^{p} \wedge d \bar{z}^{p} - dz^{p+1} \wedge d \bar{z}^{p+1}  \right) \wedge d \bar{z}^{p+2} \wedge \ldots \wedge d \bar{z}^{p+q} \wedge \\& \hspace{10mm} \wedge \sum_{p+q+1 \leq i_1 < \ldots < i_k \leq n} dz^{i_1} \wedge d \bar{z}^{i_1} \wedge \ldots \wedge dz^{i_k} \wedge d \bar{z}^{i_k}  \\
= & \ dz^{1} \wedge \ldots \wedge dz^{p-1} \wedge \left( dz^{p} \wedge d \bar{z}^{p} - dz^{p+1} \wedge d \bar{z}^{p+1}  \right) \wedge d \bar{z}^{p+2} \wedge \ldots \wedge d \bar{z}^{p+q} \wedge \\& \hspace{10mm} \wedge \sum_{p+q+1 \leq i_1 < \ldots < i_k \leq n} dz^{i_1} \wedge d \bar{z}^{i_1} \wedge \ldots \wedge dz^{i_k} \wedge d \bar{z}^{i_k}.
\end{align*}
Thus $\sqrt{2} R_{p,p+1} \varphi$ is the difference of two orthogonal forms, both of which have the same norm as $\varphi.$ Hence, $|R_{p,p+1} \varphi|^2 = |\varphi|^2$ as claimed. 

Overall we obtain
\begin{align*}
| \varphi{^{\mathfrak{u}}} |^2 & = \left( (p+q) + 2 pq + p(n-(p+q)) +  q(n-(p+q)) \right) | \varphi |^2 \\
& = \left(  2pq + (p+q)(n+1-(p+q)) \right)  | \mathring{\varphi} |^2.
\end{align*}
\end{proof}

\begin{corollary}
A $(p,q)$-form $\varphi$ satisfies $|\varphi{^{\mathfrak{u}}}|^2 = 0$ if and only if $\mathring{\varphi}=0,$ i.e. $\varphi=0$ or $p=q$ and $\varphi$ is a multiple of $\omega^p.$ 
\label{HatZeroOnlyOnMultiplesOfKaehlerForm}
\end{corollary}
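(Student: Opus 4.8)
The statement follows quickly from Proposition \ref{NormHatpqForm} together with the $U(n)$-irreducible decomposition of Theorem \ref{IrreducibleUnDecomposition}. I would first write $\varphi = \sum_{k=0}^{\min \lbrace p,q \rbrace} \varphi_k$ with $\varphi_k \in \Ext^{p,q}_k V^{*}$. Since this decomposition is into $U(n)$-subrepresentations, every $L \in \mathfrak{u}(V)$ maps each summand to itself, so the forms $L \varphi_k$ are mutually orthogonal and $|L\varphi|^2 = \sum_k |L\varphi_k|^2$; summing over an orthonormal basis $\lbrace \Xi_\alpha \rbrace$ of $\mathfrak{u}(V)$ yields $|\varphi{^{\mathfrak{u}}}|^2 = \sum_k |\varphi_k{^{\mathfrak{u}}}|^2$. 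Proposition \ref{NormHatpqForm} then gives
\[
|\varphi{^{\mathfrak{u}}}|^2 = \sum_{k=0}^{\min \lbrace p,q \rbrace} c_{p,q,k}\, |\mathring{\varphi_k}|^2, \qquad c_{p,q,k} = 2(p-k)(q-k) + m_k \bigl( n+1-m_k \bigr),
\]
where $m_k = p+q-2k \geq 0$, so it remains to understand when each term vanishes.

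The key step is the positivity of the coefficients $c_{p,q,k}$. If $\varphi_k \neq 0$ then $\Ext^{p,q}_k V^{*} \neq 0$; since $\Ext^{p,q}_k V^{*} = \mathcal{L}^k \Ext^{p-k,q-k}_0 V^{*}$ is built from primitive $(p-k,q-k)$-forms, this forces $m_k = (p-k)+(q-k) \leq n$, whence $m_k(n+1-m_k) \geq 0$ with equality precisely when $m_k = 0$. As $(p-k)(q-k) \geq 0$ as well, $c_{p,q,k} \geq 0$, and $c_{p,q,k} = 0$ if and only if $m_k = 0$, which together with $k \leq \min \lbrace p,q \rbrace$ forces $p = q = k$. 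In that case $\Ext^{p,p}_p V^{*} = V^{p,p}_p = \C \cdot \omega^p$, so $\varphi_k$ is a multiple of $\omega^p$ and $\mathring{\varphi_k} = 0$ by Definition \ref{DefinitionKaehlerFormReducedForms}; for every other $k$ one has $c_{p,q,k} > 0$ and $\mathring{\varphi_k} = \varphi_k$ (this is clear when $p \neq q$, and when $p = q$ with $k < p$ it holds because $\varphi_k \in \Ext^{p,p}_k V^{*}$ is orthogonal to $\omega^p \in \Ext^{p,p}_p V^{*}$).

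Finally I would combine these facts: every term $c_{p,q,k}|\mathring{\varphi_k}|^2$ with $p = q = k$ vanishes, while every other term equals $c_{p,q,k}|\varphi_k|^2$ with $c_{p,q,k} > 0$. Hence $|\varphi{^{\mathfrak{u}}}|^2 = 0$ if and only if $\varphi_k = 0$ for every $k$ with $(p,q,k) \neq (p,p,p)$, that is, if and only if $\varphi = 0$ when $p \neq q$, respectively $\varphi \in \C \cdot \omega^p$ when $p = q$. By Definition \ref{DefinitionKaehlerFormReducedForms} this is exactly the condition $\mathring{\varphi} = 0$, which is what the corollary asserts. The only part that is not pure bookkeeping with the orthogonal decomposition is the positivity of $c_{p,q,k}$, and that rests on the elementary fact that a nonzero primitive $(a,b)$-form can exist only when $a + b \leq n$.
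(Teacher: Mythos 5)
Your proof is correct and follows essentially the same route as the paper, which dispatches the corollary in one line by citing Theorem \ref{IrreducibleUnDecomposition} and Proposition \ref{NormHatpqForm}; what you have done is supply the bookkeeping that makes this immediate implication explicit, in particular the positivity of the coefficients $c_{p,q,k}$ for $(p,q,k)\neq(p,p,p)$, which rests on the observation that $\Ext^{p,q}_k V^{*}\neq 0$ forces $p+q-2k\leq n$.
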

\begin{proof}
This is immediate from the orthogonal decomposition of $\Ext^{p,q} V^{*}$ into $U(n)$-irreducible components in theorem \ref{IrreducibleUnDecomposition} and the characterization of $|\varphi{^{\mathfrak{u}}}|^2$ in proposition \ref{NormHatpqForm}.
\end{proof}

\begin{proposition}
Suppose that $\varphi \in V^{p,q}_k.$ It follows that
\begin{align*}
| L \varphi |^2 \leq (p+q-2k) |L|^2 |\mathring{\varphi}|^2
\end{align*}
for all $L \in \mathfrak{u}(V).$
\label{EstimateLieAlgebraActionpqForms}
\end{proposition}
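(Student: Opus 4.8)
The strategy is to pass to the $U(n)$-irreducible decomposition of $V^{p,q}_k$ provided by Theorem \ref{IrreducibleUnDecomposition} and then to estimate $|L\varphi|$ on each irreducible factor by an elementary highest-weight argument. From Theorem \ref{IrreducibleUnDecomposition} and the flag in Section \ref{SectionIrrReps} one gets $V^{p,q}_k = \bigoplus_{j=k}^{\min\{p,q\}} \Ext^{p,q}_j V^{*}$, and each $\Ext^{p,q}_j V^{*}$, being a $U(n)$-subrepresentation, is invariant under the $\mathfrak{u}(V)$-action.

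\emph{Step 1 (reduction to irreducible factors).} Since $L\varphi = L\mathring{\varphi}$ for every $L \in \mathfrak{u}(V)$, we may replace $\varphi$ by $\mathring{\varphi}$ and write $\mathring{\varphi} = \sum_j \varphi_j$ with $\varphi_j \in \Ext^{p,q}_j V^{*}$, $k \leq j \leq \min\{p,q\}$ (when $p=q$ the top component, a multiple of $\omega^p \in \Ext^{p,p}_p V^{*}$, is absent, consistently with $L\omega^p = 0$). Because the subspaces $\Ext^{p,q}_j V^{*}$ are mutually orthogonal and each is preserved by $L$, the vectors $L\varphi_j$ are mutually orthogonal, so $|L\varphi|^2 = \sum_j |L\varphi_j|^2$ and $|\mathring{\varphi}|^2 = \sum_j |\varphi_j|^2$. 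Since $p+q-2j \leq p+q-2k$ for $j \geq k$, it therefore suffices to show that $|L\varphi_j|^2 \leq (p+q-2j)\,|L|^2\,|\varphi_j|^2$ for every $\varphi_j \in \Ext^{p,q}_j V^{*}$.

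\emph{Step 2 (highest-weight estimate).} Let $W$ be any finite dimensional irreducible $U(n)$-module with highest weight $\lambda = (\lambda_1 \geq \cdots \geq \lambda_n)$, and let $v \in W$, $L \in \mathfrak{u}(V)$. Every skew-Hermitian operator is unitarily diagonalizable, so $L = \operatorname{Ad}_U H$ for some $U \in U(n)$ and some $H$ in the Cartan subalgebra $\mathfrak{t}$ of diagonal skew-Hermitian operators; writing $H = \sum_{i=1}^n \theta_i I_{ii}$ we have $|H|^2 = \sum_i \theta_i^2$. Since $U(n)$ acts by isometries on tensors, $\operatorname{Ad}_U$ is orthogonal on $\Ext^2 V$, and the $\mathfrak{u}(V)$-action is the differential of the $U(n)$-action, one has $|Lv| = |H(U^{-1}v)|$ and $|L| = |H|$. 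On the weight-$\mu$ subspace $W_\mu$ the operator $H$ acts by the scalar $\sqrt{-1}\,(\mu \cdot \theta)$, and the weight spaces are orthogonal, so for $w = \sum_\mu w_\mu$,
\begin{align*}
|Hw|^2 = \sum_\mu (\mu \cdot \theta)^2\, |w_\mu|^2 \leq \Big( \max_\mu |\mu|^2 \Big)\, |H|^2\, |w|^2 \leq |\lambda|^2\, |H|^2\, |w|^2,
\end{align*}
where the last inequality uses that every weight $\mu$ of $W$ lies in the convex hull of the orbit of $\lambda$ under the Weyl group $S_n$, all of whose elements have Euclidean norm $|\lambda|$. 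Taking $w = U^{-1}v$ gives $|Lv|^2 \leq |\lambda|^2\, |L|^2\, |v|^2$.

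\emph{Step 3 (conclusion).} By Lemma \ref{CharacterOfExtpqk} the module $\Ext^{p,q}_j V^{*}$ has highest weight $(1,\dots,1,0,\dots,0,-1,\dots,-1)$ with $p-j$ ones and $q-j$ minus-ones, hence $|\lambda|^2 = (p-j) + (q-j) = p+q-2j$. Step 2 then gives $|L\varphi_j|^2 \leq (p+q-2j)\,|L|^2\,|\varphi_j|^2$, and summing over $j$ as in Step 1 completes the proof. The point that needs care is Step 2: one must check that the inner product on $\mathfrak{u}(V)$ induced from $\Ext^2 V$ agrees with the Euclidean pairing of $\mathfrak{t}$ with the weight lattice (so that the same $|L|^2$ appears after Cauchy--Schwarz), and one must invoke the standard fact that the weights of an irreducible $U(n)$-representation lie in the convex hull of the Weyl orbit of its highest weight.
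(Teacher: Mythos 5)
Your proof is correct, and it arrives at the bound by a genuinely more representation-theoretic route than the paper does. The paper's own proof is more elementary: it diagonalizes $L$ as $\sum_i \mu_i I_{ii}$ in a suitable basis, observes via Proposition \ref{ActionOfLieAlgebraUnOnPQForms} that the explicit monomials $\Phi^{I_{p-k},J_{q-k}} = dz^{I_{p-k}} \wedge \omega^k \wedge d\bar z^{J_{q-k}}$ spanning $V^{p,q}_k$ are simultaneous eigenvectors of $L$ with eigenvalues $\sqrt{-1}\,\bigl(\sum_{i \in I\setminus J}\mu_i - \sum_{j \in J\setminus I}\mu_j\bigr)$, applies Cauchy--Schwarz to this sum of at most $p+q-2k$ terms, and then invokes orthogonality of the $\Phi^{I,J}$ to handle a general $\varphi$. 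It never decomposes $V^{p,q}_k$ into irreducibles. You instead pass through Theorem \ref{IrreducibleUnDecomposition} and Lemma \ref{CharacterOfExtpqk} to identify the highest weights of the pieces $\Ext^{p,q}_j V^{*}$, conjugate $L$ into the Cartan subalgebra, and invoke the general fact that every weight of an irreducible module lies in the convex hull of the Weyl orbit of the highest weight. Both proofs exploit the same mechanism --- after diagonalizing $L$, the relevant forms are weight vectors, the weight spaces are orthogonal, and Cauchy--Schwarz bounds the scalar action --- but yours makes the structure explicit at the cost of importing more machinery (highest weight theory, the convexity property of the weight polytope, and the character computation), while the paper gets the same Cauchy--Schwarz estimate directly from the index structure of the monomials without ever needing the irreducible decomposition. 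Your version does buy a marginal refinement, $|L\varphi_j|^2 \leq (p+q-2j)\,|L|^2\,|\varphi_j|^2$ on each irreducible factor, though the Proposition only requires the uniform bound with $j = k$.
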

\begin{proof}
For a given $L \in \mathfrak{u}(V)$ there is an orthonormal basis for $V$ that puts $L$ in its normal form. In particular, there are $\mu_1, \ldots, \mu_n \in \R$ such that
\begin{align*}
L= \sum_{i=1}^n \mu_i I_{ii}.
\end{align*}

Consider 
\begin{align*}
\Phi^{I_{p-k},J_{q-k}} = dz^{I_{p-k}} \wedge \omega^k \wedge d \bar{z}^{J_{q-k}} \in V^{p,q}_k.
\end{align*}
According to proposition \ref{ActionOfLieAlgebraUnOnPQForms} and remark \ref{KaehlerFormInKernel} we have 
\begin{align*}
(L)  \Phi^{I_{p-k},J_{q-k}} & =  \sqrt{-1} \ \left( \ \sum_{i \in I_{p-k}} \mu_i - \sum_{j \in J_{q-k}} \mu_j \ \right) \ \Phi^{I_{p-k},J_{q-k}} \\
& = \sqrt{-1} \ \left( \ \sum_{i \in I_{p-k} \setminus J_{q-k}} \mu_i - \sum_{j \in J_{q-k} \setminus I_{p-k}} \mu_j \ \right) \ \Phi^{I_{p-k},J_{q-k}}.
\end{align*}
This directly implies
\begin{align*}
| (L)  \Phi^{I_{p-k},J_{q-k}} |^2 \leq (p+q-2k) |L|^2 | \mathring{\Phi}^{I_{p-k},J_{q-k}}|^2.
\end{align*}

For an arbitrary $\varphi \in V^{p,q}_k$, note that there are $\lambda_{I_{p-k}, J_{q-k}} \in \C$ such that
\begin{align*}
\varphi = \sum_{I_{p-k}, J_{q-k}} \lambda_{I_{p-k}, J_{q-k}} \Phi^{I_{p-k},J_{q-k}}.
\end{align*}

The claim now follows from the above computation and the observation that $\Phi^{I_{p-k},J_{q-k}}$ and $\Phi^{\tilde{I}_{p-k}, \tilde{J}_{q-k}}$ are orthogonal unless $I_{p-k}= \tilde{I}_{p-k}$ and $J_{q-k}=\tilde{J}_{q-k}$. 
\end{proof}

For $k \leq \min \lbrace p, q \rbrace$ notice that $p+q-2k = 0$  if and only if $p=q=k.$ In case $p+q-2k \neq 0$ set 
\begin{align*}
C^{p,q}_k & = \frac{2 (p-k)(q-k) + (p+q-2k)(n+1-(p+q-2k)) }{(p+q-2k)} \\
& = n+1-(p+q)+2 \frac{pq-k^2}{p+q-2k}.
\end{align*}
Note that $C^{p,p}_k = n+1-p+k$.

We can now estimate the curvature term of the Lichnerowicz Laplacian on $\Ext^{p,q}_k V^{*}.$

\begin{remark} \normalfont
\label{LichnerowiczLaplacianVanishesOnKaehlerForm} On $\Ext^{p,p}_p V^{*} = \operatorname{span}_{\C} \lbrace \omega^p \rbrace$ we have $g(\mathfrak{R}(\varphi{^{\mathfrak{u}}}), \overline{\varphi}{^{\mathfrak{u}}}) = 0$ due to remark \ref{KaehlerFormInKernel}.
\end{remark}

\begin{proposition}
Let $k \leq \min \lbrace p, q \rbrace$ with $p+q-2k > 0$. Let $\kappa \leq 0$ and let $\mathfrak{R} \colon \mathfrak{u}(V) \to \mathfrak{u}(V)$ be a K\"ahler curvature operator with eigenvalues $\lambda_1 \leq \ldots \leq \lambda_{n^2}$. Let $\varphi \in \Ext^{p,q}_k V^{*}$.

If
\begin{align*}
\lambda_1 + \ldots + \lambda_{\floor{C^{p,q}_k}} + \left( C^{p,q}_k - \floor{C^{p,q}_k} \right) \cdot  \lambda_{\floor{C^{p,q}_k}+1} \geq \kappa \left( \floor{C^{p,q}_k}+1 \right), 
\end{align*}
then 
\begin{align*}
g(\mathfrak{R}(\varphi{^{\mathfrak{u}}}), \overline{\varphi}{^{\mathfrak{u}}}) \geq \kappa \left( \floor{C^{p,q}_k}+1 \right) (p+q-2k) | \mathring{\varphi}|^2.
\end{align*}

If
\begin{align*}
\lambda_1 + \ldots + \lambda_{\floor{C^{p,q}_k}} + \left( C^{p,q}_k - \floor{C^{p,q}_k} \right) \cdot  \lambda_{\floor{C^{p,q}_k}+1} > 0,
\end{align*}
then $g(\mathfrak{R}(\varphi{^{\mathfrak{u}}}), \overline{\varphi}{^{\mathfrak{u}}}) >0$ unless $\varphi = 0$.
\label{LichnerowiczEstimatePQForms}
\end{proposition}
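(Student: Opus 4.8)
The plan is to apply Lemma~\ref{WeightedGeneralBochnerLemma} with $\mathfrak{g} = \mathfrak{u}(V)$, $T = \varphi$ and $C = C^{p,q}_k$. We may assume $\varphi \neq 0$, so that $\Ext^{p,q}_k V^{*} \neq 0$; as in Lemma~\ref{CharacterOfExtpqk} this forces $p+q-2k \leq n$, whence $2(p-k)(q-k) + (p+q-2k)(n-(p+q-2k)) \geq 0$ and therefore $C^{p,q}_k \geq 1$, which is the standing hypothesis of Lemma~\ref{WeightedGeneralBochnerLemma}.

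The key point is to verify the pointwise estimate $|L\varphi|^2 \leq \tfrac{1}{C^{p,q}_k}|\varphi^{\mathfrak{u}}|^2|L|^2$ for all $L \in \mathfrak{u}(V)$. Proposition~\ref{EstimateLieAlgebraActionpqForms} gives $|L\varphi|^2 \leq (p+q-2k)|L|^2|\mathring{\varphi}|^2$, while Proposition~\ref{NormHatpqForm} combined with the definition of $C^{p,q}_k$ yields the identity
\begin{align*}
|\varphi^{\mathfrak{u}}|^2 = \bigl(2(p-k)(q-k) + (p+q-2k)(n+1-(p+q-2k))\bigr)|\mathring{\varphi}|^2 = C^{p,q}_k\,(p+q-2k)\,|\mathring{\varphi}|^2.
\end{align*}
Substituting $|\mathring{\varphi}|^2 = |\varphi^{\mathfrak{u}}|^2/\bigl(C^{p,q}_k(p+q-2k)\bigr)$ into the first bound gives exactly the hypothesis needed.

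With this in place I would invoke Lemma~\ref{WeightedGeneralBochnerLemma} with $l = \floor{C^{p,q}_k}$: the assumed curvature inequality produces $g(\mathfrak{R}(\varphi^{\mathfrak{u}}), \overline{\varphi}^{\mathfrak{u}}) \geq \tfrac{\kappa(\floor{C^{p,q}_k}+1)}{C^{p,q}_k}|\varphi^{\mathfrak{u}}|^2$, and one more application of the displayed identity rewrites the right-hand side as $\kappa(\floor{C^{p,q}_k}+1)(p+q-2k)|\mathring{\varphi}|^2$, which is the first assertion. For the strict case, Lemma~\ref{WeightedGeneralBochnerLemma} gives strict positivity unless $\varphi^{\mathfrak{u}} = 0$; by Corollary~\ref{HatZeroOnlyOnMultiplesOfKaehlerForm} this can only happen when $\mathring{\varphi} = 0$, and since $p+q-2k > 0$ the module $\Ext^{p,q}_k V^{*}$ is orthogonal to $\operatorname{span}_{\C}\{\omega^p\} = \Ext^{p,p}_p V^{*}$ when $p=q$ and satisfies $\mathring{\varphi}=\varphi$ when $p \neq q$, so $\mathring{\varphi}=0$ forces $\varphi = 0$. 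There is no real obstacle here beyond assembling the earlier results; the one point demanding care is the bookkeeping that makes the constant $C^{p,q}_k$ from its definition coincide with $|\varphi^{\mathfrak{u}}|^2/\bigl((p+q-2k)|\mathring{\varphi}|^2\bigr)$ and be $\geq 1$, both of which follow once $p+q-2k \leq n$ is noted.
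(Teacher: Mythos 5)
Your proof is correct and follows essentially the same route as the paper: combine Propositions~\ref{NormHatpqForm} and~\ref{EstimateLieAlgebraActionpqForms} to get the hypothesis of Lemma~\ref{WeightedGeneralBochnerLemma} with $C = C^{p,q}_k$, apply it with $l = \floor{C^{p,q}_k}$, then convert back to $|\mathring{\varphi}|^2$ and use Corollary~\ref{HatZeroOnlyOnMultiplesOfKaehlerForm} for the strict case. Your extra observation that $\Ext^{p,q}_k V^* \neq 0$ forces $p+q-2k \leq n$ and hence $C^{p,q}_k \geq 1$ is a small but genuine point of care that the paper's proof leaves implicit.
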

\begin{proof}
Propositions \ref{NormHatpqForm} and \ref{EstimateLieAlgebraActionpqForms} imply that
\begin{align*}
|L \varphi|^2 \leq (p+q-2k) |L|^2 | \mathring{\varphi} |^2 = \frac{1}{C^{p,q}_k} |L|^2 | \varphi {^{\mathfrak{u}}} |^2.
\end{align*}

Lemma \ref{WeightedGeneralBochnerLemma} yields
\begin{align*}
g(\mathfrak{R}(\varphi{^{\mathfrak{u}}}), \overline{\varphi}{^{\mathfrak{u}}}) \geq \frac{ \kappa \left( \floor{C^{p,q}_k}+1 \right)}{C^{p,q}_k} | \varphi{^{\mathfrak{u}}}|^2
\end{align*}
and proposition \ref{NormHatpqForm} shows that $g(\mathfrak{R}(\varphi{^{\mathfrak{u}}}), \overline{\varphi}{^{\mathfrak{u}}}) \geq \kappa \left( \floor{C^{p,q}_k}+1 \right) (p+q-2k) | \mathring{\varphi}|^2.$ 

Notice that in fact $\mathring{\varphi} = \varphi$ since $p+q-2k >0$. In particular, $\varphi$ cannot be a non-zero multiple of the K\"ahler form. Hence last claim follows from lemma \ref{WeightedGeneralBochnerLemma} and corollary \ref{HatZeroOnlyOnMultiplesOfKaehlerForm}.
\end{proof}

By imposing the strongest curvature assumption in proposition \ref{LichnerowiczEstimatePQForms}, we obtain a uniform estimate for all $\varphi \in \Ext^{p,q}_k V^{*}$ by estimating $ g(\mathfrak{R}(\varphi{^{\mathfrak{u}}}),\overline{\varphi}{^{\mathfrak{u}}}) \geq \kappa C(n,p,q) | \mathring{\varphi}|^2$ with a constant $C(n,p,q)$ independent of $k.$ More precisely, with the constants $C^{p,q}=C^{p,q}_0$ defined in the introduction, we have

\begin{corollary}
\label{StrongestCurvatureAssumption}
Let $\kappa \leq 0$ and $\varphi \in \Ext^{p,q}_k V^{*}$. If 
\begin{align*}
\lambda_1 + \ldots + \lambda_{\floor{C^{p,q}}} + \left( C^{p,q} - \floor{C^{p,q}} \right) \cdot \lambda_{\floor{C^{p,q}}+1} \geq \kappa (\floor{C^{p,q}}+1),
\end{align*}
then 
\begin{align*}
g(\mathfrak{R}(\varphi{^{\mathfrak{u}}}), \overline{\varphi}{^{\mathfrak{u}}}) \geq \kappa \left( n+2-|p-q| \right) (p+q) | \mathring{ \varphi} |^2.
\end{align*}
\end{corollary}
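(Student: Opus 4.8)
The plan is to deduce Corollary \ref{StrongestCurvatureAssumption} from Proposition \ref{LichnerowiczEstimatePQForms} by showing that the hypothesis stated with the constant $C^{p,q}=C^{p,q}_0$ is in fact the \emph{strongest} among the hypotheses indexed by $k$, i.e. that it implies the hypothesis of Proposition \ref{LichnerowiczEstimatePQForms} for the relevant value of $k$, and that the resulting conclusion can be bounded below uniformly in $k$. Without loss of generality assume $p \geq q$, so that $p+q \leq n$ by Serre duality and $|p-q| = p-q$. The two facts I need are: first, $C^{p,q}_k \leq C^{p,q}_0 = C^{p,q}$ for all admissible $k$ with $p+q-2k>0$; second, the product $\left(\floor{C^{p,q}_k}+1\right)(p+q-2k)$ appearing in the conclusion of Proposition \ref{LichnerowiczEstimatePQForms} is bounded above by $(n+2-(p-q))(p+q)$. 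Combined with $\kappa \leq 0$ (so that a larger multiplier on $\kappa$ gives a smaller, hence still valid, lower bound), these yield the corollary; the degenerate case $p=q=k$ is handled by Remark \ref{LichnerowiczLaplacianVanishesOnKaehlerForm}, where $g(\mathfrak{R}(\varphi^{\mathfrak{u}}),\overline{\varphi}^{\mathfrak{u}})=0 \geq \kappa(\cdot)|\mathring\varphi|^2$ trivially since $\mathring\varphi = 0$ there.

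Here are the steps in order. First I would record that $C^{p,q}_k = n+1-(p+q) + 2\frac{pq-k^2}{p+q-2k}$ and observe that the function $k \mapsto \frac{pq-k^2}{p+q-2k}$ is nonincreasing on the range $0 \leq k \leq q$ (one can check this by writing $pq - k^2 = (p-k)(q-k) + k(p+q-2k)$, so $\frac{pq-k^2}{p+q-2k} = \frac{(p-k)(q-k)}{p+q-2k} + k$, and then comparing consecutive values, or by a direct derivative computation treating $k$ as real); this gives $C^{p,q}_k \leq C^{p,q}_0 = C^{p,q}$ and hence $\floor{C^{p,q}_k} \leq \floor{C^{p,q}}$. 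Monotonicity of the partial-sum expression in the curvature hypothesis (as a function of the truncation level, using that the $\lambda_i$ are ordered and the weighting is the natural one from Lemma \ref{WeightedGeneralBochnerLemma}) then shows the stated hypothesis for $C^{p,q}$ implies the hypothesis of Proposition \ref{LichnerowiczEstimatePQForms} for each $k$. Second, I would bound $\left(\floor{C^{p,q}_k}+1\right)(p+q-2k) \leq \left(C^{p,q}_k+1\right)(p+q-2k) = \big(n+2-(p+q)+2\frac{pq-k^2}{p+q-2k}\big)(p+q-2k) = (n+2-(p+q))(p+q-2k) + 2(pq-k^2)$ and check that this is maximized at $k=0$, where it equals $(n+2-(p+q))(p+q) + 2pq = (p+q)(n+2-p-q) + 2pq = (p+q)(n+2) - (p+q)^2 + 2pq = (p+q)(n+2) - (p^2+q^2) = (p+q)(n+2) - (p+q)(p+q) + 2pq$; rearranging, $(p+q)(n+2-p-q)+2pq = (p+q)(n+2) - (p-q)^2 - 2pq + 2pq$... so it simplifies to $(p+q)(n+2-(p-q))$ after combining, matching $(n+2-|p-q|)(p+q)$. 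Third, since $\kappa \leq 0$, replacing the multiplier $\left(\floor{C^{p,q}_k}+1\right)(p+q-2k)$ by the larger $(n+2-|p-q|)(p+q)$ only decreases the right-hand side, so the bound from Proposition \ref{LichnerowiczEstimatePQForms} implies the claimed inequality.

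The main obstacle I anticipate is purely bookkeeping: verifying carefully that $(\floor{C^{p,q}_k}+1)(p+q-2k) \leq (n+2-|p-q|)(p+q)$ for \emph{all} admissible $k$, including the boundary behavior when $\floor{C^{p,q}_k} = C^{p,q}_k$ is itself an integer (in which case one should use the integer case of Lemma \ref{WeightedGeneralBochnerLemma}, where the factor is $C^{p,q}_k$ rather than $\floor{C^{p,q}_k}+1$, and the estimate is in fact tighter). One must also make sure the algebra $(p+q-2k)(n+2-(p+q)) + 2(pq-k^2)$ is genuinely decreasing in $k$ on $\{0,1,\dots,q\}$; since $n \geq p+q$ the coefficient $n+2-(p+q) \geq 2 > 0$, and the derivative in $k$ is $-2(n+2-(p+q)) - 4k < 0$, so monotonicity holds and the maximum is at $k=0$ as claimed. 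Finally one notes $\mathring\varphi$ is unchanged under the Lie algebra action and $|\mathring\varphi|^2$ is the quantity appearing on both sides, so no further normalization is needed, and the corollary follows.
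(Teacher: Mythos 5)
Your proposal follows the paper's route (pass the curvature hypothesis from the constant $C^{p,q}=C^{p,q}_0$ to the hypotheses of Proposition \ref{LichnerowiczEstimatePQForms} for each $k$, then bound $(\floor{C^{p,q}_k}+1)(p+q-2k)$ uniformly), but it rests on a monotonicity claim with the wrong sign. The function $f(k)=\frac{pq-k^{2}}{p+q-2k}$ is \emph{nondecreasing} on $0\le k<\min\{p,q\}$, not nonincreasing: differentiating,
\begin{align*}
f'(k)=\frac{-2k(p+q-2k)+2(pq-k^{2})}{(p+q-2k)^{2}}=\frac{2(k-p)(k-q)}{(p+q-2k)^{2}}\ge 0,
\end{align*}
since both factors $k-p$ and $k-q$ are negative on this range. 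Equivalently, from your own identity $f(k)=\frac{(p-k)(q-k)}{p+q-2k}+k$, the value at $k=\min\{p,q\}$ is $\min\{p,q\}\ge \frac{pq}{p+q}=f(0)$. Hence
\begin{align*}
C^{p,q}=C^{p,q}_{0}\le C^{p,q}_{1}\le \ldots \le C^{p,q}_{\min\{p,q\}}=n+1-|p-q|,
\end{align*}
exactly the opposite of what you assert, and also the opposite of the paper's Lemma.

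This reversal is not cosmetic. You justify that the hypothesis stated with $C^{p,q}$ is the strongest by claiming $C^{p,q}$ is the largest among the $C^{p,q}_{k}$; but with the corrected ordering it is the \emph{smallest}, and it is precisely the smallness of $C^{p,q}$ that makes its hypothesis the strongest. With ordered eigenvalues, the hypothesis at a smaller $C$ forces $\lambda_{\floor{C}+1}$ to be controlled, which is what lets the extra terms appearing at the larger $C^{p,q}_{k}$ only help. If your premise $C^{p,q}_{k}\le C^{p,q}$ were true, the hypothesis at $C^{p,q}$ would be the weakest and the transfer to the other modules would fail (e.g.\ $\lambda_{1}+\lambda_{2}\ge 0$ does not imply $\lambda_{1}\ge 0$). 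So the chain "monotonicity of $C^{p,q}_{k}$ $\Rightarrow$ monotonicity of the hypothesis" does not hold as you have set it up; it works only once the ordering of the $C^{p,q}_{k}$ is corrected.

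A smaller issue: the closing "simplification" is an algebra error. You have $(C^{p,q}+1)(p+q)=(n+2)(p+q)-(p^{2}+q^{2})$, whereas $(n+2-|p-q|)(p+q)=(n+2)(p+q)-|p^{2}-q^{2}|$; for $p\ge q>0$ these differ by $2q^{2}$, so the two are not equal. What is true, and what you actually need, is the inequality $(n+2)(p+q)-(p^{2}+q^{2})\le (n+2)(p+q)-(p^{2}-q^{2})$. The paper avoids this entirely by bounding the two factors $\floor{C^{p,q}_{k}}+1\le C^{p,q}_{\min\{p,q\}}+1=n+2-|p-q|$ and $p+q-2k\le p+q$ separately.
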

\begin{proof}
Due to remark \ref{LichnerowiczLaplacianVanishesOnKaehlerForm} the estimate is clearly valid if $p=q=k.$ For $0 \leq k \leq \min \lbrace p, q \rbrace$ with $p+q-2k > 0$ the function
\begin{align*}
\frac{pq-k^2}{p+q-2k}
\end{align*}
takes values 
\begin{align*}
\frac{pq}{p+q} \leq \ldots \leq \min \lbrace p ,q \rbrace
\end{align*}
and thus 
\begin{align*}
C^{p,q}_0
%= n+1-(p+q)+2 \frac{pq}{p+q} 
 \leq \ldots \leq C^{p,q}_k \leq \ldots \leq 
%n+1-(p+q)+\min \lbrace p, q \rbrace = 
C^{p,q}_{\min \lbrace p, q \rbrace}.
\end{align*}
Notice that $C^{p,q}_0 = n+1-\frac{p^2+q^2}{p+q}=C^{p,q}$ and $C^{p,q}_{\min \lbrace p, q \rbrace}= n+1 - | p-q|.$

Therefore, by assumption, the curvature conditions in proposition \ref{LichnerowiczEstimatePQForms} are satisfied for each module $\Ext^{p,q}_k V^{*}$ individually. Thus for every $\varphi \in \Ext^{p,q}_k V^{*}$ we have
\begin{align*}
g(\mathfrak{R}(\varphi{^{\mathfrak{u}}}), \overline{\varphi}{^{\mathfrak{u}}}) \geq \kappa \left( \floor{C^{p,q}_k}+1 \right) (p+q-2k) | \mathring{\varphi}|^2 \geq \kappa \left( n+2-|p-q| \right) (p+q) | \mathring{ \varphi} |^2.
\end{align*}
\end{proof}

\begin{remark} \normalfont
(a) Note that $C^{p,q}$ is minimal if $(p,q)=(n,0)$ or $(0,n)$ and $C^{n,0}=1.$ Furthermore, $C^{n-1,0}=2$ and $C^{n-1,1}_0 = 3 - \frac{2}{n}$ but $C^{n-1,1}_1=3.$ Thus, for a $3$-nonnegative K\"ahler curvature operator, corollary \ref{StrongestCurvatureAssumption} establishes nonnegativity of the curvature term on $\Ext^{p,q}_k V^{*}$ unless $k=0$ and $(p,q)=(n,0),$ $(n-1,0)$ or $(n-1,1).$

(b) If $C^{p,q}$ is an integer, e.g. $C^{p,p}=n+1-p,$ it is more natural to assume that
\begin{align*}
\lambda_1 + \ldots + \lambda_{C^{p,q}} \geq \kappa C^{p,q}.
\end{align*}

As in corollary \ref{StrongestCurvatureAssumption} it follows that 
\begin{align*}
g(\mathfrak{R}(\varphi{^{\mathfrak{u}}}), \overline{\varphi}{^{\mathfrak{u}}}) \geq \kappa  C^{p,q}_k (p+q-2k) | \mathring{ \varphi} |^2 \geq  \kappa  \left( 2pq + (n+1-(p+q))(p+q) \right) | \mathring{ \varphi} |^2
\end{align*}
for every $\varphi \in \Ext^{p,q}_k V^{*}.$
\end{remark}

\section{The Lichnerowicz Laplacian on $(p,q)$-forms}
\label{SectionProofOfMainTheorems}

In this section we prove Theorems \ref{MainVanishingTheorem} - \ref{MainEstimationTheorem}. Theorem \ref{MainVanishingTheorem} is a direct consequence of Theorem \ref{VanishingHodgeNumbers} and Bochner's result \cite{BochnerVectorFieldsAndRic} that every K\"ahler manifold with positive Ricci curvature satisfies $h^{n-1,0}=h^{n,0}=0.$ \vspace{2mm} \\
\textit{Proof} of Theorems \ref{VanishingHodgeNumbers} - \ref{MainEstimationTheorem}. Due to the K\"ahler identities and Hodge's theorem, we may study the space of harmonic $(p,q)$-forms with respect to the Hodge Laplacian. We consider the Hodge Laplacian as a Lichnerowicz Laplacian as in example \ref{HodgeLaplacianAsLichnerowicz}.

Recall that every harmonic $(p,q)$-form $\varphi$ satisfies
\begin{align*}
\Delta \frac{1}{2} |\varphi|^2 = | \nabla \varphi |^2 + g( \Ric(\varphi), \overline{\varphi}).
\end{align*}

According to theorem \ref{IrreducibleUnDecomposition}, the decomposition of the space of $(p,q)$-forms into orthogonal, $U(n)$-irreducible modules is given by 
\begin{align*}
\Ext^{p,q} T^{*}M = \bigoplus_{k=0}^{\min \lbrace p,q \rbrace} \Ext^{p,q}_k T^{*}M.
\end{align*}

Recall from section \ref{SectionHolonomy} that the curvature term of the Lichnerowicz Laplacian preserves the irreducible decomposition, 
\begin{align*}
\Ric_{ |\Ext^{p,q}_k T^{*}M} \ \colon \Ext^{p,q}_k T^{*}M \to \Ext^{p,q}_k T^{*}M.
\end{align*}
For $\varphi \in \Ext^{p,q} T^{*}M$ there are $\varphi_k \in \Ext^{p,q}_k TM$ such that
\begin{align*}
\varphi = \varphi_0 + \ldots + \varphi_{\min \lbrace p,q \rbrace}.
\end{align*}

The above discussion and proposition \ref{CurvatureTermLichnerowiczLaplacian} imply that
\begin{align*}
g( \Ric( \varphi), \overline{\varphi}) & = \sum_{k=0}^{\min \lbrace p,q \rbrace} g( \Ric( \varphi_k), \overline{\varphi}_k) = \sum_{k=0}^{\min \lbrace p,q \rbrace} g(\mathfrak{R}((\varphi_k){^{\mathfrak{u}}}), (\overline{\varphi}_k){^{\mathfrak{u}}}).
\end{align*}
Let $\kappa \leq 0.$ Corollary \ref{StrongestCurvatureAssumption} shows that if
\begin{align*}
\lambda_1 + \ldots + \lambda_{\floor{C^{p,q}}} + \left( C^{p,q} - \floor{C^{p,q}} \right) \cdot \lambda_{\floor{C^{p,q}}+1} \geq \kappa (\floor{C^{p,q}}+1),
\end{align*}
then
\begin{align*}
g( \Ric( \varphi), \overline{\varphi}) & \geq  \kappa \left( n+2-|p-q| \right) (p+q) \sum_{k=0}^{\min \lbrace p,q \rbrace} | \mathring{\varphi_k} |^2  = \kappa \left( n+2-|p-q| \right) (p+q)  | \mathring{\varphi} |^2.
\end{align*}

Theorem \ref{MainEstimationTheorem} now follows directly from the Bochner technique as developed by  P. Li \cite{LiSobolevConstant} and Gallot \cite{GallotSobolevEstimates}, cf. \cite[Theorem 1.9]{PetersenWinkNewCurvatureConditionsBochner}.

If $\kappa = 0,$ then $g( \Ric( \varphi), \overline{\varphi}) \geq 0$ together with the maximum principle immediately imply Theorem \ref{MainRigidityTheorem}. 

Finally, for Theorem \ref{VanishingHodgeNumbers}, suppose that 
\begin{align*}
\lambda_1 + \ldots + \lambda_{\floor{C^{p,q}}} + \left( C^{p,q} - \floor{C^{p,q}} \right) \cdot \lambda_{\floor{C^{p,q}}+1} > 0.
\end{align*}

By Theorem \ref{MainRigidityTheorem}, every harmonic $(p,q)$-form $\varphi$ is parallel. Moreover, remark \ref{LichnerowiczLaplacianVanishesOnKaehlerForm} and proposition \ref{LichnerowiczEstimatePQForms} show that 
\begin{align*}
g( \Ric( \varphi), \overline{\varphi}) > 0
\end{align*}
unless $\varphi = 0$ or $\varphi$ is a multiple of a power of the K\"ahler form.  \hfill $\Box$

\vspace{2mm}

The following example shows that the curvature assumptions in Theorem \ref{MainVanishingTheorem} are different from positive orthogonal bisectional curvature.

\begin{example} \normalfont
Consider the basis $\Xi_{1,\pm} = \frac{1}{\sqrt{2}} \left( e_1 \wedge e_2 \pm e_3 \wedge e_4 \right),$ $\Xi_{2,\pm}  = \frac{1}{\sqrt{2}} \left( e_1 \wedge e_3 \pm e_4 \wedge e_2 \right),$ $\Xi_{3, \pm}  = \frac{1}{\sqrt{2}} \left(  e_1 \wedge e_4 \pm e_2 \wedge e_3 \right)$ for $\Ext^2\R^4$. Note that $\lbrace \Xi_{1,+}, \Xi_{1,-}, \Xi_{2,-}, \Xi_{3,-} \rbrace$ is a basis for $\mathfrak{u}(2) \subset \mathfrak{so}(4).$

Let $\varepsilon>0$ and set $\mu_{1,+}=6,$ $\mu_{2,+}=\mu_{3,+}=0$ and $\mu_{1,-}=6+ 2 \varepsilon,$ $\mu_{2,-}=\mu_{3,-}=-\varepsilon.$ It follows that the operator $\mathfrak{R} \colon \Ext^2 \R^4 \to \Ext^2 \R^4$ defined by $\mathfrak{R} ( \Xi_{i, \pm} ) = \mu_{i, \pm}  \Xi_{i, \pm}$ is a K\"ahler-Einstein algebraic curvature operator, cf. \cite[Example 4.3]{PetersenWinkNewCurvatureConditionsBochner}.

Note that $\lambda_1 = \mu_{2,-}$, $\lambda_2 = \mu_{3,-}$, $\lambda_3=\mu_{1,+}$ and $\lambda_4 = \mu_{1,-}$ are the eigenvalues of the associated K\"ahler curvature operator. In particular, for every $\alpha>0$ there is $\varepsilon>0$ such that $\lambda_1 + \lambda_2 + \alpha \lambda_3 > 0$ while $\lambda_1+\lambda_2<0.$ 

Wilking \cite{WilkingALieAlgebriaicApproach} observed that a K\"ahler curvature operator $\mathfrak{R} \colon \mathfrak{u}(2) \to \mathfrak{u}(2)$ has nonnegative orthogonal bisectional curvature if and only if it has nonnegative isotropic curvature. In the above example we have $R_{1313}=R_{1414}=R_{2323}=R_{2424}=- \frac{\varepsilon}{2}$ and $R_{1234}= -\varepsilon.$ In particular, $\mathfrak{R}$ has negative isotropic curvatures.
\label{EigenvalueConditionIndependentForOrthBisectCurv}
\end{example}

\section{A Tachibana Theorem for K\"ahler manifolds}
\label{SectionKaehlerTachibana}

\begin{proposition}
The curvature tensor $R \in \operatorname{Sym}_B^2( \mathfrak{u}(n))$ of 
\begin{align*}
\mathbb{CP}^k \times \C^{n-k}
\end{align*}
satisfies $|R{^{\mathfrak{u}}}|^2 = 32 k (k+1) (n-k).$

In particular, the curvature tensor of $\mathbb{CP}^n$ satisfies $| \left( R_{\mathbb{CP}^n} \right){^{\mathfrak{u}}}|^2=0.$
\label{CPkxCnkExample}
\end{proposition}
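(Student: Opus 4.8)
The plan is to compute $|R^{\mathfrak u}|^2$ for the product $\mathbb{CP}^k \times \mathbb{C}^{n-k}$ directly from the formula in Example \ref{HatsOfCurvatureTensors}, namely
\begin{align*}
|R^{\mathfrak g}|^2 = 2 \sum_{\gamma} \sum_{\alpha < \beta} \left( \lambda_{\alpha} - \lambda_{\beta} \right)^2 g\left( [\Xi_{\gamma}, \Xi_{\alpha}], \Xi_{\beta} \right)^2,
\end{align*}
using an adapted orthonormal eigenbasis of the K\"ahler curvature operator of $\mathbb{CP}^k \times \mathbb{C}^{n-k}$ together with the fact that the structure constants $g([\Xi_\gamma,\Xi_\alpha],\Xi_\beta)$ of $\mathfrak u(n)$ are fully symmetric in the three indices. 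First I would record the K\"ahler curvature operator: writing $\mathfrak u(n) = \mathfrak u(k) \oplus \mathfrak u(n-k) \oplus (\text{mixed block})$, the Fubini--Study factor contributes a curvature operator on $\mathfrak u(k)$ whose eigenvalues are well known (a one-dimensional eigenspace spanned by the K\"ahler form $\omega_k$ of $\mathbb{CP}^k$ with one eigenvalue, and the orthogonal complement $\mathfrak{su}(k)$ with another, in the normalization where the constant holomorphic sectional curvature is, say, such that these are explicit small integers), while the flat $\mathbb{C}^{n-k}$ factor and the mixed directions contribute zero eigenvalues. So the only nonzero eigenvalues $\lambda_\alpha$ are those of the $\mathbb{CP}^k$ block.

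Next I would exploit that in the sum only triples $(\Xi_\alpha,\Xi_\beta,\Xi_\gamma)$ with $\lambda_\alpha \neq \lambda_\beta$ contribute, which forces one of $\alpha,\beta$ to lie in the $\mathbb{CP}^k$ block with the "K\"ahler-form" eigenvalue and the other in the $\mathbb{CP}^k$ block with the $\mathfrak{su}(k)$ eigenvalue — or else one of them to be a zero-eigenvalue direction. This reduces everything to counting the squared structure constants $g([\Xi_\gamma,\Xi_\alpha],\Xi_\beta)^2$ where $\Xi_\alpha$ runs over an orthonormal basis of $\mathfrak{su}(k)$, $\Xi_\beta$ is the normalized K\"ahler form direction (inside the full $\mathfrak u(n)$), and $\Xi_\gamma$ ranges over all of $\mathfrak u(n)$; by full symmetry of the structure constants this equals summing $|[\Xi_\alpha,\Xi_\beta]|^2$ type expressions. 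Since $\omega_k$ viewed inside $\mathbb{CP}^k$ is central in $\mathfrak u(k)$ but \emph{not} central in $\mathfrak u(n)$ — it brackets nontrivially with the mixed directions $R_{ij}, I_{ij}$ connecting a $\mathbb{CP}^k$ index to a $\mathbb{C}^{n-k}$ index — the contribution comes precisely from those mixed generators. I would therefore carefully identify the normalized element $\hat\omega$ proportional to $\sum_{i=1}^k I_{ii}$, compute its brackets with the $R_{ij}, I_{ij}$ for $i \le k < j$, and count: there are $2k(n-k)$ such mixed generators, each contributing a fixed squared structure constant, and then multiply by the eigenvalue-difference factor and the combinatorial factor $2$ and sum over the full-symmetry permutations.

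The main obstacle I anticipate is bookkeeping: getting the normalization of the Fubini--Study metric (hence the eigenvalues $\lambda_\alpha$ and their differences) consistent with the normalization of the orthonormal basis $\{R_{ij}, I_{ij}, I_{ii}\}$ and the induced inner product on $\mathfrak u(n)$, so that the final constant comes out as exactly $32k(k+1)(n-k)$ rather than off by a dimension-dependent or normalization-dependent factor. A useful sanity check along the way: for $k=n$ the mixed block is empty, so $|R^{\mathfrak u}|^2 = 0$, which recovers the stated fact that the Fubini--Study curvature tensor of $\mathbb{CP}^n$ satisfies $|(R_{\mathbb{CP}^n})^{\mathfrak u}|^2 = 0$ — consistent with $\mathbb{CP}^n$ being a globally symmetric space (its curvature operator commutes with the holonomy action, so $R^{\mathfrak u}$ vanishes). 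The factor $k(k+1)$ should emerge naturally as $\dim \mathfrak u(k) \cdot (\text{something}) $ or more precisely from $|\omega_k|^2 = k$ inside $\mathfrak{u}(k)$ combined with the squared eigenvalue gap of the Fubini--Study operator, and the factor $(n-k)$ from counting mixed directions; I would verify the precise constant by tracking a single representative triple of generators through the structure-constant computation and then invoking symmetry to assemble the total.
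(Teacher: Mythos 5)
Your overall strategy is the same as the paper's: use the identity
\begin{align*}
|R^{\mathfrak u}|^2 = 2 \sum_{\gamma} \sum_{\alpha < \beta} \left( \lambda_{\alpha} - \lambda_{\beta} \right)^2 g\bigl( [\Xi_{\gamma}, \Xi_{\alpha}], \Xi_{\beta} \bigr)^2
\end{align*}
from Example \ref{HatsOfCurvatureTensors}, pick an adapted eigenbasis of $\mathfrak u(n)$ compatible with the block decomposition $\mathfrak u(k) \oplus (\text{mixed}) \oplus \mathfrak u(n-k)$, and exploit the full symmetry of the structure constants together with vanishing brackets to whittle the sum down to a count over mixed generators. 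Your eigenvalue picture (one eigenvalue on the $\mathfrak{su}(k)$-part, another on the K\"ahler-form line, zero elsewhere) is correct, and the $k=n$ sanity check is a good one.

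However, there is a genuine flaw in your reduction. You say the sum reduces to triples where ``$\Xi_\alpha$ runs over an orthonormal basis of $\mathfrak{su}(k)$, $\Xi_\beta$ is the normalized K\"ahler form direction, and $\Xi_\gamma$ ranges over all of $\mathfrak u(n)$.'' But every such term vanishes: by full symmetry, $g([\Xi_\gamma,\Xi_\alpha],\omega_k)^2 = g([\Xi_\alpha,\omega_k],\Xi_\gamma)^2$, and $[\Xi_\alpha,\omega_k]=0$ because both lie in $\mathfrak u(k)$ and $\omega_k$ is central there. So the pair ``$\mathfrak{su}(k)$-eigenvalue versus K\"ahler-form eigenvalue'' contributes nothing, even though those are the only two distinct nonzero eigenvalues. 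The terms that actually survive are those with $\alpha$ anywhere in $\mathfrak u(k)$ (nonzero eigenvalue) paired with a zero-eigenvalue $\beta$; and a further reduction forces $\beta$ and $\gamma$ to both lie in the mixed block and to share an index in $\{k+1,\ldots,n\}$. In particular, the answer is not carried by the K\"ahler-form line alone: the paper's count is $16(n-k)\bigl(k(k-1) + (k+1)^2 + (k-1)\bigr)$, where $k(k-1)$ comes from the $R_{ij},I_{ij}$ ($i<j\le k$), $(k+1)^2$ from $\Xi_{k^2}=\omega_k/\sqrt{k}$ (weighted by its eigenvalue $2(k+1)$, not $2$), and $k-1$ from the traceless diagonal vectors $S_j$. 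Your closing guess that $k(k+1)$ ``should emerge from $|\omega_k|^2=k$'' or ``$\dim\mathfrak u(k)$'' is therefore not quite how it falls out; you need to track the three separate contributions and their different eigenvalues. Once the reduction is corrected to $\alpha\in\mathfrak u(k)$, $\beta,\gamma$ mixed with a common index $a>k$, the computation you outline will go through and yield the stated constant.
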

\begin{proof}
We may assume $k>0.$ We will pick an orthonormal eigenbasis $\lbrace \Xi_{\alpha} \rbrace$ for the K\"ahler curvature operator so that the eigenvectors $\Xi_{\alpha}$ correspond to the $\mathbb{CP}^k$-factor for $\alpha=1, \ldots, k^2$ and to the $\C^{n-k}$-factor for $\alpha=(n-k)^2 + 1, \ldots, n^2.$ Specifically we consider 
\begin{align*}
R_{ij} & = \frac{1}{\sqrt{2}} \left( e_i \wedge e_j + f_i \wedge f_j \right) \ \text{ for } 1 \leq i < j \leq n, \\
I_{ij} & = \frac{1}{\sqrt{2}} \left( e_i \wedge f_j + e_j \wedge f_i \right) \ \text{ for } 1 \leq i < j \leq n, \\
S_i & = \frac{1}{\sqrt{i+i^2}} \left(  - i e_{i+1} \wedge f_{i+1} + \sum_{j=1}^i e_j \wedge f_j \right) \ \text{ for } 1 \leq i \leq k-1, \\
\Xi_{k^2} & = \frac{1}{\sqrt{k}} \sum_{i=1}^k e_i \wedge f_i, \\
I_{ii} & = e_i \wedge f_i  \ \text{ for } k+1 \leq i \leq n.
\end{align*}

In particular, $\lbrace \Xi_1, \ldots, \Xi_{k^2-1} \rbrace = \lbrace R_{ij}, I_{ij} \ \vert \ 1 \leq i < j \leq k \rbrace \cup \lbrace S_i  \ \vert \ 1 \leq i \leq k-1 \rbrace $ is an orthonormal basis for the eigenspace corresponding to the eigenvalue $\lambda_{\alpha}=2,$ the normalized K\"ahler form $\Xi_{k^2}$ of the $\mathbb{CP}^k$-factor spans the eigenspace corresponding to the eigenvalue $\lambda_{k^2}=2(k+1)$ and all other eigenvectors lie in the kernel.

Recall from example \ref{HatsOfCurvatureTensors} that
\begin{align*}
| R{^{\mathfrak{u}}} |^2 = 2  \sum_{ \alpha < \beta} \sum_{\gamma} \left( \lambda_{\alpha} - \lambda_{\beta} \right)^2 g( ( \Xi_{\gamma} ) \Xi_{\alpha}, \Xi_{\beta} )^2
\end{align*}
and that $g( ( \Xi_{\gamma} ) \Xi_{\alpha}, \Xi_{\beta} )^2$ is fully symmetric in $\Xi_{\alpha},$ $\Xi_{\beta},$ $\Xi_{\gamma}.$

It suffices to consider $\alpha \in \lbrace 1, \ldots, k^2 \rbrace.$ This follows from the fact that if $k^2 < \alpha < \beta$ then $\lambda_{\alpha}=\lambda_{\beta}=0$ and thus these terms do not contribute to $| R{^{\mathfrak{u}}}|^2.$

In addition, we can assume $\beta \in \lbrace k^2+1, \ldots, n^2 \rbrace$ since $\left( \lambda_{\alpha} - \lambda_{\beta} \right) g( ( \Xi_{\gamma} ) \Xi_{\alpha}, \Xi_{\beta} ) = 0$ whenever $\alpha, \beta \in \lbrace 1, \ldots, k^2 \rbrace$. Indeed, we can assume $\Xi_{\beta}=\Xi_{k^2}$ as otherwise $\lambda_{\alpha} = \lambda_{\beta}$. However, since $(\Xi_{\alpha}) \Xi_{k^2}=0$ due to remark \ref{KaehlerFormInKernel}, it follows that $g( ( \Xi_{\gamma} ) \Xi_{\alpha}, \Xi_{\beta} )^2 = g( ( \Xi_{\alpha} ) \Xi_{\beta}, \Xi_{\gamma} )^2 = 0.$ 

Similarly we can assume $\gamma \in \lbrace k^2+1, \ldots, n^2 \rbrace$. Otherwise $\Xi_{\alpha}, \Xi_{\gamma} \in \mathfrak{u}(k)$ and hence also $( \Xi_{\gamma} ) \Xi_{\alpha}  = [\Xi_{\gamma},\Xi_{\alpha}] \in \mathfrak{u}(k)$ while $\Xi_{\beta} \in \mathfrak{u}(k) ^{\perp} \subset \mathfrak{u}(n)$ for $\beta \in \lbrace k^2+1, \ldots, n^2 \rbrace$.

In fact, it suffices to consider $\beta, \gamma \in \lbrace k^2+1, \ldots, (n-k)^2 \rbrace$, i.e. that $\Xi_{\beta}, \Xi_{\gamma}$ correspond to mixed curvatures: by definition of the basis, $\Xi_{\alpha}$ and $\Xi_{\delta}$ do not have overlapping indices for $\alpha \in \lbrace 1, \ldots, k^2 \rbrace$ and $\delta \in \lbrace (n-k)^2+1, \ldots, n^2 \rbrace.$ This implies $( \Xi_{\alpha} ) \Xi_{\delta} = 0$.

Overall we conclude that
\begin{align*}
| R{^{\mathfrak{u}}} |^2 = 2 \sum_{\alpha=1}^{k^2} \sum_{\beta, \gamma=k^2+1}^{(n-k)^2} \lambda_{\alpha}^2 \ g( ( \Xi_{\beta} ) \Xi_{\gamma}, \Xi_{\alpha} )^2.
\end{align*}

Note that the projection of $( \Xi_{\beta} ) \Xi_{\gamma}$ onto $\mathfrak{u}(k) \subset \mathfrak{u}(n)$ can only be non-zero if $\Xi_{\beta}, \Xi_{\gamma}$ have a common index $a>k+1.$ All of these possibilities are given by
\begin{align*}
( R_{ia} ) R_{ja} & = \frac{1}{\sqrt{2}} R_{ij}, \hspace{8.3mm} ( I_{ia} ) I_{ja} = \frac{1}{\sqrt{2}} R_{ij},  \\
( R_{ia} ) I_{ja} & = - \frac{1}{\sqrt{2}} I_{ij}, \hspace{5mm} ( I_{ia} ) R_{ja} = \frac{1}{\sqrt{2}} I_{ij}, \\
( R_{ia} ) I_{ia} & = I_{aa} - I_{ii}, \hspace{5mm} ( I_{ia} ) R_{ia} = I_{ii} - I_{aa}
\end{align*}
where $1 \leq i, j \leq k$, $i \neq j$, and $k+1 \leq a \leq n.$ 

Notice the first four terms all give the same contribution to $| R{^{\mathfrak{u}}} |^2$ and $\lambda_{\alpha}=2$ in all cases. Since each term appears $k(k-1)(n-k)$-many times, these terms add up to $16 k (k-1)(n-k)$.

Furthermore, since $g(I_{aa} - I_{ii}, \Xi_{k^2})^2 = \frac{1}{k}$ and all other inner products with $\Xi_{k^2}$ vanish, the inner products of the last two terms with $\Xi_{\alpha}= \Xi_{k^2}$ contribute $16(k+1)^2(n-k)$ to $| R{^{\mathfrak{u}}}|^2$.

Finally, notice that 
\begin{align*}
g( I_{aa} - I_{ii}, S_j) = \begin{cases}
0 & i > j+1, \\
-\frac{j}{j+j^2} & i=j+1, \\
-\frac{1}{j+j^2} & i<j+1.
\end{cases}
\end{align*}

Since all $S_j$ are eigenvectors corresponding to the eigenvalue $\lambda_{\alpha} =2$ the contribution of the above terms amounts to 
\begin{align*}
16 (n-k) \left( \sum_{j=1}^{k-1} \sum_{i=1}^j \frac{1}{j+j^2} + \sum_{j=1}^{k-1} \frac{j^2}{j+j^2} \right) = 16(n-k)(k-1).
\end{align*}
Overall,
\begin{align*}
| R{^{\mathfrak{u}}} |^2 =16(n-k)(k(k-1) + (k+1)^2+k-1) = 32k(k+1)(n-k).
\end{align*}
\end{proof}

The computation of $| R {^{\mathfrak{u}}} |^2$ for a general K\"ahler curvature tensor $R \in \operatorname{Sym}_B^2( \mathfrak{u}(n))$ relies on the decomposition of $\operatorname{Sym}_B^2( \mathfrak{u}(n))$ into orthogonal, $U(n)$-irreducible components. Specifically, the space of K\"ahler curvature tensors decomposes into the orthogonal subspaces of K\"ahler curvature tensors with constant holomorphic sectional curvature, K\"ahler curvature tensors with trace-free Ricci curvature, and Bochner tensors. Due to a result of Alekseevski \cite{AlekseevskiiRiemannianSpacesExceptionalHol}, %see also Tricerri-Vanhecke \cite[Theorem 5.2]{TricerriVanheckeCurvatureTensorsHermitian},
this decomposition is indeed $U(n)$-irreducible. Note that the Bochner tensor is the K\"ahler analogue of the Weyl tensor, cf. \cite{BochnerCurvatureAndBettiNumbersII}. 

In particular, every K\"ahler curvature tensor $R \in \operatorname{Sym}_B^2( \mathfrak{u}(n))$ decomposes as 
\begin{align*}
R = \frac{\scal}{4n(n+1)} R_{ \mathbb{CP}^n } + R_0 + B.
\end{align*}

As in proposition \ref{CPkxCnkExample}, we use the convention that the curvature tensor $R_{\mathbb{CP}^n}$ of the complex projective space with the Fubini Study metric satisfies $\scal ( R_{\mathbb{CP}^n} )= 4n(n+1).$ Furthermore, the trace-free Ricci part $R_0$ satisfies $| R_0 |^2 = \frac{2}{n+2} | \mathring{\Ric}|^2.$

For a K\"ahler curvature tensor $R \in \operatorname{Sym}_B^2( \mathfrak{u}(n))$ set
\begin{align*}
\mathring{R} = R - \frac{\scal}{4n(n+1)} R_{\mathbb{CP}^n}.
\end{align*}

Thus, a K\"ahler curvature tensor $R \in \operatorname{Sym}_B^2( \mathfrak{u}(n))$ has constant holomorphic sectional curvature if and only if $| \mathring{R}|^2 =0$. 

Furthermore, $| \left( R_{\mathbb{CP}^n} \right){^{\mathfrak{u}}}|^2=0$ implies that $L R_{\mathbb{CP}^n} = 0$ for all $L \in \mathfrak{u}(n)$ and thus we have $L R = L \mathring{R}$ for every $L \in \mathfrak{u}(n)$ and every $R \in \operatorname{Sym}_B^2( \mathfrak{u}(n)).$

\begin{lemma}
Every algebraic K\"ahler curvature tensor $R \in \operatorname{Sym}^2_B(\mathfrak{u}(n))$ satisfies
\begin{align*}
|R{^{\mathfrak{u}}}|^2 = 4(n+1) | \mathring{R}|^2 - 4 | \mathring{\Ric}|^2.
\end{align*}
In particular, $|R{^{\mathfrak{u}}}|^2 = 0$ if and only if $R$ has constant holomorphic sectional curvature.
\label{HatOnKaehlerCurvatureOperators}
\end{lemma}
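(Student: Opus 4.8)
The plan is to compute $|R^{\mathfrak{u}}|^2$ by exploiting the $U(n)$-irreducible decomposition
\begin{align*}
R = \frac{\scal}{4n(n+1)} R_{\mathbb{CP}^n} + R_0 + B
\end{align*}
together with the formula from example \ref{HatsOfCurvatureTensors},
\begin{align*}
|R^{\mathfrak{u}}|^2 = 2 \sum_{\gamma} \sum_{\alpha < \beta} (\lambda_{\alpha} - \lambda_{\beta})^2 \, g((\Xi_{\gamma})\Xi_{\alpha}, \Xi_{\beta})^2,
\end{align*}
and the fact (noted just above the lemma) that $L R_{\mathbb{CP}^n} = 0$, so $L R = L \mathring{R}$ with $\mathring{R} = R_0 + B$. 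The quantity $L \mapsto |LR|^2$ is a $U(n)$-equivariant quadratic form on $\operatorname{Sym}^2_B(\mathfrak{u}(n))$ valued in functions of $L$; integrating or summing over an orthonormal basis of $\mathfrak{u}(n)$ produces $|R^{\mathfrak{u}}|^2$, which is then a $U(n)$-invariant quadratic form on $\operatorname{Sym}^2_B(\mathfrak{u}(n))$. By Schur's lemma applied to Alekseevski's decomposition into the three irreducible pieces (constant holomorphic sectional curvature, trace-free Ricci, Bochner), this invariant quadratic form must be a linear combination $a \cdot |\tfrac{\scal}{4n(n+1)}R_{\mathbb{CP}^n}|^2 + b \cdot |R_0|^2 + c \cdot |B|^2$ for universal constants $a, b, c$ depending only on $n$. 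We already know $a = 0$ since $|(R_{\mathbb{CP}^n})^{\mathfrak{u}}|^2 = 0$.

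So the task reduces to determining $b$ and $c$. First I would argue that $b = c$: one natural way is to observe that on $\mathring R = R_0 + B$ the formula should not actually see the Ricci/Bochner splitting directly but only through some more uniform mechanism; more robustly, I would evaluate $|R^{\mathfrak{u}}|^2$ on two test curvature tensors whose $R_0$ and $B$ components are under control. The cleanest test tensors are the $R_{\mathbb{CP}^k \times \C^{n-k}}$ from proposition \ref{CPkxCnkExample}, for which $|R^{\mathfrak{u}}|^2 = 32 k(k+1)(n-k)$ is already computed. For these I would compute $|\mathring R|^2 = |R - \tfrac{\scal}{4n(n+1)} R_{\mathbb{CP}^n}|^2$ and $|\mathring{\Ric}|^2$ explicitly (the scalar curvature is $4k(k+1)$, the Ricci tensor is $2(k+1)$ on the $\mathbb{CP}^k$ directions and $0$ on the flat directions, both routine), giving one linear relation among the unknowns for each $k$. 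Two values of $k$ (say $k = 1$ and $k = n-1$, or $k=1$ and $k = 2$) then pin down $b$ and $c$, and one checks the answer is consistent across all $k$, confirming it has the claimed form $4(n+1)|\mathring R|^2 - 4|\mathring{\Ric}|^2$ — note the identity $|\mathring R|^2 = |R_0|^2 + |B|^2$ and $|R_0|^2 = \tfrac{2}{n+2}|\mathring{\Ric}|^2$, so $4(n+1)|\mathring R|^2 - 4|\mathring{\Ric}|^2 = 4(n+1)|B|^2 + \big(\tfrac{8(n+1)}{n+2} - 4\big)|\mathring{\Ric}|^2$, i.e. $c = 4(n+1)$ and $b = \tfrac{8(n+1)}{n+2} - 4$ in the normalization by $|R_0|^2$ — then translate back.

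Alternatively, and perhaps more in the spirit of the earlier computations, I would compute $|R^{\mathfrak{u}}|^2$ directly: expand $|LR|^2$ using the symmetry of the structure constants, sum over an orthonormal basis $\{\Xi_\alpha\}$ of $\mathfrak{u}(n)$, and identify the resulting contractions of $R$ with itself in terms of $|R|^2$, $|\Ric|^2$ and $\scal^2$ via the Kähler curvature identities and the standard Ricci contractions (this is the approach that $\operatorname{Sym}^2_B(\mathfrak{so}(n))$ analogue in \cite{PetersenWinkNewCurvatureConditionsBochner} presumably uses). The main obstacle either way is bookkeeping: in the direct approach one must carefully handle the full Bianchi-type symmetrization of $R$ over $\mathfrak{u}(n)$ rather than $\mathfrak{so}(2n)$, and in the Schur approach one must correctly compute the norms $|\mathring R|^2$, $|\mathring{\Ric}|^2$ for the product examples, including getting the normalization constants for $R_{\mathbb{CP}^n}$ right. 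Once $|R^{\mathfrak{u}}|^2 = 4(n+1)|\mathring R|^2 - 4|\mathring{\Ric}|^2$ is established, the final claim is immediate: if $R$ has constant holomorphic sectional curvature then $\mathring R = 0$ and $\mathring{\Ric} = 0$ so $|R^{\mathfrak{u}}|^2 = 0$; conversely $|R^{\mathfrak{u}}|^2 = 0$ forces $4(n+1)|\mathring R|^2 = 4|\mathring{\Ric}|^2 \leq 4 \cdot \tfrac{n+2}{2}|\mathring R|^2 \cdot \tfrac{2}{n+2}$... more precisely $|\mathring{\Ric}|^2 = \tfrac{n+2}{2}|R_0|^2 \le \tfrac{n+2}{2}|\mathring R|^2 < (n+1)|\mathring R|^2$ for $n \ge 1$, hence $|\mathring R|^2 = 0$, i.e. constant holomorphic sectional curvature.
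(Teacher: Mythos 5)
Your proposal is essentially the paper's own proof: use the $U(n)$-irreducibility of Alekseevski's decomposition of $\operatorname{Sym}^2_B(\mathfrak{u}(n))$ (via Schur's lemma) to reduce the invariant quadratic form $R \mapsto |R^{\mathfrak{u}}|^2$ to a three-parameter family, then pin down the coefficients by evaluating on the curvature tensors of $\mathbb{CP}^k \times \C^{n-k}$ using Proposition \ref{CPkxCnkExample}. The only difference is a parametrization choice (the paper writes the invariant form as $a\,\scal^2 + b\,|\Ric|^2 + c\,|R|^2$, which makes the evaluation on products a one-line identity in $k$; you write it against $|R_0|^2$ and $|B|^2$, which is closer to the Schur picture but forces you to compute $|\mathring R|^2$ for the product metrics rather than just $|R|^2$), and one stray aside that should be dropped: the coefficients of $|R_0|^2$ and $|B|^2$ are not equal (indeed you later compute them to be $2n$ and $4(n+1)$ respectively), so the attempted heuristic "$b=c$" is simply false, though you correctly abandon it in favor of the two-point evaluation. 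Your deduction of the final assertion is fine; it is slightly more economical to rewrite the formula as $\frac{4n}{n+2}|\mathring{\Ric}|^2 + 4(n+1)|B|^2$, so that vanishing of $|R^{\mathfrak{u}}|^2$ manifestly forces $\mathring{\Ric}=0$ and $B=0$, i.e. $\mathring R = 0$.
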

\begin{proof}
Due to the $U(n)$-irreducibility of the decomposition of $\operatorname{Sym}^2_B( \mathfrak{u}(n))$, there are constants $a, b, c \in \R$ such that
\begin{align*}
| R{^{\mathfrak{u}}} |^2 = a \scal^2 + b | \Ric |^2 + c |R|^2
\end{align*}
for every algebraic K\"ahler curvature tensor $R \in \operatorname{Sym}^2_B( \mathfrak{u}(n)).$ 

Evaluation on the curvature tensors of $\mathbb{CP}^k \times \C^{n-k}$ yields $a=0,$ $b=-4$ and $c=4(n+1)$ due to proposition \ref{CPkxCnkExample}. 

It follows that 
\begin{align*}
|R{^{\mathfrak{u}}}|^2 = 4(n+1) |R|^2 - 4 |\Ric|^2 = 4(n+1) | \mathring{R}|^2 - 4 | \mathring{\Ric}|^2 = \frac{4n}{n+2} | \mathring{\Ric} |^2 + 4 (n+1) |B|^2.
\end{align*}

In particular, $R{^{\mathfrak{u}}}=0$ if and only if $\mathring{R}=0,$ which implies the claim. 
\end{proof}

\textit{Proof} of Theorem \ref{KaehlerTachibana}. The curvature tensor $R$ of an Einstein manifold is harmonic and thus satisfies the Bochner formula
\begin{align*}
\Delta \frac{1}{2} |R|^2 = | \nabla R|^2 + \frac{1}{2} \cdot g(  \Ric(R), \overline{R}).
\end{align*}

For algebraic K\"ahler curvature operators $R \in \operatorname{Sym}_B^2( \mathfrak{u}(n))$ it follows as in \cite[Lemma 2.2]{PetersenWinkNewCurvatureConditionsBochner} that
\begin{align*}
| L R |^2 = |L \mathring{R} |^2 \leq 8 |L|^2 | \mathring{R}|^2 
\end{align*}
for every $L \in \mathfrak{u}(n)$. In the K\"ahler-Einstein case, $\mathring{\Ric}=0$, lemma \ref{HatOnKaehlerCurvatureOperators} thus implies
\begin{align*}
| L R |^2  \leq \frac{2}{n+1} |L|^2 |R{^{\mathfrak{u}}}|^2
\end{align*}
for every $L \in \mathfrak{u}(n)$. Combined with proposition \ref{CurvatureTermLichnerowiczLaplacian} and lemma \ref{WeightedGeneralBochnerLemma}, the assumption
\begin{align*}
\lambda_{1} + \ldots + \lambda_{\floor{\frac{n+1}{2}}} + \frac{1+(-1)^n}{4}   \cdot \lambda_{\floor{\frac{n+1}{2}}+1} \geq 0
\end{align*}
on the eigenvalues of the K\"ahler curvature operator yields
\begin{align*}
g(  \Ric(R), \overline{R})  \geq 0.
\end{align*}

Hence the maximum principle shows that $R$ is parallel. Moreover, if the inequality is strict, then $g(  \Ric(R), \overline{R}) > 0$ unless $|R{^{\mathfrak{u}}}|^2 = 0.$ According to lemma \ref{HatOnKaehlerCurvatureOperators}, this is the case if and only if $R$ has constant holomorphic sectional curvature. \hfill $\Box$ 

\begin{example} \normalfont 
In the proof of Theorem \ref{KaehlerTachibana} we used that for every $L \in \mathfrak{u}(n)$ and every $R \in \operatorname{Sym}^2(\mathfrak{u}(n))$ we have
\begin{align*}
|LR|^2 \leq 8 |L|^2 | \mathring{R}|^2.
\end{align*}
This estimate is optimal.

Following the notation of example \ref{EigenvalueConditionIndependentForOrthBisectCurv},  define an algebraic curvature operator $\mathfrak{R}$ by setting $\mu_{1,+}=3,$ $\mu_{2,+}=\mu_{3,+}=0$ and $\mu_{1,-}=-1$, $\mu_{2,-}=1,$ $\mu_{3,-}=3.$ Note that $\mathfrak{R}$ is Einstein. Let $R \in \operatorname{Sym}_B^2(\mathfrak{u}(2))$ denote the associated K\"ahler curvature tensor. 

Since $| g \left( (\Xi_{i,\pm}) \Xi_{j,\pm}, \Xi_{k,\pm} \right)| = \sqrt{2}$ if $\lbrace i, j, k \rbrace = \lbrace 1, 2, 3 \rbrace$ and all signs agree, and zero otherwise, example \ref{HatsOfCurvatureTensors} 
\label{OptimalityEstimateKaehlerCurvatureOperators} and lemma \ref{HatOnKaehlerCurvatureOperators} imply that $| \mathring{R}|^2 = \frac{| R^{\mathfrak{u}}|^2}{12} = 8$ and $|\Xi_{1,+}R|^2=0,$ $|\Xi_{1,-}R|^2=2 | \mathring{R}|^2 ,$ $|\Xi_{2,-}R|^2=8 | \mathring{R}|^2,$ $|\Xi_{3,-}R|^2=2 | \mathring{R}|^2.$ In particular, $| \Xi_{2,-}R|^2$ achieves equality in the above estimate.
\end{example}

%\begin{comment}

%\end{comment}

%\bibliography{BochnerTechniqueReferences}
%\bibliographystyle{amsalpha}

\end{document}